\titleformat{\section}{\normalfont\fillast\scshape}{\thesection.}{.5em}{}
\titleformat{\subsection}[runin]{\normalfont\bfseries}{\thesubsection.}{.5em}{}[.]
\setlist[description]{%
	font={\normalfont\itshape},
	}
\renewcommand\labelenumi{(\textit{\roman{enumi}})}
\renewcommand\theenumi\labelenumi
\newtheorem{thm}{Theorem}[section]
\newtheorem{pro}[thm]{Proposition}
\newtheorem{cor}[thm]{Corollary}
\newtheorem*{thm*}{Theorem}
\newtheorem*{lem*}{Lemma}
\newtheorem*{pro*}{Proposition}
\newtheorem*{cor*}{Corollary}
\newtheorem{cla*}{Claim}
\theoremstyle{definition}
\newtheorem{dfn}[thm]{Definition}
\newtheorem*{dfn*}{Definition}
\theoremstyle{remark}
\newtheorem{rmk}[thm]{Remark}
\newtheorem{exa}[thm]{Example}
\newtheorem*{rmk*}{Remark}
\newtheorem*{rmks*}{Remarks}
\newtheorem*{exa*}{Example}
\DeclareMathOperator{\Hom}{Hom}
\DeclareMathOperator{\id}{id}
\DeclareMathOperator{\JW}{JW}
\newcommand{\cohdeg}[1]{\scriptstyle\textcolor{green}{#1}}
\newcommand{\Adg}{\mathscr{B}^{\mathrm{dg}}_W}
\newcommand{\Ao}{A^\bullet}
\newcommand{\Bo}{B^\bullet}
\newcommand{\C}{\mathcal{C}}
\newcommand{\Cb}{\mathcal{C}^{\mathrm{b}}}
\newcommand{\CC}{\mathscr{C}}
\newcommand{\Cdg}{\mathcal{C}_{\mathrm{dg}}}
\newcommand{\Cdgb}{\mathcal{C}_{\mathrm{dg}}^{\mathrm{b}}}
\newcommand{\hh}{\mathfrak{h}}
\newcommand{\HH}{\mathbf{H}}
\newcommand{\ZZ}{\mathbb{Z}}
\newcommand{\D}{\mathscr{H}}
\newcommand{\DBS}{\D_{\mathrm{BS}}}
\newcommand{\Kb}{\mathcal{K}^{\mathrm{b}}}
\newcommand{\Gb}{\mathcal{G}^{\mathrm{b}}}
\newcommand{\Homb}{\Hom^\bullet}
\newcommand{\un}{\mathbbm{1}}
\newcommand{\boe}{\mathbf{e}}
\newcommand{\bof}{\mathbf{f}}
\newcommand{\boi}{\mathbf{i}}
\newcommand{\uw}{\underline{w}}
\newcommand{\uz}{\underline{z}}
\newcommand{\uv}{\underline{v}}
\newcommand{\ux}{\underline{x}}
\newcommand{\uom}{\underline{\omega}}
\newcommand{\kk}{\Bbbk}
\newcommand{\qn}[2]{[#1]_{#2}}
\newcommand{\Roudg}[1]{F_{#1}^\bullet}
\newcommand{\Laur}{\ZZ[v,v^{-1}]}
\newcommand{\cword}{\mathfrak{W}_S}
\newcommand{\bword}{\mathfrak{W}_{\Sigma}}
\newcommand{\gr}[1]{\textcolor{green}{#1}}
\newcommand{\re}[1]{\textcolor{red}{#1}}
\newcommand{\bl}[1]{\textcolor{blue}{#1}}
\newcommand{\grad}[1]{{#1}^{\mathrm{gr}}}
\newcommand{\Tate}[1]{\langle #1 \rangle}
\newcommand{\writesummand}[1]{%
	\def\tempstring{#1}%
	\StrSubstitute{\tempstring}{s}{B_s}[\tempstring]%
	\StrSubstitute{\tempstring}{t}{B_t}[\tempstring]%
	\StrSubstitute{\tempstring}{u}{B_u}[\tempstring]%
	\StrSubstitute{\tempstring}{v}{B_v}[\tempstring]%
	\StrSubstitute{\tempstring}{O}{\Omega^+}[\tempstring]%
	\StrSubstitute{\tempstring}{o}{\Omega^-}[\tempstring]%
	\StrSubstitute{\tempstring}{l}{(1)}[\tempstring]%
	\StrSubstitute{\tempstring}{2}{(2)}[\tempstring]%
	\StrSubstitute{\tempstring}{9}{(-1)}[\tempstring]%
	\StrSubstitute{\tempstring}{8}{(-2)}[\tempstring]%
	\StrSubstitute{\tempstring}{n}{R}[\tempstring]%
	\tempstring%
	}
\newcommand{\LL}{\mathbb{L}}
\newcommand{\summandnode}[4]{%
		\node (#4) at (#2,#3) {$\writesummand{#1}$}%
		}
\definecolor{green}{rgb}{0,.5,0}
\tikzset{%
	virtual/.style  = {font=\scriptsize,draw=black,circle,dashed,inner sep=2pt},
	real/.style     = {circle,inner sep=2pt,font=\scriptsize},
	patch/.style    = {gray,pattern=north west lines, pattern color=gray},
	positive/.style = {line width=2pt},
	negative/.style = {line width=.25pt,double distance=1.5pt},
	braid/.style = {thick,double distance=2pt,rounded corners},
	localized/.style = {decorate,decoration={snake,amplitude=0.5mm,segment length=2mm}},
}
\tikzset{%
	pics/dot/.style n args={1}{%
		code={%
			\draw (0,0) -- (0,#1);
			\fill (0,#1) circle (1.5pt); 		
			}
	}
}
\def\d{.5cm}\def\h{1cm}
\colorlet{s}{red}
\colorlet{t}{blue}
\colorlet{u}{green}
\colorlet{v}{yellow}
\colorlet{g}{violet}
\colorlet{dgdegree}{green!40!black}
\tikzset{%
	pics/topdot/.style n args={3}{%
		code= {%
			\StrLen{#1}[\length]%
			\def \d{0.35}%
			\def \h{1}%
			\node[anchor=east,inner sep=0pt] at ({-(\length+1)*\d/2},0) {$#3$};%
			\foreach \i in {1,...,\length}{%
				\StrChar{#1}{\i}[\color]%
				\ifthenelse{\equal{\color}{O}}{%
					\draw[->] ({-(\length+1)*\d/2+\i*\d},-\h/2)--({-(\length+1)*\d/2+\i*\d},{1/15*\h/2});%
					\draw ({-(\length+1)*\d/2+\i*\d},{1/15*\h/2})--({-(\length+1)*\d/2+\i*\d},\h/2);%
				}%
				{%
					\ifthenelse{\equal{\color}{o}}{%
						\draw ({-(\length+1)*\d/2+\i*\d},-\h/2)--({-(\length+1)*\d/2+\i*\d},{-1/15*\h/2});%
						\draw[<-] ({-(\length+1)*\d/2+\i*\d},{-1/15*\h/2})--({-(\length+1)*\d/2+\i*\d},\h/2);%
					}%
					{%
						\ifthenelse{\i=#2}{%
							\draw[draw=\color] ({-(\length+1)*\d/2+\i*\d},-\h/2)--({-(\length+1)*\d/2+\i*\d},0);%
							\fill[fill=\color] ({-(\length+1)*\d/2+\i*\d},0) circle (2pt);%
						}%
						{%
							\draw[draw=\color] ({-(\length+1)*\d/2+\i*\d},-\h/2)--({-(\length+1)*\d/2+\i*\d},\h/2);%
						}%
					}%
				}%
			}%
		}%
	}
}
\tikzset{%
	pics/bottomdot/.style n args={3}{%
		code= {%
			\StrLen{#1}[\length]%
			\def \d{0.35}%
			\def \h{1}%
			\ifthenelse{\equal{\length}{1}}{%
					\node[anchor=east,inner sep=0pt] at ({-(\length)*\d/2},{.2*\h}) {$#3$};%
				}%
				{%
					\node[anchor=east,inner sep=0pt] at ({-(\length+1)*\d/2},0) {$#3$};%
				}
			\foreach \i in {1,...,\length}{%
				\StrChar{#1}{\i}[\color]%
				\ifthenelse{\equal{\color}{O}}{%
					\draw[->] ({-(\length+1)*\d/2+\i*\d},-\h/2)--({-(\length+1)*\d/2+\i*\d},{1/15*\h/2});%
					\draw ({-(\length+1)*\d/2+\i*\d},{1/15*\h/2})--({-(\length+1)*\d/2+\i*\d},\h/2);%
				}%
				{%
					\ifthenelse{\equal{\color}{o}}{%
						\draw ({-(\length+1)*\d/2+\i*\d},-\h/2)--({-(\length+1)*\d/2+\i*\d},{-1/15*\h/2});%
						\draw[<-] ({-(\length+1)*\d/2+\i*\d},{-1/15*\h/2})--({-(\length+1)*\d/2+\i*\d},\h/2);%
					}%
					{%
						\ifthenelse{\i=#2}{%
							\draw[draw=\color] ({-(\length+1)*\d/2+\i*\d},\h/2)--({-(\length+1)*\d/2+\i*\d},0);%
							\fill[fill=\color] ({-(\length+1)*\d/2+\i*\d},0) circle (2pt);%
						}%
						{%
							\draw[draw=\color] ({-(\length+1)*\d/2+\i*\d},-\h/2)--({-(\length+1)*\d/2+\i*\d},\h/2);%
						}%
					}%
				}%
			}%
		}%
	}
}
\tikzset{%
	pics/patch/.style n args={1}{
		code={
			\node[scale=#1,circle,draw,fill=white,inner sep=0,font=\tiny] at (0,0) {$\times$};
			}
	}
}
\tikzset{%
	twoframe/.pic={%
		\draw[positive] (.5,0)--(2.5,0);\draw[negative] (.5,2)--(2.5,2);
	}
}
\tikzset{%
	threeframe/.pic={%
		\draw[positive] (.5,0)--(3.5,0);\draw[negative] (.5,2)--(3.5,2);
	}
}
\title{\bfseries Soergel Calculus with patches}
\author{Leonardo Maltoni}
\begin{document}
	\maketitle
	\begin{abstract}
		We adapt the diagrammatic presentation of the Hecke category to the dg category formed by the
		standard representatives for the Rouquier complexes. We use this description to recover basic results
		about these complexes, namely the categorification of the relations of the braid group and the Rouquier formula.
	\end{abstract}
		\section*{Introduction}				
			The diagrammatic Hecke category is an extremely powerful tool.
			It was introduced as a presentation by generators and relations
			of the category of Soergel bimodules by Elias and Williamson \cite{EW}, 
			based on previous work by 
			Elias-Khovanov \cite{EliKho} and Elias \cite{Elias_two}.
			One reduces to the subcategory formed by certain \emph{Bott-Samelson objects},
			which can be represented as sequences of simple reflections. Morphisms 
			between these objects are described as linear combinations of diagrams identified under some relations.
			This presentation has been useful in several occurrences: see for instance Remark \ref{rmk_usesofdiag}.
						
			On the other hand, Rouquier \cite{Rou_cat} introduced some complexes of Soergel bimodules
			to categorify actions of the braid group on categories. These \emph{Rouquier complexes}
			are all possible tensor products between certain \emph{standard} and \emph{costandard} complexes
			$F_s$ and $F_s^{-1}$ (for each simple reflection $s$). These objects satisfy a categorified 
			version	of the relations of the braid group: see Proposition \ref{pro_braidRou} below.
			This allows us to unambiguously consider the object $F_\omega$ associated to a braid $\omega$.
			An important property of these complexes is the so-called \textit{Rouquier formula}.
			Given $w\in W$, its \emph{positive lift} is obtained by replacing
			each simple reflection in any reduced word for $w$ by the corresponding positive 
			generator of the braid group. This is a well defined element of the braid group.
			Similarly one defines the \emph{negative lift}.
			Now, if $\omega$ is the positive lift of $w\in W$, and $\nu$ is the negative 
			lift of another element $v$, we have
			\[
				\Hom_{\Kb(\D)}(F_{\omega},F_{\nu})=	\begin{cases}
														R	&\text{if $w=v$,}\\
														0	&\text{otherwise.}
													\end{cases}
			\]		
			Rouquier \cite{Rou_der} showed this for Weyl groups, by applying the Soergel functor to
			transfer the problem to category $\mathcal{O}$ and
			using the vanishing properties of the extension groups between standard and costandard objects therein.
			He also conjectured that the formula should hold for general Coxeter systems.
			This was shown by Libedinsky and Williamson \cite{LibWil} and, with other methods, by Makisumi \cite{Maki}.			
			
			In this paper we consider the dg category of complexes $\Cdgb(\D)$ with objects in the Hecke category.
			We adapt the diagrammatic description of $\D$ to one for the subcategory of $\Cdgb(\D)$ generated by
			representatives of the Rouquier complexes of the form $F_{s_1}^{\pm 1} \otimes F_{s_2}^{\pm 1} \otimes \dots \otimes F_{s_n}^{\pm 1}$.
			The graded objects associated with each of them splits into Bott-Samelson objects 
			(one for each subexpression of $s_1s_2\dots s_n$), 
			each of which lies in a certain cohomological degree.
			Roughly speaking, morphisms between two such complexes are again linear 
			combinations of diagrams, but in each diagram one has 
			to specify the starting and ending summand. To do this one covers some of the boundary 
			points by \emph{patches}, thus picking a certain subexpression of the starting (and ending) word. 
			One can then give an easy description of the differential map.
			
			With this language we recover the classical results above: the braid relations and the Rouquier formula.
			We also give a recipe to explicitly find the homotopy equivalences giving braid relations, and 
			write those for $m=2,3$.
		\section{Review of the diagrammatic Hecke category}\label{sec_diagHeck}
		We now recall the construction,
		by Elias and Williamson \cite{EW}, of the \emph{diagrammatic Hecke category} associated with 
		(a realization of) an arbitrary Coxeter system.	First we fix some notation.
		\subsection{Coxeter systems, braid groups and Hecke algebras}\label{subs_CoxbraHeck}
		Let $(W,S)$ be a Coxeter system, with $|S|<\infty$.
		Let $\cword$ be the free monoid generated by $S$. Its elements are called \emph{Coxeter words}
		and will be denoted by underlined letters. 
		If $\uw$ is a Coxeter word, we say that it \emph{expresses}
		the element $w\in W$ if $w$ is its image via the natural morphism $\cword\rightarrow W$.
			
		Let $\ell(\uw)$ denote the length of $\uw$, namely
		the number of its letters.
		The word $\uw$ is called \emph{reduced} if there is no shorter word expressing $w$. This defines
		a \emph{length function} $\ell: W\rightarrow \mathbb{N}$, where $\ell(w)$ is the length
		of any reduced word for $w$.

		The \emph{Bruhat graph} is the directed graph whose vertices are the elements $w\in W$ and 
		there is a directed edge from $w$ to all elements of the form $wt$, with $t$ a reflection 
		(i.e., the conjugate of an element in $S$) 
		such that $\ell(wt)>\ell(w)$. Then we define the \emph{Bruhat order} as the transitive closure
		of the relation defined by the Bruhat graph.

		For $\uw, \ux \in \cword$, we say that $\ux$ is a \emph{subword} of $\uw$, and we 
		write $\ux\preceq \uw$, 
		if $\ux$ is obtained from $\uw$ by erasing some letters. In other words,
		if $\uw=s_1s_2\cdots s_n$ with $s_i\in S$, then 
		$\ux$ is of the form $s_{i_1}s_{i_2}\cdots s_{i_r}$, 
		with $1\le i_1 < i_2 < \dots < i_r\le n$. A \emph{subexpression} is instead the datum of a subword
		together with the information of the precise positions of the letters in the original word. This 
		can be encoded in a \emph{01-sequence} $\boe\in\{0,1\}^n$ where the ones correspond to the letters
		that we pick and the zeros to those we do not pick. 

		For a Coxeter word $\uw=s_1\cdots s_k$, and a subexpression (or 01-sequence)
		$\boe=(e_1,\dots, e_k)\in \{0,1\}^k$, we say that $\boe$ \textit{expresses} the element $x\in W$
		if $x=s_1^{e_1}\cdots s_k^{e_k}$ (where $s_i^0:=1$): in this case we write $x=\uw^{\boe}$.
		Then a 01-sequence determines the elements
		$x_i:=\uw_{\le i}^{\boe_{\le i}}\in W$, where $\uw_{\le i}$ is the word consisting of the 
		first $i$ letters of $\uw$ and $\boe_{\le i}$ is the 01-sequence consisting of the first 
		$i$ symbols of $\boe$. This describes a path along the Bruhat graph, called \textit{Bruhat stroll}.	

		We define the \emph{path dominance} partial order on subexpressions by putting $\boe\le\boe'$ if
		$x_i\le x_i'$ for all $i$, where $x_i$ and $x_i'$ are the elements of the Bruhat strolls for $\boe$ and $\boe'$ respectively.		

%
%
		Let $B_W$ be the \emph{Artin-Tits group} (or \emph{generalized braid group}) associated to $(W,S)$.
		Let $\sigma_s$ denote the generator of $B_W$ associated to $s\in S$.
		We have a natural surjection $B_W\twoheadrightarrow W$ sending both $\sigma_s$ and 
		$\sigma_s^{-1}$ to $s$. Let $\Sigma^+=\{\sigma_s\}_{s\in S}$,
		$\Sigma^-=\{\sigma_s^{-1}\}_{s\in S}$ and $\Sigma=\Sigma^+ \sqcup \Sigma^-$. 
		Let $\bword$ be the free monoid generated by $\Sigma$. Its elements are called 
		\emph{braid words}. We call the elements of $\Sigma^+$ (and $\Sigma^-$) \emph{positive letters}
		(respectively \emph{negative letters}).
		Let $\varpi:\bword\rightarrow (\ZZ,+)$ be the morphism of monoids defined by
		$\varpi(\sigma_s)=1$ and $\varpi(\sigma_s^{-1})=-1$.
		In other words $\varpi$ counts the difference between the number of positive and negative letters.

		A braid word is called \emph{positive} (or \emph{negative}) if all its letters are.
		The notion of subword is defined in $\bword$ in the same way as in $\cword$.

		We call \emph{Coxeter projection} the map $\bword\rightarrow\cword$, sending both 
		$\sigma_s$ and $\sigma_s^{-1}$ to $s$. This has two distinguished sections: any Coxeter word $\uw=s_1s_2\dots s_k$
		has a \emph{positive word lift} $\sigma_{s_1}\sigma_{s_2}\dots \sigma_{s_k}$
		and a \emph{negative word lift} $\sigma_{s_1}^{-1}\sigma_{s_2}^{-1}\dots \sigma_{s_k}^{-1}$. 
		These also descend to well defined sections $W\rightarrow B_W$ sending $w$ to 
		the braid expressed by the positive (or negative) word lift of any reduced word for $w$. 
		This is called \emph{positive} (or \emph{negative}) \emph{lift} of $w$.
		
		Consider the ring $\Laur$ of Laurent polynomials in one 
		variable with integer coefficients. Let $\HH_{(W,S)}$ (or simply $\HH$) denote the \emph{Hecke algebra}
		associated to $(W,S)$. Recall that this is the
		$\Laur$-algebra generated by $\{H_s\}_{s\in S}$ with relations 
		\[
			\begin{cases}
				(H_s+v)(H_s-v^{-1})=0& 	\text{for all $s\in S$}\\
				\underbrace{H_sH_t\dots}_{m_{st}}=\underbrace{H_tH_s\dots}_{m_{st}}& 		\text{if $m_{st}<\infty$}
			\end{cases}
		\]			
		\subsection{Realizations of Coxeter systems}\label{sec_realCox} For a Coxeter system
		$(W,S)$ and a commutative ring $\kk$, we consider a \emph{realization} $\hh$
		of $W$, in the sense of \cite[\S\,3.1]{EW}. Recall that this is a free,
		finite rank $\kk$-module $\mathfrak{h}$ with the following properties:
		\begin{enumerate}
			\item there are elements
				$\{\alpha_s^\vee\}_{s\in S} \subset \mathfrak{h}$ and 
				$\{ \alpha_s \}_{s\in S} \subset \mathfrak{h}^*=\Hom(\mathfrak{h},k)$,
				called respectively \emph{simple coroots} and \emph{simple roots}, such that
				$\langle \alpha_s,\alpha_s^\vee\rangle =2$, for each $s\in S$;
			\item the group $W$ acts linearly on $\hh$ via 
				$s(v)=v-\langle \alpha_s, v\rangle \alpha_s^{\vee}$ and on $\hh^*$
				via $s(\lambda)=\lambda-\langle \lambda,\alpha_s^\vee\rangle\alpha_s$;
			\item the condition \cite[\S\ 3.1, (3.3)]{EW} holds. 
				\begin{rmk}
					This is a technical condition to ensure
					\emph{rotation invariance} of \emph{Jones-Wenzl morphisms} which will be mentioned later.
					One could prefer the more precise condition found by Hazi \cite{Haz}
					which also strengthen the equivalence between the diagrammatic category and 
					Abe's version of Soergel bimodules \cite{Abe}.		
				\end{rmk}
		\end{enumerate}
%

		Let $R=S(\mathfrak{h}^*)$, with $\mathfrak{h}^*$ in degree 2. The above action of $W$ on 
		$\mathfrak{h}^*$ extends naturally to $R$. 
		We define, for each $s\in S$, the \emph{Demazure operator} $\partial_s : R \rightarrow R$, via:
		\[
						f \mapsto \frac{f-s(f)}{\alpha_s}.
		\]
		Then, in particular, we have
		$\partial_s(\alpha_t)=\langle \alpha_t,\alpha_s^\vee\rangle$.
		
		Examples of realizations can be found in \cite[Ex.\ 3.3]{EW}.
		In this paper we will only consider \emph{balanced} realizations (see \cite[Definition 3.7]{EW})
		that satisfy \emph{Demazure surjectivity} (see \cite[Assumption 3.9]{EW}).
%
		\subsection{The diagrammatic Hecke category}\label{subs_defcatHeck}
		Given a realization $\mathfrak{h}$ over $\kk$ of a Coxeter system $(W,S)$, 
		one constructs the corresponding diagrammatic Hecke category $\D=\D(\mathfrak{h},\kk)$.
		This is a $\kk$-linear monoidal category enriched in graded $R$-bimodules.
		First one defines the Bott-Samelson category $\D_{\mathrm{BS}}$ by generators and relations,
		then one gets $\D$ as the Karoubi envelope of the closure of $\D_{\mathrm{BS}}$ by direct 
		sums and grade shifts.	
		\begin{enumerate}
			\item The objects of $\DBS$ are generated monoidally by objects $B_s$ for $s\in S$. So a general object
				corresponds to a Coxeter word: if
				$\uw=s_1\dots s_n$, let $B_{\uw}$ denote the object
				$B_{s_1}\otimes \cdots \otimes B_{s_n}$. We also let $\un$ denote the monoidal unit, corresponding to the 
				empty word. 
			\item Morphisms in $\Hom_{\DBS}(B_{\uw_1},B_{\uw_2})$
				are $\kk$-linear combinations of \emph{Soergel graphs}, which are defined as follows.
				\begin{itemize}
					\item We associate a color to each simple reflection.
					\item A Soergel graph is then a colored, \emph{decorated} planar graph 
						contained in the strip $\mathbb{R}\times [0,1]$, with boundary in 
						$\mathbb{R}\times\{0,1\}$.
					\item The bottom (and top) boundary is the arrangement
						of boundary points colored according to the letters of the source word $\uw_1$ 
						(and the target word $\uw_2$, respectively).
					\item The edges of the graph are colored in such a way that those connected with the 
						boundary have colors consistent with the boundary points.
					\item The other vertices of the graph are either: 
						\begin{enumerate}
							\item univalent (called \emph{dots}), which are declared of degree 1, or;
							\item trivalent	with three edges of the same color, of degree $-1$, or;
							\item $2m_{st}$-valent with edges of alternating colors corresponding to $s$ and $t$,
							 	if $m_{st}<\infty$, of degree 0.
								We also call them $(s,t)$-ars.
						\end{enumerate}
						\begin{center}
							\begin{tikzpicture}[baseline=0,scale=.7]
								\def\shift{4cm}
								\draw[gray,dashed] (0,0) circle (1cm); %
									\draw[red] (0,-1)--(0,0); \fill[red] (0,0) circle (2pt);
								\node[anchor=north,inner sep=8pt] at (0,-1) {(a) \textit{Dot}};
								\begin{scope}[xshift=\shift]
									\draw[gray,dashed] (0,0) circle (1cm); %
										\draw[red] (0,1)--(0,0); %
											\draw[red] (-30:1cm)--(0,0)--(-150:1cm);
									\node[anchor=north,inner sep=8pt] at (0,-1) {(b) \textit{Trivalent}};
								\end{scope}
								\begin{scope}[xshift=2*\shift,rotate=-15]
									\draw[gray,dashed] (0,0) circle (1cm); %
										\draw[red] (0,1)--(0,0); %
											\draw[red] (-30:1cm)--(150:1cm);%
												\draw[red](-150:1cm)--(30:1cm);
										\draw[blue,rotate=30] (0,1)--(0,0); %
											\draw[blue,rotate=30] (-30:1cm)--(150:1cm);%
												\draw[blue,rotate=30](-150:1cm)--(30:1cm);
									\foreach \i in {-60,-75,-90}{%
											\fill (\i:.5cm) circle (.5pt);
										}
								\end{scope}
								\node[anchor=north,xshift=1.4*\shift,inner sep=8pt] at (0,-1) {(c) \textit{$(s,t)$-ar}};
							\end{tikzpicture}
						\end{center}
					\item Decorations are boxes labeled by homogeneous elements in 
						$R$ that can appear in
						any region (i.e., connected component of the complement of the graph): 
						we will usually omit the boxes and just write the polynomials.
				\end{itemize}
				Then, composition of morphisms is given by gluing diagrams vertically, whereas tensor product
				is given by gluing them horizontally. The identity morphism of the object $B_{\uw}$ is
				the diagram with parallel vertical strands colored according to the word $\uw$.

			\item These diagrams are identified via some relations:
				\begin{description}
					\item[Polynomial relations.] 
						The first relations impose additivity and multiplicativity of polynomial boxes, more precisely:
						\begin{align}
							& \text{\emph{Addition}:}  &
							\begin{tikzpicture}[baseline=-0.1cm,scale=.7, transform shape]
								\draw[gray,dashed] (0,0) circle (1cm); \node[draw] at (0,0) {$f$};
							\end{tikzpicture}
							+
							\begin{tikzpicture}[baseline=-0.1cm,scale=.7, transform shape]
								\draw[gray,dashed] (0,0) circle (1cm); \node[draw] at (0,0) {$g$};
							\end{tikzpicture}
							&=
							\begin{tikzpicture}[baseline=-0.1cm,scale=.7, transform shape]
								\draw[gray,dashed] (0,0) circle (1cm); \node[draw] at (0,0) {$f+g$};
							\end{tikzpicture}
							\\
							& \text{\emph{Multiplication}:}  &
							\begin{tikzpicture}[baseline=-0.1cm,scale=.7, transform shape]
								\draw[gray,dashed] (0,0) circle (1cm); \node[draw] at (-0.4,0.4) {$f$};\node[draw] at (0.4,-0.4) {$g$};
							\end{tikzpicture}
							&=
							\begin{tikzpicture}[baseline=-0.1cm,scale=.7, transform shape]
								\draw[gray,dashed] (0,0) circle (1cm); \node[draw] at (0,0) {$fg$};
							\end{tikzpicture}
						\end{align}					
						This gives morphism spaces the structure of $R$-bimodules, by acting on the leftmost or the 
						rightmost region. 
						
						The other polynomial relations are:
						\begin{align}
							& \text{\emph{Barbell} relation:}  &
							\begin{tikzpicture}[baseline=-0.1cm,scale=.7, transform shape] \label{barbell}
								\draw[gray,dashed] (0,0) circle (1cm); \draw[red] (0,.5)--(0,-.5); \fill[red] (0,.5) circle (2pt);\fill[red] (0,-.5) circle (2pt);
							\end{tikzpicture}
							&=
							\begin{tikzpicture}[baseline=-0.1cm,scale=.7, transform shape]
								\draw[gray,dashed] (0,0) circle (1cm); \node[draw] at (0,0) {$\alpha_{\re{s}}$};
							\end{tikzpicture}			\\
							& \text{\emph{Sliding} relation:} & 
							\begin{tikzpicture}[baseline=-0.1cm,scale=.7, transform shape]\label{sliding}
								\draw[gray,dashed] (0,0) circle (1cm); \draw[red] (0,1)--(0,-1); 
								\node[draw] at (.5,0) {$f$};
							\end{tikzpicture}			
							&=
							\begin{tikzpicture}[baseline=-0.1cm,scale=.7, transform shape]
								\draw[gray,dashed] (0,0) circle (1cm); \draw[red] (0,1)--(0,-1); 
								\node[draw, inner sep =.08cm] at (-.5,0) {$\re{s}(f)$};
							\end{tikzpicture}			
							+
							\begin{tikzpicture}[baseline=-0.1cm,scale=.7, transform shape]
								\draw[gray,dashed] (0,0) circle (1cm); \draw[red] (0,1)--(0,.5); \fill[red] (0,.5) circle (2pt);
								\draw[red] (0,-1)--(0,-.5); \fill[red] (0,-.5) circle (2pt); 
								\node[draw] at (0,0) {$\partial_{\re{s}}(f)$};
							\end{tikzpicture}
							\end{align}
					\item[One color relations.] These are the following:
						\begin{align}
							& \text{\emph{Frobenius associativity}:} &
							\begin{tikzpicture}[baseline=-0.1cm,color=red,scale=sqrt(2)/3,scale=.7, transform shape]\label{associativity}
								\draw (-1.5,-1.5)--(-0.6,0)--(0.6,0)--(1.5,-1.5);
								\draw (-1.5,1.5)--(-0.6,0); \draw(0.6,0)--(1.5,1.5);
								\draw[dashed,gray] (0,0) circle ({3/sqrt(2)});
							\end{tikzpicture}
							&=
							\begin{tikzpicture}[baseline=-0.1cm,color=red,scale=sqrt(2)/3,rotate=90,scale=.7, transform shape]
								\draw (-1.5,-1.5)--(-0.6,0)--(0.6,0)--(1.5,-1.5);
								\draw (-1.5,1.5)--(-0.6,0); \draw(0.6,0)--(1.5,1.5);
								\draw[dashed,gray] (0,0) circle ({3/sqrt(2)});
							\end{tikzpicture} \\
							& \text{\emph{Frobenius unit}:} &
							\begin{tikzpicture}[color=red,baseline=-0.1cm,scale=.7, transform shape]\label{dotline}
								\draw[gray,dashed] (0,0) circle (1cm);
								\draw (0,-1) -- (0,1); \draw (0,0)--(.3,0);\fill (0.3,0) circle (2pt);
							\end{tikzpicture}
							&=
							\begin{tikzpicture}[baseline=-0.1cm,scale=.7, transform shape]
								\draw[gray,dashed] (0,0) circle (1cm);						
								\draw[red] (0,-1) -- (0,1);
							\end{tikzpicture}
							\\
							&\text{\emph{Needle} relation:} &
							\begin{tikzpicture}[baseline=0,scale=.7, transform shape]\label{eq_needle}
								\draw[gray,dashed] (0,0) circle (1cm);
								\draw[red] (0,-1)--(0,-0.3) arc (-90:270:0.3cm);
							\end{tikzpicture}
							&= 0
						\end{align}
					\item[Two color relations.] These allow us to move dots,
						or trivalent vertices, across $(s,t)$-ars.
						We give the two versions, according to the parity of $m_{st}$:
						\begin{equation}\label{2mtriv}
							\begin{tikzpicture}[baseline=0,scale=.8,transform shape]
								\draw[dashed,gray] (0,0) circle (1cm);
								\coordinate (a) at (-.3,0); \coordinate (b) at (0.5,0);
								\draw[red] (180:1cm)--(a); \draw[blue] ({180-360/11}:1cm)--(a);
								\draw[red] ({180-2*360/11}:1cm)--(a); \draw[blue] ({180-5*360/11}:1cm)--(b);
								\draw[red] ({180-4*360/11}:1cm)--(a);
								\draw[blue] ({180+360/11}:1cm)--(a);
								\draw[red] ({180+2*360/11}:1cm)--(a); \draw[blue] ({180+5*360/11}:1cm)--(b);
								\draw[red] ({180+4*360/11}:1cm)--(a);
								\draw[blue](a)--(b);
								\node at (-0.05,.5) {...};\node at (-0.05,-.5) {...};
							\end{tikzpicture}
							=
							\begin{tikzpicture}[baseline=0,scale=.8,transform shape]
								\draw[dashed,gray] (0,0) circle (1cm);
								\coordinate (a) at (-.5,0); \coordinate (b) at (0.15,0.35); \coordinate (c) at (0.15,-0.35);
								\draw[red] (180:1cm)--(a); \draw[blue] ({180-360/11}:1cm)--(b);
								\draw[red] ({180-2*360/11}:1cm)--(b); \draw[blue] ({180-5*360/11}:1cm)--(b);
								\draw[red] ({180-4*360/11}:1cm)--(b);
								\draw[blue] ({180+360/11}:1cm)--(c);
								\draw[red] ({180+2*360/11}:1cm)--(c); \draw[blue] ({180+5*360/11}:1cm)--(c);
								\draw[red] ({180+4*360/11}:1cm)--(c);
								\draw[blue](c)..controls (-0.15,0)..(b); \draw[red] (b) -- (c);\draw[red] (b)--(a)--(c);
								\draw[red] (b) ..controls (0.7,0).. (c);
								\node at (.2,.6) {...};\node at (.2,-.6) {...};\node at (.35,0) {...};
							\end{tikzpicture}\quad\text{or}\quad
							\begin{tikzpicture}[baseline=0,scale=.8,transform shape]
								\draw[dashed,gray] (0,0) circle (1cm);
								\coordinate (a) at (-.3,0); \coordinate (b) at (0.5,0);
								\draw[red] (180:1cm)--(a); \draw[blue] ({180-360/11}:1cm)--(a);
								\draw[red] ({180-2*360/11}:1cm)--(a); \draw[red] ({180-5*360/11}:1cm)--(b);
								\draw[blue] ({180-4*360/11}:1cm)--(a);
								\draw[blue] ({180+360/11}:1cm)--(a);
								\draw[red] ({180+2*360/11}:1cm)--(a); \draw[red] ({180+5*360/11}:1cm)--(b);
								\draw[blue] ({180+4*360/11}:1cm)--(a);
								\draw[red](a)--(b);
								\node at (-0.05,.5) {...};\node at (-0.05,-.5) {...};
							\end{tikzpicture}
							=
							\begin{tikzpicture}[baseline=0,scale=.8,transform shape]
								\draw[dashed,gray] (0,0) circle (1cm);
								\coordinate (a) at (-.5,0); \coordinate (b) at (0.15,0.35); \coordinate (c) at (0.15,-0.35);
								\draw[red] (180:1cm)--(a); \draw[blue] ({180-360/11}:1cm)--(b);
								\draw[red] ({180-2*360/11}:1cm)--(b); \draw[red] ({180-5*360/11}:1cm)--(b);
								\draw[blue] ({180-4*360/11}:1cm)--(b);
								\draw[blue] ({180+360/11}:1cm)--(c);
								\draw[red] ({180+2*360/11}:1cm)--(c); \draw[red] ({180+5*360/11}:1cm)--(c);
								\draw[blue] ({180+4*360/11}:1cm)--(c);
								\draw[blue](c)..controls (-0.15,0)..(b); \draw[red] (b) -- (c);\draw[red] (b)--(a)--(c);
								\draw[blue] (b) ..controls (0.7,0).. (c);
								\node at (.2,.6) {...};\node at (.2,-.6) {...};\node at (.35,0) {...};
							\end{tikzpicture}
						\end{equation}
						and
						\begin{equation}\label{eq_2mdot}
							\begin{tikzpicture}[baseline=0,scale=.8,transform shape]
								\draw[dashed,gray] (0,0) circle (1cm);
								\coordinate (a) at (-.2,0); \coordinate (b) at (0.5,0);
								\draw[red] (180:1cm)--(a); \draw[blue] ({180-360/11}:1cm)--(a);
								\draw[blue] ({180-3*360/11}:1cm)--(a); 
								\draw[red] ({180-4*360/11}:1cm)--(a);
								\draw[blue] ({180+360/11}:1cm)--(a);
								\draw[blue] ({180+3*360/11}:1cm)--(a); \fill[blue] (b) circle (2pt);
								\draw[red] ({180+4*360/11}:1cm)--(a);
								\draw[blue](a)--(b);
								\node at (-0.4,.5) {...};\node at (-0.4,-.5) {...};
							\end{tikzpicture}
							=
							\begin{tikzpicture}[baseline=0,scale=.8,transform shape]
								\draw[dashed,gray] (0,0) circle (1cm);
								\coordinate (a) at (-.2,0); \coordinate (b) at (0.5,0);
								\draw[red] (180:1cm)--(a); \draw[blue] ({180-360/11}:1cm)--(a);
								\draw[blue] ({180-3*360/11}:1cm)--(a); 
								\draw[blue] ({180+360/11}:1cm)--(a);
								\draw[blue] ({180+3*360/11}:1cm)--(a);
								\draw[red] ({180+4*360/11}:1cm) -- (b) -- ({180-4*360/11}:1cm); \draw[red](a)--(b);
								\node at (-0.4,.6) {...};\node at (-0.4,-.6) {...};
								\node[draw,circle, inner sep=.1cm,fill=white] at (a) {$\JW$};
							\end{tikzpicture}
							\quad\text{or}\quad
							\begin{tikzpicture}[baseline=0,scale=.8,transform shape]
								\draw[dashed,gray] (0,0) circle (1cm);
								\coordinate (a) at (-.2,0); \coordinate (b) at (0.5,0);
								\draw[red] (180:1cm)--(a); \draw[blue] ({180-360/11}:1cm)--(a);
								\draw[red] ({180-3*360/11}:1cm)--(a); 
								\draw[blue] ({180-4*360/11}:1cm)--(a);
								\draw[blue] ({180+360/11}:1cm)--(a);
								\draw[red] ({180+3*360/11}:1cm)--(a); \fill[red] (b) circle (2pt);
								\draw[blue] ({180+4*360/11}:1cm)--(a);
								\draw[red](a)--(b);
								\node at (-0.4,.5) {...};\node at (-0.4,-.5) {...};
							\end{tikzpicture}
							=
							\begin{tikzpicture}[baseline=0,scale=.8,transform shape]
								\draw[dashed,gray] (0,0) circle (1cm);
								\coordinate (a) at (-.2,0); \coordinate (b) at (0.5,0);
								\draw[red] (180:1cm)--(a); \draw[blue] ({180-360/11}:1cm)--(a);
								\draw[red] ({180-3*360/11}:1cm)--(a); 
								\draw[blue] ({180+360/11}:1cm)--(a);
								\draw[red] ({180+3*360/11}:1cm)--(a);
								\draw[blue] ({180+4*360/11}:1cm) -- (b) -- ({180-4*360/11}:1cm); \draw[blue](a)--(b);
								\node at (-0.4,.6) {...};\node at (-0.4,-.6) {...};
								\node[draw,circle, inner sep=.1cm,fill=white] at (a) {$\JW$};
							\end{tikzpicture}
						\end{equation}
						where the circles labeled $\JW$ are the \emph{Jones-Wenzel morphisms}. 
						These are certain $\kk$-linear combinations of diagrams (with circular boundary
						and $2m_{st}-2$ boundary points around it)
						that can be described in terms of the 2-colored Temperley-Lieb category\footnote{%
							More precisely, one can obtain them by deformation retraction from a colored 
							version of certain idempotents in the Temperley-Lieb algebra, which correspond
							to the highest weight irreducible summands inside tensor powers of the standard representation
							of $U_q(\mathfrak{sl}_2)$.}.
						
						Here are the examples for the first few values of $m_{st}$:
						\begin{align*}
							&m_{st}=2& %
							&\begin{tikzpicture}[baseline=0]
								\def\radius{.7cm}
								\draw[dashed,gray] (0,0) circle (\radius);
								\node[draw,circle, inner sep=.1cm,fill=white] (a) at (0,0) {$\JW$};
								\draw[red] (0:-\radius)--(a);
								\draw[blue] (0:\radius)--(a);
							\end{tikzpicture}
							=
							\begin{tikzpicture}[baseline=0]
								\def\radius{.7cm}
								\draw[dashed,gray] (0,0) circle (\radius);
								\draw[red] (0:-\radius)--(0:{-\radius/3});\fill[red] (0:{-\radius/3}) circle (1.5pt);
								\draw[blue] (0:\radius)--(0:{\radius/3});\fill[blue] (0:{\radius/3}) circle (1.5pt);
							\end{tikzpicture}
							\\
							&m_{st}=3& %
							&\begin{tikzpicture}[baseline=0]
								\def\radius{.7cm}
								\draw[dashed,gray] (0,0) circle (\radius);
								\node[draw,circle, inner sep=.1cm,fill=white] (a) at (0,0) {$\JW$};
								\foreach \i in {0,180}{%
									\draw[red] (\i:\radius)--(a);
									}
								\foreach \i in {90,270}{%
									\draw[blue] (\i:\radius)--(a);
									}									
							\end{tikzpicture}
							=
							\begin{tikzpicture}[baseline=0]
								\def\radius{.7cm}
								\draw[dashed,gray] (0,0) circle (\radius);
								\foreach \i in {0,180}{%
									\draw[red] (\i:\radius)--(\i:{\radius/3});%
										\fill[red] (\i:{\radius/3}) circle (1.5pt);
									}
								\draw[blue] (0,\radius)--(0,-\radius);
							\end{tikzpicture}
							+
							\begin{tikzpicture}[baseline=0]
								\def\radius{.7cm}
								\draw[dashed,gray] (0,0) circle (\radius);
								\draw[red] (\radius,0)--(-\radius,0);
								\foreach \i in {90,270}{%
									\draw[blue] (\i:\radius)--(\i:{\radius/3});%
										\fill[blue] (\i:{\radius/3}) circle (1.5pt);
									}									
							\end{tikzpicture}
							\\
							&m_{st}=4\qquad& %
							&\begin{multlined}[t]
								\begin{tikzpicture}[baseline=0]
									\def\radius{.7cm}
									\draw[dashed,gray] (0,0) circle (\radius);
									\node[draw,circle, inner sep=.1cm,fill=white] (a) at (0,0) {$\JW$};
									\foreach \i in {0,120,240}{%
										\draw[blue] (\i:\radius)--(a);
										}
									\foreach \i in {60,180,300}{%
										\draw[red] (\i:\radius)--(a);
										}									
								\end{tikzpicture}
								=
								\begin{tikzpicture}[baseline=0]
									\def\radius{.7cm}
									\draw[dashed,gray] (0,0) circle (\radius);
									\draw[red] (-\radius,0)--({-\radius/2},0);\fill[red] ({-\radius/2},0) circle (1.5pt);
									\draw[blue] (\radius,0)--({\radius/2},0);\fill[blue] ({\radius/2},0) circle (1.5pt);
									\draw[blue] (120:\radius) ..controls (120:{\radius/8}) and (-120:{\radius/8}).. (-120:\radius);
									\draw[red] (60:\radius) ..controls (60:{\radius/8}) and (-60:{\radius/8}).. (-60:\radius);
								\end{tikzpicture}
								+
								\begin{tikzpicture}[baseline=0,rotate=120]
									\def\radius{.7cm}
									\draw[dashed,gray] (0,0) circle (\radius);
									\draw[red] (-\radius,0)--({-\radius/2},0);\fill[red] ({-\radius/2},0) circle (1.5pt);
									\draw[blue] (\radius,0)--({\radius/2},0);\fill[blue] ({\radius/2},0) circle (1.5pt);
									\draw[blue] (120:\radius) ..controls (120:{\radius/8}) and (-120:{\radius/8}).. (-120:\radius);
									\draw[red] (60:\radius) ..controls (60:{\radius/8}) and (-60:{\radius/8}).. (-60:\radius);
								\end{tikzpicture}
								+
								\begin{tikzpicture}[baseline=0,rotate=240]
									\def\radius{.7cm}
									\draw[dashed,gray] (0,0) circle (\radius);
									\draw[red] (-\radius,0)--({-\radius/2},0);\fill[red] ({-\radius/2},0) circle (1.5pt);
									\draw[blue] (\radius,0)--({\radius/2},0);\fill[blue] ({\radius/2},0) circle (1.5pt);
									\draw[blue] (120:\radius) ..controls (120:{\radius/8}) and (-120:{\radius/8}).. (-120:\radius);
									\draw[red] (60:\radius) ..controls (60:{\radius/8}) and (-60:{\radius/8}).. (-60:\radius);
								\end{tikzpicture}
								+\\+\qn{2}{\bl{t}}\,
								\begin{tikzpicture}[baseline=0]
									\def\radius{.7cm}
									\draw[dashed,gray] (0,0) circle (\radius);
									\foreach \i in {60,180,300}{%
										\draw[red] (\i:\radius)--(0,0);
										}
									\foreach \i in {0,120,240}{%
										\draw[blue] (\i:\radius)--(\i:{\radius/2});%
											\fill[blue](\i:{\radius/2})circle(1.5pt);
										}									
								\end{tikzpicture}
								+\qn{2}{\re{s}}\,
								\begin{tikzpicture}[baseline=0]
									\def\radius{.7cm}
									\draw[dashed,gray] (0,0) circle (\radius);
									\foreach \i in {60,180,300}{%
										\draw[red] (\i:\radius)--(\i:{\radius/2});%
											\fill[red](\i:{\radius/2})circle(1.5pt);
										}
									\foreach \i in {0,120,240}{%
										\draw[blue] (\i:\radius)--(0,0);
										}									
								\end{tikzpicture}
							\end{multlined}
						\end{align*}
						Notice that these are all the Jones-Wenzl morphisms that we need to handle with 
						Coxeter groups of types $A$, $B$, $D$, $E$, $F$.
						In these cases, Relation \eqref{eq_2mdot} becomes:
						\begin{align*}
							&m_{st}=2\qquad&
							&\begin{tikzpicture}[baseline=0]
								\def\radius{.7cm}
								\draw[dashed,gray] (0,0) circle (\radius);
								\draw[red] (-\radius,0)--({\radius/2},0);%
									\fill[red] ({\radius/2},0) circle (1.5pt);
								\draw[blue] (0,-\radius)--(0,\radius);
							\end{tikzpicture}
							=
							\begin{tikzpicture}[baseline=0]
								\def\radius{.7cm}
								\draw[dashed,gray] (0,0) circle (\radius);
								\draw[red] (-\radius,0)--({-\radius/2},0);%
									\fill[red] ({-\radius/2},0) circle (1.5pt);
								\draw[blue] (0,-\radius)--(0,\radius);
							\end{tikzpicture}
							\\
							&m_{st}=3&
							&\begin{tikzpicture}[baseline=0]
								\def\radius{.7cm}
								\draw[dashed,gray] (0,0) circle (\radius);
								\coordinate (a) at ({-.1*\radius},0);
								\foreach \i in {70,180,-70}{%
									\draw[red] (a)--(\i:\radius);								
								}
								\foreach \i in {125,-125}{%
									\draw[blue] (a)--(\i:\radius);								
								}
								\coordinate (b) at ({\radius/2},0);
								\draw[blue] (a)--(b);\fill[blue] (b) circle (1.5pt);
							\end{tikzpicture}
							=
							\begin{tikzpicture}[baseline=0]
								\def\radius{.7cm}
								\draw[dashed,gray] (0,0) circle (\radius);
								\coordinate (a) at ({-.1*\radius},0);
								\draw[red] (70:\radius) ..controls (70:{\radius/3}) and (-70:{\radius/3}).. (-70:\radius);
								\draw[blue] (125:\radius) ..controls (125:{\radius/8}) and (-125:{\radius/8}).. (-125:\radius);
								\coordinate (b) at ({-\radius/2},0);
								\draw[red] ({-\radius},0)--(b);\fill[red] (b) circle (1.5pt);
							\end{tikzpicture}
							+
							\begin{tikzpicture}[baseline=0]
								\def\radius{.7cm}
								\draw[dashed,gray] (0,0) circle (\radius);
								\coordinate (a) at (0,0);
								\foreach \i in {70,180,-70}{%
									\draw[red] (a)--(\i:\radius);								
								}
								\foreach \i in {125,-125}{%
									\draw[blue] (\i:\radius)--(\i:{\radius/2});\fill[blue] (\i:{\radius/2}) circle (1.5pt);	
								}
							\end{tikzpicture}\\
							&m_{st}=4&
							&\begin{multlined}[t]
								\begin{tikzpicture}[baseline=0]
									\def\radius{.7cm}
									\draw[dashed,gray] (0,0) circle (\radius);
									\coordinate (a) at (0,0);
									\foreach \i in {90,180,-90}{%
										\draw[red] (a)--(\i:\radius);								
									}
									\coordinate (b) at ({\radius/2},0);
									\draw[red] (a)--(b);\fill[red] (b) circle (1.5pt);
									\foreach \i in {45,135,-45,-135}{%
										\draw[blue] (a)--(\i:\radius);								
									}
								\end{tikzpicture}
								=
								\begin{tikzpicture}[baseline=0]
									\def\radius{.7cm}
									\draw[dashed,gray] (0,0) circle (\radius);
									\draw[red] (0,\radius)--(0,-\radius);
									\draw[blue] (135:\radius) ..controls ({-\radius/5},{\radius/5}) and ({-\radius/5},{-\radius/5})..(-135:\radius);
									\draw[red] (-\radius,0)--({-.6*\radius},0);\fill[red] ({-.6*\radius},0) circle (1.5pt);
									\draw[blue] (45:\radius) ..controls ({\radius/3},{\radius/3}) and ({\radius/3},{-\radius/3})..(-45:\radius);
								\end{tikzpicture}
								+
								\begin{tikzpicture}[baseline=0]
									\def\radius{.7cm}
									\draw[dashed,gray] (0,0) circle (\radius);
									\draw[red] (-\radius,0) ..controls (-{\radius/4},0) and (0,{\radius/4}).. (0,\radius);
									\draw[blue] (135:\radius)--(135:{.6*\radius}); \fill[blue] (135:{.6*\radius}) circle (1.5pt);
									\draw[blue] (45:\radius)--(-135:\radius);\draw[blue] (-45:\radius)--(0,0);
									\draw[red] (0,{-\radius})--(0,{-.5*\radius});\fill[red](0,{-.5*\radius}) circle (1.5pt);
								\end{tikzpicture}
								+
								\begin{tikzpicture}[baseline=0]
									\def\radius{.7cm}
									\draw[dashed,gray] (0,0) circle (\radius);
									\draw[red] (-\radius,0) ..controls ({-\radius/4},0) and (0,{-\radius/4}).. (0,-\radius);
									\draw[blue] (-135:\radius)--(-135:{.6*\radius}); \fill[blue] (-135:{.6*\radius}) circle (1.5pt);
									\draw[blue] (-45:\radius)--(135:\radius);\draw[blue] (45:\radius)--(0,0);
									\draw[red] (0,{\radius})--(0,{.5*\radius});\fill[red](0,{.5*\radius}) circle (1.5pt);
								\end{tikzpicture}
								+ \\
								+\qn{2}{\bl{t}}\,
								\begin{tikzpicture}[baseline=0]
									\def\radius{.7cm}
									\draw[dashed,gray] (0,0) circle (\radius);
									\draw[red] (0,\radius)--(0,-\radius);
									\draw[red] ({-\radius},0)--(0,0);
									\foreach \i in {135,-135}{%
										\draw[blue] (\i:\radius)--(\i:{\radius/2});%
											\fill[blue](\i:{\radius/2})circle(1.5pt);
										}						
									\draw[blue] (45:\radius) ..controls (45:{\radius/2}) and (-45:{\radius/2}).. (-45:\radius);
								\end{tikzpicture}
								+\qn{2}{\re{s}}\,
								\begin{tikzpicture}[baseline=0]
									\def\radius{.7cm}
									\draw[dashed,gray] (0,0) circle (\radius);
									\foreach \i in {90,270}{%
										\draw[red] (\i:\radius)--(\i:{\radius/3});%
											\fill[red](\i:{\radius/3})circle(1.5pt);
										}
									\draw[red] (180:\radius)--(180:{.6*\radius});%
										\fill[red](180:{.6*\radius})circle(1.5pt);
									\draw[blue] (-135:\radius)--({-\radius/4},0)--({\radius/4},0)--(45:\radius);
									\draw[blue] (135:\radius)--({-\radius/4},0);
									\draw[blue] (-45:\radius)--({\radius/4},0);									
								\end{tikzpicture}
							\end{multlined}
						\end{align*}	
						For further details, we refer the reader to 
						\cite[\S\,5.2]{EW}, \cite{Elias_two} or \cite[\S\,8]{EMTW}.
					\item[Three color relations.] For each finite parabolic subgroup $W_I$
						of rank 3, there is a relation ensuring compatibility between the three 
						corresponding $2m$-valent vertices. We will not need these relations so we refer the reader to
						\cite[\S\ 1.4.3]{EW}
%
						
				\end{description}				
				Notice that all the relations are homogeneous, so the morphism spaces are \emph{graded}
				$R$-bimodules.
		\end{enumerate}
		This completes the definition of $\DBS$.
		Now we can finally give the main definition of this section.
		\begin{dfn}
			The \emph{diagrammatic Hecke category} $\D$ is the Karoubi envelope of the closure of $\DBS$
			by shifts and direct sums.
		\end{dfn}
		Let $(\cdot)$ denote the shift in the polynomial grading, so if $B$ is an object of $\D$, then
		$B(1)$ is such that $\Hom_i(-,B(1))=\Hom_{i+1}(-,B)$ where $\Hom_i$ denotes the degree $i$ piece of 
		the morphism space. Let $[\cdot]_{\oplus}$ denote the 
		operation of taking the split Grothendieck group of an additive category. 
		Notice that $[\D]_{\oplus}$ is naturally a $\Laur$-algebra: the ring structure is induced by 
		the tensor product and the action of $v$ corresponds to the shift (more precisely $v[B]:=[B(1)]$).
		Now we are ready to state the following \emph{categorification} result (see \cite[\S\,6.6]{EW}).
		\begin{thm}\label{thm_soergcat}
			If $\kk$ is a complete local ring, then $\D$ is Krull-Schmidt and
			there is a unique isomorphism of $\Laur$-algebras, called \emph{character},
			\begin{equation}\label{eq_Grothgroup}
				\mathrm{ch}:[\D]_\oplus \overset{\sim}{\rightarrow}\HH_{(W,S)}					
			\end{equation}
			sending the class of $B_s$ to $b_s$.
		\end{thm}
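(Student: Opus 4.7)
The plan is to follow the strategy of Elias--Williamson, splitting the statement into three parts: Krull--Schmidt, well-definedness plus surjectivity of the character, and finally injectivity.

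First, I would establish that $\DBS$ is a Hom-finite $\kk$-linear category in each graded piece. Granted that, and since $\kk$ is a complete local ring, endomorphism rings of objects in the Karoubi closure of the additive graded closure of $\DBS$ are finite $\kk$-algebras whose quotients modulo the Jacobson radical are finite dimensional over the residue field. Idempotents lift under such quotients, and this together with Fitting's lemma yields that $\D$ is Krull--Schmidt. The key input here is the evaluation of $\End_\D(\un)=R$ and that the Hom-spaces between Bott--Samelson objects are free of finite graded rank as $R$-modules (a consequence of the double leaves basis reviewed below).

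Next I would define $\mathrm{ch}$. On generators, send $[B_s]\mapsto b_s = \delta_s + v$ and extend multiplicatively and $\Laur$-linearly, using that the shift $(1)$ corresponds to multiplication by $v$ and direct sum to addition. To see that this descends to $[\D]_\oplus$, one must check that the defining relations of $\HH$ hold: the quadratic relation $[B_s]^2 = (v+v^{-1})[B_s]$ and the braid relations. The quadratic relation is realised diagrammatically by an explicit decomposition of $B_s\otimes B_s$ into $B_s(1)\oplus B_s(-1)$ via the trivalent vertex and dots; this uses only the one-color and polynomial relations. The braid relations correspond to the existence, for each finite dihedral parabolic $\{s,t\}$, of indecomposable decompositions on both sides of $\underbrace{B_sB_t\cdots}_{m_{st}}$ matching under $\mathrm{ch}$; these are built from the $(s,t)$-ars and Jones--Wenzl morphisms. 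Surjectivity of $\mathrm{ch}$ is then automatic, since the $b_s$ generate $\HH$ as a $\Laur$-algebra.

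For injectivity, the central tool is the double leaves basis: for each pair $(\uw_1,\uw_2)$ of Coxeter words, one constructs, from subexpressions of $\uw_1$ and $\uw_2$ expressing a common $x\in W$, an $R$-basis (the \emph{light leaves}) of $\Hom_\DBS(B_{\uw_1},B_{\uw_2})$ after tensoring with $R$ on both sides, with cardinalities matching the structure constants of the Kazhdan--Lusztig-type standard basis of $\HH$. Combined with Krull--Schmidt, this produces, for each $w\in W$ and each reduced word $\uw$, an indecomposable direct summand $B_w$ of $B_{\uw}$ not appearing in any $B_{\uw'}$ with $\ell(\uw')<\ell(w)$, and shows that the classes $\{[B_w]\}_{w\in W}$ form a $\Laur$-basis of $[\D]_\oplus$ mapping to a $\Laur$-basis of $\HH$ (the Kazhdan--Lusztig basis in favourable characteristic, or at least a triangularly related one in general). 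This forces $\mathrm{ch}$ to be injective, and uniqueness is immediate from generation.

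The main obstacle is the construction and linear independence of the double leaves basis, which is where the bulk of \cite{EW} is spent: one must exhibit enough morphisms (via an inductive subexpression procedure using dots, trivalent vertices and $2m_{st}$-valent vertices) and then separate them, typically by evaluating against the standard trace and using non-degeneracy of the intersection form controlled by the Soergel--Williamson Hom formula. Once this technical backbone is in place, the rest of the argument is formal.
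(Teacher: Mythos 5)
The paper does not prove this statement at all: Theorem \ref{thm_soergcat} is recalled as background with a pointer to \cite[\S\,6.6]{EW}, so there is no internal proof to compare against. Your sketch is essentially the Elias--Williamson strategy (Krull--Schmidt from completeness of $\kk$ plus Hom-finiteness of graded pieces, double leaves for the Hom formula, matching of $\Laur$-bases for bijectivity), and the technical backbone you identify --- construction and linear independence of light leaves, controlled by the intersection form --- is indeed where the real work lies.

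One logical point in your middle paragraph deserves correction. To define $\mathrm{ch}$ \emph{out of} $[\D]_\oplus$ you cannot simply ``send $[B_s]\mapsto b_s$, extend multiplicatively, and check the defining relations of $\HH$'': the split Grothendieck group is not presented by generators and relations a priori, and well-definedness requires respecting \emph{every} isomorphism $\bigoplus B_{\uw_i}(n_i)\cong\bigoplus B_{\uv_j}(m_j)$, not just the quadratic and braid decompositions. Checking that $b_s^2=(v+v^{-1})b_s$ and the dihedral relations hold in $\HH$ is the right move only if you are constructing a map in the opposite direction, $\HH\to[\D]_\oplus$, and even then one needs a presentation of $\HH$ in terms of the $b_s$, which is not the one given in the paper. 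The way \cite{EW} actually proceed is to define $\mathrm{ch}$ as an invariant of objects --- via graded ranks coming from the double leaves (equivalently the standard filtration and local intersection forms) --- verify that it is additive and multiplicative and sends $B_{\uw}$ to $b_{s_1}\cdots b_{s_n}$, and only then deduce bijectivity from Krull--Schmidt and the basis count. Your injectivity and uniqueness arguments are fine once $\mathrm{ch}$ is legitimately defined, so the gap is localized but real: as written, the map is not known to exist before the double leaves machinery is invoked, whereas your text presents its existence as the easy first step.
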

		\begin{rmk}
			Under the same assumptions of the theorem one can also classify the indecomposable objects and 
			describe graded ranks of the morphism spaces in terms of the above isomorphism.
		\end{rmk}
		\begin{rmk}
			As mentioned in the introduction, the category $\D$ can be seen as a diagrammatic description
			of other categories. It is in fact equivalent to the category of \emph{Soergel bimodules} 
			(see \cite[Theorem 6.30]{EW}), under certain assumptions that can 
			be weakened if one considers Abe's version of the latter category \cite{Abe}. 
			Among other variants, we mention the following geometric one.
			In the case $W$ is a Weyl group (or, more generally, crystallographic) 
			Riche and Williamson \cite{RW} proved that
			the diagrammatic category is equivalent to the category of equivariant parity 
			sheaves on the (affine) flag variety, in the sense of \cite{JMW}, 
			for an appropriate choice of realization.
		\end{rmk}
		\begin{rmk}\label{rmk_usesofdiag}
			Here are some applications of the Hecke category.
			It was used by Williamson \cite{Wil_tors} to give counterexamples for the expected bound in Lusztig's 
			conjecture; Riche and Williamson \cite{RW} used it to construct the action of the Hecke 
			category over the principal block of the category of rational representations
			of $\mathrm{GL}_n$ (this action was then established for all types by Ciappara \cite{Ciap} and independently by Bezrukavnikov and Riche \cite{BeRi});
			it was also used to explicitly compute the $p$-canonical basis for Weyl groups of small ranks: 
			see the paper by Jensen and Williamson \cite{JenWil}.
%
	\end{rmk}
		\subsection{Localization}\label{subs_localiz}
			Let $Q$ be the field of fractions of $R$. 
			Recall from \cite[\S\ 5.4]{EW} that the Karoubi envelope $\D_Q$ of the category obtained from
			$\DBS$ by extension of scalars from $R$ to $Q$ can be described diagrammatically.
			One considers an additional generating object $Q_s$ for each $s\in S$. For $\ux$ a Coxeter 
			word, let $Q_{\ux}$ denote the corresponding product of generators.
			Morphisms are still described by diagrams, with an additional kind of strand, corresponding 
			to the new generators $Q_s$, that we draw as a squiggly line of the corresponding color.
			Then we add to the generating vertices, new bivalent vertices, 
			as well as squiggly caps and cups, and squiggly $(s,t)$-ars, as follows:
			\begin{center}
				\begin{tikzpicture}
					\draw[red] (0,0)--(0,1);\draw[red,localized] (0,1)--(0,2);
					\draw[red,localized] (1.5,0)--(1.5,1);\draw[red] (1.5,1)--(1.5,2);
					\draw[red,localized] (3,0.5) arc (180:0:1);
					\draw[red,localized] (6,1.5) arc (180:360:1);
					\begin{scope}[xshift=10cm,yshift=1cm,rotate=-15]
						\draw[red,localized] (0,1)--(0,0); %
						\draw[red,localized] (-30:1cm)--(150:1cm);%
								\draw[red,localized](-150:1cm)--(30:1cm);
						\draw[blue,rotate=30,localized] (0,1)--(0,0); %
							\draw[blue,rotate=30,localized] (-30:1cm)--(150:1cm);%
								\draw[blue,rotate=30,localized](-150:1cm)--(30:1cm);
						\foreach \i in {-60,-75,-90}{%
								\fill (\i:.5cm) circle (.5pt);
							}
					\end{scope}
				\end{tikzpicture}
			\end{center}
			We will not give details for the new relations that one needs to impose,
			we only recall that certain isotopy relations involving the new strands are twisted by a 
			sign (the category is not cyclic any more). We will only need the following equalities, see 
			\cite[(5.23)]{EW}:
			\begin{align}\label{eq_localrel}
				&\begin{tikzpicture}[scale=0.7,baseline=0.7cm]
					\draw[red,localized] (0,0)--(0,1);\draw[red] (0,1)--(0,2);					
					\fill[red] (0,2) circle (3pt);
				\end{tikzpicture}
				\,=0&
				&\begin{tikzpicture}[scale=0.7,baseline=0.7cm]
					\draw[red] (0,0)--(0,1);\draw[red,localized] (0,1)--(0,2);
					\fill[red] (0,0) circle (3pt);
				\end{tikzpicture}
				\,=0&
				&
				\begin{tikzpicture}[scale=0.7,baseline=0.7cm]
					\draw[red,localized] (0,0)--(0,2);
					\node at (.5,1.1) {$f$};
				\end{tikzpicture}
				=
				\begin{tikzpicture}[scale=0.7,baseline=0.7cm]
					\node at (-.7,1.1) {$\re{s}(f)$};
					\draw[red,localized] (0,0)--(0,2);
				\end{tikzpicture}
			\end{align}
			The first two are linked with the fact that that in $\D_Q$ the object $B_s$ splits into $\un \oplus Q_s$ (the category $\D_Q$ is not graded any more), 
			and inclusions and projections are given by:
			\begin{align*}
				&\begin{tikzpicture}[baseline=0]
					\node (un) at (0,0) {$\un$};\node (Bs) at (2,0) {$B_{\re{s}}$};
					\draw[-latex] ([yshift=1mm]un.east)--([yshift=1mm]Bs.west);
					\draw[latex-] ([yshift=-1mm]un.east)--([yshift=-1mm]Bs.west);
					\draw[red] (1,.3)--(1,.8);\fill[red] (1,.3) circle (1.5pt);
					\draw[red] (1,-.3)--(1,-.8);\fill[red] (1,-.3) circle (1.5pt);
					\node[anchor=east] at (1,-.5) {$\frac{1}{\alpha_{\re{s}}}$};
				\end{tikzpicture},&
				&\begin{tikzpicture}[baseline=0]
					\node (Qs) at (0,0) {$Q_{\re{s}}$};\node (Bs) at (2,0) {$B_{\re{s}}$};
					\draw[-latex] ([yshift=1mm]Qs.east)--([yshift=1mm]Bs.west);				
					\draw[latex-] ([yshift=-1mm]Qs.east)--([yshift=-1mm]Bs.west);				
					\draw[red,localized] (1,.25)--(1,.7);\draw[red] (1,.7)--(1,1.05);
					\draw[red,localized] (1,-.25)--(1,-.7);\draw[red] (1,-.7)--(1,-1.05);
					\node[anchor=east] at (1,-.65) {$\frac{1}{\alpha_{\re{s}}}$};
				\end{tikzpicture}.
			\end{align*}
			If $\bof=(f_1,\dots,f_k)\in \{0,1\}^{\ell(\uw)}$ is a subexpression, let $Q_{\bof}$ denote the product
			$Q_{s_1}^{f_1}\dots Q_{s_k}^{f_k}$, where $Q_{s_i}^0:=\un$. Then it is easy to see that:
			\[
				B_{\uw}=\bigoplus_{\bof} Q_{\bof},
			\]
			and that the inclusion and projection of each factor is given by an appropriate combination 
			of the above morphisms.
			
			Then, given any $\phi\in \Hom_{\D}(B_{\uw},B_{\uw'})$, and 
			subexpressions $\bof$ of $\uw$ and $\bof'$ of $\uw'$,
			by composing with the the corresponding inclusion and projection, one can 
			compute each component:
			\[
				\phi^{\bof,\bof'}\in \Hom_{\D_Q}(Q_{\bof},Q_{\bof'})\cong 
					\begin{cases}
						Q & \text{if $\uw^{\bof}=(\uw')^{\bof'}$},\\
						0 &	\text{otherwise.}
					\end{cases}
			\]
			For the isomorphism, see \cite[\S\ 3.6]{EW}.
		\subsection{Light leaves and double leaves}\label{subs_lightdouble}
		Let $\uw$ be a Coxeter word and $\ux$ a subword which is reduced.
		One can construct, in the space $\Hom_{\D}(B_{\uw},B_{\ux})$, some special morphisms called the
		\textit{light leaves maps}, that were introduced by Libedinsky in \cite{Lib},
		and described diagrammatically in \cite{EW}.
		Then, one can use these maps to construct an explicit basis of the general morphism space 
		$\Hom_{\D}(B_{\uw_1},B_{\uw_2})$ as a left $R$-module (equivalently, one could consider the 
		right $R$-module structure).
		Let us recall this construction more precisely.

		Let $\ux$ be a reduced word for the element $x$ in $W$. 
		The light leaves maps in $\Hom(B_{\uw},B_{\ux})$ 
		are labeled by 01-sequences expressing $x$ in the following way.
	
		Consider a subexpression $\boe$ of $\uw$ expressing $x$. 
		One can \emph{decorate} $\boe$ according to the corresponding 
		Bruhat stroll (as defined in \S\ \ref{subs_CoxbraHeck}).
		The idea is that, at each step $i$, we consider the position where the next simple 
		reflection \textit{would} bring us,	regardless of the fact that this reflection will actually 
		be taken or not,
		and we decorate the corresponding symbol in $\boe$ in the following way. 
		\begin{itemize}
			\item if $x_{i}s_{i+1}>x_i$ then we decorate $e_{i+1}$ with a $U$ (for ``up'': taking the reflection $s_{i+1}$ \emph{would} bring us up
				in the Bruhat graph);
			\item if $x_{i}s_{i+1}<x_{i}$ then we decorate it with a $D$ (for ``down'').
		\end{itemize}
		We can then construct a morphism $L_{\uw,\boe}:B_{\uw}\rightarrow B_{\ux}$ from this data. 
		This will actually depend on some choices.
		\begin{dfn*}
			First, choose, for each $i<\ell(\uw)$, a reduced word $\ux_i$ for $x_i$, and put $\ux_{\ell(\uw)}:=\ux$. We define 
			$L_{\uw,\boe}$ by induction. Let $\lambda_0$ to be $\id_{\un}$. 
			Then, suppose we have constructed a morphism 
			$\lambda_i:B_{\uw_{\le i}}\rightarrow B_{\ux_i}$, and consider the decorated value of $e_{i+1}$.
			\begin{enumerate}
				\item If $e_{i+1}=U0$, then $x_i=x_{i+1}$ and we choose a \emph{braid move} $\Phi$
				(i.e., a sequence of braid relations) from $\ux_i$ to $\ux_{i+1}$. 
				\item If $e_{i+1}=U1$, then $x_{i+1}=x_is_{i+1}$ and we choose a braid move $\Phi$
					from the reduced word $\ux_i s_{i+1}$ to $\ux_{i+1}$.
				\item If $e_{i+1}=D0$, then there is 
					a reduced expression for $x_{i}$ of the form $\uz s_{i+1}$ where $\uz$ is a reduced expression 
					for $z=x_{i}s_{i+1}$,
					and	we choose a braid move $\Psi$ from $\ux_i$ to $\uz s_{i+1}$.
					Furthermore, we choose another braid 
					move $\Phi$ from the latter to $\ux_{i+1}$.
				\item Finally, if $e_{i+1}=D1$, then we choose, as in the previous case, a braid move $\Psi$
					from $\ux_{i}$ to $\uz s_{i+1}$. This time $\uz$ is a reduced expression for 
					$x_{i+1}$ and we choose another braid move $\Phi$ from $\uz$ to $\ux_{i+1}$.
			\end{enumerate}
			Then $\lambda_{i+1}$ is defined by one of the following diagrams, according to the 
			decorated value of $e_{i+1}$ (here we denote $\beta(\Phi)$ and $\beta(\Psi)$ the compositions 
			of $(s,t)$-ars corresponding to the braid moves $\Phi$ and $\Psi$):
			\[
				\begin{tikzpicture}[scale=.7]
					\draw[gray] (-.5,0)--(3,0); 
					\draw[violet] (0,0)--(0,.5);\draw[violet] (0.5,0)--(0.5,.5);\draw[violet] (1.5,0)--(1.5,.5);\node at (1,.25) {\dots};\draw[violet] (2,0)--(2,.5);
					\draw (-.25,.5)--(2.25,.5)--(2.25,1.5)--(-.25,1.5)--cycle; \node at (1,1) {$\lambda_i$};
					\draw[violet] (0,1.5)--(0,2);\draw[violet] (0.5,1.5)--(0.5,2);\draw[violet] (1.5,1.5)--(1.5,2);\node at (1,1.75) {\dots};\draw[violet] (2,1.5)--(2,2);
					\draw[red] (2.5,0)--(2.5,1);\fill[red](2.5,1)circle (2pt);
					\draw (-.25,2)--(2.25,2)--(2.25,2.5)--(-.25,2.5)--cycle; \node at (1,2.25) {\footnotesize $\beta(\Phi)$};
					\draw[violet] (0,2.5)--(0,3);\draw[violet] (0.5,2.5)--(0.5,3);\draw[violet] (1.5,2.5)--(1.5,3);\node at (1,2.75) {\dots};\draw[violet] (2,2.5)--(2,3);
					\draw[gray] (-.5,3)--(3,3);
					\node[font=\small] at (1.25,-.5){$e_{i+1}=U0$};
				\end{tikzpicture}\qquad
				\begin{tikzpicture}[scale=.7]
					\draw[gray] (-.5,0)--(3,0); 
					\draw[violet] (0,0)--(0,.5);\draw[violet] (0.5,0)--(0.5,.5);\draw[violet] (1.5,0)--(1.5,.5);\node at (1,.25) {\dots};\draw[violet] (2,0)--(2,.5);
					\draw (-.25,.5)--(2.25,.5)--(2.25,1.5)--(-.25,1.5)--cycle; \node at (1,1) {$\lambda_i$};
					\draw[violet] (0,1.5)--(0,2);\draw[violet] (0.5,1.5)--(0.5,2);\draw[violet] (1.5,1.5)--(1.5,2);\node at (1,1.75) {\dots};\draw[violet] (2,1.5)--(2,2);
					\draw[red] (2.5,0)--(2.5,2);
					\draw (-.25,2)--(2.75,2)--(2.75,2.5)--(-.25,2.5)--cycle; \node at (1.25,2.25) {\footnotesize $\beta(\Phi)$};
					\draw[violet] (0,2.5)--(0,3);\draw[violet] (0.5,2.5)--(0.5,3);\draw[violet] (1.5,2.5)--(1.5,3);\node at (1,2.75) {\dots};\draw[violet] (2,2.5)--(2,3);\draw[violet] (2.5,2.5)--(2.5,3);
					\draw[gray] (-.5,3)--(3,3);
					\node[font=\small] at (1.25,-.5){$e_{i+1}=U1$};
				\end{tikzpicture}\qquad
				\begin{tikzpicture}[scale=.7]
					\draw[gray] (-.5,0)--(3,0); 
					\draw[violet] (0,0)--(0,.5);\draw[violet] (0.5,0)--(0.5,.5);\draw[violet] (1.5,0)--(1.5,.5);\node at (1,.25) {\dots};\draw[violet] (2,0)--(2,.5);
					\draw (-.25,.5)--(2.25,.5)--(2.25,.9)--(-.25,.9)--cycle; \node at (1,.7) {\footnotesize $\lambda_i$};
					\draw[violet] (0,.9)--(0,1.1);\draw[violet] (0.5,0.9)--(0.5,1.1);\draw[violet] (1.5,0.9)--(1.5,1.1);\draw[violet] (2,0.9)--(2,1.1);\node at (1,1) {\dots};
					\draw (-.25,1.1)--(2.25,1.1)--(2.25,1.5)--(-.25,1.5)--cycle; \node at (1,1.3) {\footnotesize $\beta(\Psi)$};
					\draw[violet] (0,1.5)--(0,2);\draw[violet] (0.5,1.5)--(0.5,2);\draw[red] (2,1.5) arc (180:0:.25cm);\node at (1,1.75) {\dots};\draw[violet] (1.5,1.5)--(1.5,2);
					\draw[red] (2.5,0)--(2.5,1.5); \draw[red] (2.25,2)--(2.25,1.75);
					\draw (-.25,2)--(2.5,2)--(2.5,2.5)--(-.25,2.5)--cycle; \node at (1.125,2.25) {\footnotesize $\beta(\Phi)$};
					\draw[violet] (0,2.5)--(0,3);\draw[violet] (0.5,2.5)--(0.5,3);\node at (1.125,2.75) {\dots};\draw[violet] (1.75,2.5)--(1.75,3);\draw[violet] (2.25,2.5)--(2.25,3);
					\draw[gray] (-.5,3)--(3,3);
					\node[font=\small] at (1.25,-.5){$e_{i+1}=D0$};
				\end{tikzpicture}\qquad
				\begin{tikzpicture}[scale=.7]
					\draw[gray] (-.5,0)--(3,0); 
					\draw[violet] (0,0)--(0,.5);\draw[violet] (0.5,0)--(0.5,.5);\draw[violet] (1.5,0)--(1.5,.5);\node at (1,.25) {\dots};\draw[violet] (2,0)--(2,.5);
					\draw (-.25,.5)--(2.25,.5)--(2.25,.9)--(-.25,.9)--cycle; \node at (1,.7) {\footnotesize $\lambda_i$};
					\draw[violet] (0,.9)--(0,1.1);\draw[violet] (0.5,0.9)--(0.5,1.1);\draw[violet] (1.5,0.9)--(1.5,1.1);\draw[violet] (2,0.9)--(2,1.1);\node at (1,1) {\dots};
					\draw (-.25,1.1)--(2.25,1.1)--(2.25,1.5)--(-.25,1.5)--cycle; \node at (1,1.3) {\footnotesize $\beta(\Psi)$};
					\draw[violet] (0,1.5)--(0,2);\draw[violet] (0.5,1.5)--(0.5,2);\draw[red] (2,1.5) arc (180:0:.25cm);\node at (1,1.75) {\dots};\draw[violet] (1.5,1.5)--(1.5,2);
					\draw[red] (2.5,0)--(2.5,1.5);
					\draw (-.25,2)--(1.75,2)--(1.75,2.5)--(-.25,2.5)--cycle; \node at (0.75,2.25) {\footnotesize $\beta(\Phi)$};
					\draw[violet] (0,2.5)--(0,3);\draw[violet] (0.5,2.5)--(0.5,3);\node at (1,2.75) {\dots};\draw[violet] (1.5,2.5)--(1.5,3);
					\draw[gray] (-.5,3)--(3,3);
					\node[font=\small] at (1.25,-.5){$e_{i+1}=D1$};
				\end{tikzpicture}
				\]
				Then one can check that the ending word of $\lambda_{i+1}$ is the reduced word 
				$\ux_{i+1}$	and the induction can continue.
		
				Finally we define $L_{\uw,\boe}$ to be $\lambda_{\ell(\uw)}\in \Hom_{\D}(B_{\uw},B_{\ux})$.
		\end{dfn*}
		\begin{exa}
			Let $\uw=\underline{stsuts}$, with $m_{st}=3$, and $\ux=st$. Consider the subexpression
			$\boe=111001$ expressing $x$. One can check that the corresponding decoration of $\boe$ is:
			\[
				U1\,U1\,U1\,U0\,D0\,D1.
			\]
			The following is a construction of $L_{\uw,\boe}$.
			\begin{gather*}
				\lambda_0=
					\begin{tikzpicture}[baseline=.4cm]
						\draw[gray] (0,0)--(1,0);\draw[gray] (0,1)--(1,1);
					\end{tikzpicture}\qquad
				\lambda_1=
					\begin{tikzpicture}[baseline=.4cm]
						\draw[gray] (0.25,0)--(.75,0);\draw[gray] (0.25,1)--(.75,1);
						\draw[red] (0.5,0)--(0.5,1);
					\end{tikzpicture}\qquad
				\lambda_2=
					\begin{tikzpicture}[baseline=.4cm]
						\draw[gray] (0.25,0)--(1.25,0);\draw[gray] (0.25,1)--(1.25,1);
						\draw[red] (0.5,0)--(0.5,1); \draw[blue] (1,0)--(1,1);
					\end{tikzpicture}\qquad
				\lambda_3=
					\begin{tikzpicture}[baseline=.4cm]
						\draw[gray] (0.25,0)--(1.75,0);\draw[gray] (0.25,1)--(1.75,1);
						\draw[red] (0.5,0)--(0.5,1); \draw[blue] (1,0)--(1,1);\draw[red] (1.5,0)--(1.5,1);
					\end{tikzpicture} \\[1em]
				\lambda_4=
					\begin{tikzpicture}[baseline=.4cm,x=.8cm]
						\draw[gray] (0.25,0)--(2.25,0);\draw[gray] (0.25,1)--(2.25,1);
						\draw[red] (0.5,0)--(0.5,1); \draw[blue] (1,0)--(1,1);\draw[red] (1.5,0)--(1.5,1);
						\draw[green] (2,0)--(2,0.5);\fill[green] (2,0.5) circle (1.5pt);
					\end{tikzpicture} \qquad
				\lambda_5=
					\begin{tikzpicture}[baseline=.65cm,x=.7cm]
						\draw[gray] (0.25,0)--(2.75,0);\draw[gray] (0.25,1.5)--(2.75,1.5);
						\draw[red] (0.5,0)--(0.5,0.5); \draw[blue] (1,0)--(1,.5);\draw[red] (1.5,0)--(1.5,.5);
						\draw[red] (0.5,0.5)--(1,0.75)--(1,1.5);\draw[red] (1.5,.5)--(1,0.75);
						\draw[blue] (1,0.5)--(1,0.75)--(0.5,1)--(0.5,1.5); \draw[blue] (1,0.75)--(1.5,1);
						\draw[green] (2,0)--(2,0.4);\fill[green] (2,0.4) circle (1.5pt);
						\draw[blue] (2.5,0)--(2.5,1);
						\draw[blue] (1.5,1)--(2,1.25)--(2,1.5);\draw[blue] (2.5,1)--(2,1.25);
					\end{tikzpicture} \qquad 
				\lambda_6=
					\begin{tikzpicture}[baseline=.9cm,x=.7cm]
						\draw[gray] (0.25,0)--(3.25,0);\draw[gray] (0.25,2)--(3.25,2);
						\draw[red] (0.5,0)--(0.5,0.5); \draw[blue] (1,0)--(1,.5);\draw[red] (1.5,0)--(1.5,.5);
						\draw[red] (0.5,0.5)--(1,0.75)--(1,2);\draw[red] (1.5,.5)--(1,0.75);
						\draw[blue] (1,0.5)--(1,0.75)--(0.5,1)--(0.5,2); \draw[blue] (1,0.75)--(1.5,1);
						\draw[green] (2,0)--(2,0.4);\fill[green] (2,0.4) circle (1.5pt);
						\draw[blue] (2.5,0)--(2.5,1);
						\draw[blue] (1.5,1)--(2,1.25)--(2,1.5);\draw[blue] (2.5,1)--(2,1.25);
						\draw[blue] (3,0)--(3,1.5) ..controls (3,2) and (2,2).. (2,1.5);
					\end{tikzpicture}
					=
					\begin{tikzpicture}[x=.35cm,baseline=.9cm]
						\draw[gray] (.5,0)--(6.5,0);\draw[gray] (.5,2)--(6.5,2);
						\draw[red] (1,0)..controls (1,.5).. (2,1) ..controls (3,.5)..(3,0);
						\draw[red] (2,1)--(2,2);
						\draw[blue] (2,0)--(2,1)..controls (1,1.5)..(1,2);
						\draw[blue] (2,1)..controls (3,1.5) and (6,1.5).. (6,0);
						\clip (2,0)--(2,1)..controls (3,1.5) and (6,1.5).. (6,0);
						\draw[blue] (5,0)--(5,1.5);
						\draw[green] (4,0)--(4,0.5); \fill[green] (4,0.5) circle (1.5pt);
					\end{tikzpicture}
			\end{gather*}
		\end{exa}
		Now consider two Coxeter words $\uw$ and $\uw'$. We want to describe an $R$-basis for
		$\Hom(B_{\uw},B_{\uw'})$. 
		Consider the \emph{flip operation} $\overline{(\cdot)}$ on diagrams, consisting of flipping 
		diagrams upside down.
		Now, for each pair of 01-sequences $\boe$ of $\uw$ and $\boe'$ 
		of $\uw'$ such that $\uw^{\boe}=(\uw')^{\boe'}$ we pick \emph{a choice of} morphism
		$L_{\uw,\boe}$ and \emph{a choice of} morphism $L_{\uw',\boe'}$ as defined above. This means 
		that we make a choice for all the needed reduced words and braid moves and consider the 
		corresponding light leaves maps. Now let $\mathbb{L}_{\uw,\uw'}\subset \Hom(B_{\uw},B_{\uw'})$ 
		be the set of the compositions $L_{\boe,\boe'}:=\overline{L_{\uw',\boe'}}\circ L_{\uw,\boe}$. 		
		Morphisms as those in $\mathbb{L}_{\uw,\uw'}$ are called \emph{double leaves maps}.
		Then one has the following results (see \cite[\S\ 6.3]{EW}).
		\begin{pro}\label{pro_uppertriang}
			Let $\bof$ and $\bof'$ be subexpressions of $\uw$ and $\uw'$ respectively.
			The component $(L_{\boe,\boe'})^{\bof,\bof'}\in \Hom(Q_{\bof},Q_{\bof'})$, obtained 
			by localization, satisfies:
			\begin{enumerate}
				\item $(L_{\boe,\boe'})^{\bof,\bof'}=0$ unless $\uw^\bof\neq (\uw')^{\bof'}$, and both
					$\bof\le \boe$ and $\bof'\le\boe'$;
				\item $(L_{\boe,\boe'})^{\bof,\bof'}\neq 0$ if $\boe=\bof$ and $\boe'=\bof'$.
			\end{enumerate}
		\end{pro}
		\begin{thm}\label{thm_doubleleaves}
			The set $\mathbb{L}_{\uw,\uw'}$ is a basis of $\Hom_{\D}(B_{\uw},B_{\uw'})$ as a left $R$-module.	
		\end{thm}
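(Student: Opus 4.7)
The plan is to prove spanning and linear independence separately. For spanning I would first establish an auxiliary "single leaves" statement: if $\ux$ is a reduced word expressing $x\in W$, then the light leaves $\{L_{\uw,\boe}\}$ with $\boe$ ranging over $01$-sequences of $\uw$ expressing $x$ generate $\Hom_{\D}(B_{\uw},B_{\ux})$ as a right $R$-module. This is essentially Libedinsky's original argument and proceeds by induction on $\ell(\uw)$: given an arbitrary diagram, use the Frobenius, sliding, needle and two-color relations to push the rightmost strand of $B_{\uw}$ into one of the four local forms depicted in the definition of $L_{\uw,\boe}$, reducing to $\Hom(B_{\uw_{<n}},B_{\ux_{n-1}})$ to which the inductive hypothesis applies. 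The Jones-Wenzl relation \eqref{eq_2mdot} is what makes the $D$-type cases work. The full double leaves statement follows by applying this argument on both sides: any $f\in \Hom(B_{\uw_1},B_{\uw_2})$ can be written, via flipping, as a right $R$-linear combination of compositions $\overline{L_{\uw_2,\boe_2}}\circ L_{\uw_1,\boe_1}$ factoring through $B_{\ux}$ for some common reduced expression $\ux$ of $x=\uw_1^{\boe_1}=\uw_2^{\boe_2}$.

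For linear independence I would appeal to Theorem~\ref{thm_soergcat}. Under the character map, $[B_{\uw_i}]$ corresponds to $b_{s_1}\cdots b_{s_k}\in\HH$, and the standard trace on $\HH$ computes the graded rank of $\Hom_{\D}(B_{\uw_1},B_{\uw_2})$ as a free graded left $R$-module. A direct combinatorial expansion of the product of Kazhdan-Lusztig generators, using the defining quadratic relation $b_s^2=(v+v^{-1})b_s$, yields
\[
\mathrm{grk}_R \Hom_{\D}(B_{\uw_1},B_{\uw_2})=\sum_{x\in W}\Bigl(\sum_{\boe:\uw_1^{\boe}=x}v^{d(\boe)}\Bigr)\Bigl(\sum_{\boe':\uw_2^{\boe'}=x}v^{d(\boe')}\Bigr),
\]
where $d(\boe)$ is the defect statistic counting $U0$ minus $D1$ entries. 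By inspecting the four local pictures in the construction of $L_{\uw,\boe}$, each contributing a shift of $+1$ in case $U0$ and $-1$ in case $D1$ and $0$ in cases $U1, D0$, one sees that $\overline{L_{\uw_2,\boe_2}}\circ L_{\uw_1,\boe_1}$ lives in the correct bidegree to match this counting. Since the double leaves already span and the cardinality with grading matches the rank, linear independence follows.

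The main obstacle is the single leaves spanning statement: handling the $D$-type steps requires carefully turning an arbitrary local configuration into one where a Jones-Wenzl morphism can be absorbed, and verifying that the inductive reduction does not depend on the (many) choices of reduced words $\ux_i$ and braid moves $\Phi,\Psi$. The individual $L_{\uw,\boe}$ do depend on these choices, so one must argue that the \emph{span} (or, equivalently, the image modulo lower terms in a suitable filtration by Bruhat order on the factoring element $x$) is well-defined; this is where the three-color relations and the rotation invariance of Jones-Wenzl morphisms become essential to absorb ambiguities coming from different braid moves representing the same element of $B_W$.
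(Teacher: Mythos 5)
The paper does not prove this statement at all: it is imported verbatim from Elias--Williamson \cite[Theorem 6.12]{EW}, so your attempt has to be measured against their argument (and Libedinsky's \cite{Lib}). Your spanning step has the right overall shape --- an induction adding one strand at a time and using the polynomial, one-colour and two-colour relations to force the new strand into one of the four local pictures --- but the auxiliary claim is false as stated: the single light leaves $L_{\uw,\boe}$ with $\boe$ expressing $x$ do \emph{not} generate $\Hom_{\D}(B_{\uw},B_{\ux})$ as a right $R$-module. Already for $\uw=\ux=s$ there is exactly one such light leaf (the identity), while $\Hom_{\D}(B_s,B_s)$ is free of rank $2$, with the broken strand (dot composed with flipped dot) as the second basis element. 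The correct intermediate statement is a spanning statement \emph{modulo morphisms factoring through $B_{\ux'}$ with $x'<x$ in the Bruhat order}, or equivalently one runs the induction directly on double leaves; you gesture at this filtration in your last paragraph, but the induction must be set up that way from the beginning. (Incidentally, your defect statistic is off: it is $D0$, not $D1$, that contributes $-1$ to the degree --- the $D0$ step uses a trivalent vertex, the $D1$ step a plain cap.)

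The serious gap is linear independence, where your argument is circular. The fact that the standard pairing on $\HH$ computes the graded rank of $\Hom_{\D}(B_{\uw_1},B_{\uw_2})$ --- Soergel's hom formula for the diagrammatic category --- is itself a \emph{consequence} of Theorem \ref{thm_doubleleaves} in the Elias--Williamson development, not an available input: in a category presented by generators and relations there is no a priori lower bound on the size of a morphism space, and Theorem \ref{thm_soergcat} (which moreover assumes $\kk$ complete local, an assumption absent here) only identifies the split Grothendieck group; the statement about graded ranks is exactly the content of the remark following it, deduced \emph{from} the double leaves basis. Even granting spanning, a spanning set of the ``right'' graded cardinality is a basis only if you already know the module is free of that graded rank, which is what is at stake. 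The actual proof establishes independence by evaluation: apply the functor from $\DBS$ to $R$-bimodules, tensor with the fraction field $Q$ of $R$, decompose $Q\otimes_R B_{\uw}$ into standard bimodules $Q_x$, and check that the matrix of pairings of light leaves against the resulting inclusions and projections is unitriangular with invertible diagonal with respect to the path dominance order on subexpressions; independence of the images then forces independence in $\D$. Without some such evaluation (or an equivalent faithfulness statement for a concrete realization of $\D$), the rank count cannot get off the ground.
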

		\section{Diagrammatics for dg Rouquier complexes}
		In this section we introduce the Rouquier complexes in the homotopy category of $\D$. Then we adapt 
		the diagrammatic description to one for the dg monoidal category generated by their standard representatives.
		\subsection{Rouquier complexes}\label{subs_Roucom}
		Consider the homotopy category $\Kb(\D)$. 
		The monoidal structure of $\D$ extends to $\Kb(\D)$ via the usual definition of tensor product
		of complexes. Recall that this is as follows. Let $\Ao,\Bo\in \Kb(\D)$, then:
		\[
			(\Ao\otimes \Bo)^i=\bigoplus_{p+q=i} A^p\otimes B^q,
		\]
		and the differential map, restricted to $A^p\otimes B^q$ is:
		\begin{equation}\label{eq_signconv}
			d\otimes \id + (-1)^p \id \otimes d.
		\end{equation}
		Notice that the unit of $\D$, seen as a complex concentrated in degree 0, and denoted by $\un[0]$,
		is the unit of $\Kb(\D)$.
			
		Consider the following \emph{standard} and \emph{costandard} complexes:
		\[
			\begin{tikzcd}[row sep=small,%
							execute at end picture={%
								\def\hdott{.32cm}
								\pic[red,yshift=.3em] at (A) {dot={\hdott}};
								\pic[red,yshift=1.5em,yscale=-1] at (B) {dot={\hdott}};
								}]
				F_{\re{s}}=\,\cdots\ar[r]		& 0 \ar[r]		& 0	\ar[r]								& B_{\re{s}}\ar[r,""{coordinate,name=A}]	& \un(1)\ar[r]	& 0\ar[r]		& \cdots \\
															& \cohdeg{-2}	& \cohdeg{-1}							& \cohdeg{0}											& \cohdeg{1}	& \cohdeg{2}	& \\
				F_{\re{s}}^{-1}=\,\cdots\ar[r]	& 0\ar[r]		& \un(-1)\ar[r,""{coordinate,name=B}]	& B_{\re{s}}\ar[r]							& 0	\ar[r]		& 0\ar[r]		& \cdots \\
			\end{tikzcd}
		\]
		where the numbers in the middle row denote the 
		cohomological degree in $\Kb(\D)$. If $\sigma=\sigma_{s}^{\pm 1}\in \Sigma$, then let 
		$F_\sigma$ denote $F_s^{\pm 1}$.
		Then, for any braid word $\uom=\sigma_1 \sigma_2 \dots \sigma_n$, with $\sigma_i\in \Sigma$,
		we put:
		\[
			F_{\uom}^\bullet:=F_{\sigma_1}\otimes F_{\sigma_2}\otimes \dots \otimes F_{\sigma_n}.
		\] 
		In the sequel we will often omit tensor products.
		Objects of this form are called
		\emph{Rouquier complexes}. They were introduced in \cite{Rou_cat} in terms of Soergel bimodules 
		to categorify braid group actions on categories and study natural transformations 
		between the induced endofunctors.
		\subsection{Basic properties of Rouquier complexes}
		These properties were first proved by Rouquier \cite{Rou_cat} in the 
		language of Soergel bimodules. 
		\begin{pro}\label{pro_braidRou}
			One has the following.
			\begin{enumerate}
			 	\item\label{item_propinvers} Let $s\in S$, then $F_s F_s^{-1}\cong F_s^{-1}F_s\cong \un[0]$.
				\item\label{item_propbraid} Let $s,t\in S$ with $m_{st}<\infty$, then:
					\[
						\underbrace{F_s F_tF_sF_t \cdots}_{m_{st} \text{ times}} \cong %
							\underbrace{F_t F_sF_tF_s \cdots}_{m_{st} \text{ times}}
					\]
			\end{enumerate}
			Hence, for each pair of braid words $\uom_1$, $\uom_2$ expressing
			the same element $\omega\in B_W$, there is an isomorphism $F_{\uom_1}^\bullet\cong F_{\uom_2}^\bullet$.
			Furthermore:
			\begin{enumerate}[resume]
				\item\label{item_propcanonic}(Rouquier Canonicity)
					for each $\uom_1$ and $\uom_2$ as above, we have:
					\[
						\Hom(F_{\uom_1}^\bullet,F_{\uom_2}^\bullet)\cong R,
					\]
					and one can chose homotopy equivalences
					$\gamma_{\uom_1}^{\uom_2}\in \Hom(F_{\uom_1}^\bullet,F_{\uom_2}^\bullet)$ such that the system 
					$\{\gamma_{\uom_1}^{\uom_2}\}_{\uom_1,\uom_2}$ is transitive, namely:
					\begin{align*}
						\gamma_{\uom}^{\uom}=&\id_{F^\bullet_{\uom}},& 										&\text{for all $\uom$,}\\
						\gamma_{\uom_2}^{\uom_3}\circ\gamma_{\uom_1}^{\uom_2}=&\gamma_{\uom_1}^{\uom_3},&	&\text{for all $\uom_1$, $\uom_2$, $\uom_3$.}						
					\end{align*}
			\end{enumerate}
		\end{pro}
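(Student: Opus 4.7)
For part \textit{(i)}, the strategy is direct computation. The complex $F_s\otimes F_s^{-1}$ has three nonzero cohomological degrees, with $B_s(-1)$ in degree $-1$, $B_s\otimes B_s \oplus \un$ in degree $0$, and $B_s(1)$ in degree $1$, where the two differentials are built from the dot maps tensored with appropriate identities (with signs as in \eqref{eq_signconv}). The key input is the decomposition $B_s\otimes B_s\cong B_s(1)\oplus B_s(-1)$ in $\D$, realised by the idempotent given by (trivalent vertex)$\circ$(dual trivalent vertex) and its complement; under this identification the differentials in degrees $\pm 1$ become the obvious identity inclusions/projections into the two summands. This exhibits two contractible subcomplexes whose cancellation leaves $\un$ concentrated in degree $0$, and one writes down explicit dot/trivalent-vertex homotopies to verify the equivalence. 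The argument for $F_s^{-1}F_s$ is symmetric.

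For part \textit{(ii)}, I treat $m_{st}=2$ first: the complexes $F_sF_t$ and $F_tF_s$ have identical underlying bigraded objects because $B_s\otimes B_t\cong B_t\otimes B_s$ canonically (there is nothing to permute among the colors), and the Koszul sign in \eqref{eq_signconv} precisely matches the swap, yielding a strict isomorphism. For general $m=m_{st}$, I would construct a chain map $\phi:\underbrace{F_sF_tF_s\cdots}_{m}\to \underbrace{F_tF_sF_t\cdots}_{m}$ componentwise: the graded pieces in cohomological degree $k$ are direct sums of Bott–Samelson objects indexed by length-$k$ subexpressions of the source word, and these subexpression sets are matched on both sides by the braid isomorphism of Bott–Samelson objects (using the $2m_{st}$-valent vertex for the top-length subexpression, dots and trivalent vertices otherwise). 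The verification that $\phi$ is a chain map reduces to the two-color relations \eqref{2mtriv} and \eqref{eq_2mdot} applied pointwise. One then exhibits its inverse up to homotopy in the same way.

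For part \textit{(iii)}, I invoke a Matsumoto-type statement for the braid monoid: any two braid words $\uom_1,\uom_2$ expressing the same $\omega\in B_W$ are connected by a finite sequence of elementary moves consisting of the braid relations covered by \textit{(ii)} and the cancellations/insertions $\sigma_s\sigma_s^{-1}\leftrightarrow \emptyset$ covered by \textit{(i)}. Each elementary move produces a distinguished homotopy equivalence, so composing gives some $\gamma_{\uom_1}^{\uom_2}\in\Hom(F_{\uom_1}^\bullet,F_{\uom_2}^\bullet)$. To compute the Hom space, pick $\uom_1=\uom_2=\uom$; it then suffices to show $\End_{\Kb(\D)}(F_{\uom}^\bullet)\cong R$, which I would obtain by induction on $\ell(\uom)$, using the adjunction between tensoring with $F_s$ and $F_s^{-1}$ supplied by \textit{(i)} to reduce to $\End(\un[0])=R$. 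For transitivity, any two choices of $\gamma_{\uom_1}^{\uom_2}$ differ by a unit of $R$ (hence a unit of $\kk$), so any candidate system $\{\gamma_{\uom_1}^{\uom_2}\}$ automatically produces only scalar discrepancies around loops; the task is to normalise these scalars to $1$ simultaneously.

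The main obstacle is this last coherence step in \textit{(iii)}: one needs a Matsumoto-2 statement for the braid monoid, identifying a finite list of relations among the elementary moves, and then a direct check that every such relation induces the trivial scalar on the corresponding composition of homotopy equivalences. Once this is verified — essentially by running the diagrammatic isomorphisms around each elementary $2$-cell and reading off a scalar — a straightforward rigidification argument (choose values at a spanning tree of the graph of braid words and propagate) produces the required transitive system.
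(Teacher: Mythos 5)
Your treatment of part \ref{item_propbraid} for general $m=m_{st}\ge 3$ contains a genuine gap. You propose a componentwise chain map in which ``the graded pieces in cohomological degree $k$ \dots are matched on both sides by the braid isomorphism of Bott--Samelson objects''. No such matching exists: the summands of $\grad{(F^\bullet_{\underline{sts\cdots}})}$ in a fixed cohomological degree are the $B_{\boi}$ for subexpressions $\boi$ of $\underline{sts\cdots}$ with the appropriate number of zeros, and these are not pairwise isomorphic to the summands coming from subexpressions of $\underline{tst\cdots}$. Already for $m=3$ the cohomological-degree-one piece of one side contains $B_sB_s\cong B_s(1)\oplus B_s(-1)$ while the other contains $B_tB_t$, so the two complexes have non-isomorphic underlying graded objects and admit no strict isomorphism; moreover the $2m$-valent vertex is not an isomorphism $B_{\underline{sts\cdots}}\to B_{\underline{tst\cdots}}$ in $\DBS$ (the composite of two such vertices is the identity only modulo morphisms factoring through shorter words). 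The map you want is only a homotopy equivalence, its lower-order components necessarily involve dots between genuinely different Bott--Samelson objects, and the two hard points --- that a closed degree-zero map with leading term the $2m$-valent vertex exists at all, and that the two composites are homotopic to the identity --- are exactly what your plan leaves unaddressed. The paper handles both structurally: in the double-leaves basis of $\Homb(F_{\uom_{s,t}},F_{\uom_{t,s}})$ the morphisms factoring through shorter words span a dg submodule $N_{s,t}$ with quotient $R[0]$, and $N_{s,t}$ is shown to be null-homotopic by a filtration whose subquotients are $(R\xrightarrow{\id}R)^{\otimes n}$ (Lemma \ref{lem_N}, keyed to black $U0$'s in the subexpressions); this simultaneously produces $\gamma_{s,t}$, computes the Hom space, and, applied to endomorphisms, gives $\gamma_{t,s}\gamma_{s,t}\simeq\id$ (Proposition \ref{pro_gammast}). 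Nothing in your proposal plays the role of this lemma.

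The rest is essentially sound. Part \ref{item_propinvers} via Gaussian elimination against $B_sB_s\cong B_s(1)\oplus B_s(-1)$ is correct and is precisely what the paper's explicit $\epsilon^{\pm},\eta^{\pm}$ realize diagrammatically. Your computation of $\Hom(F_{\uom}^\bullet,F_{\uom}^\bullet)\cong R$ by invertibility of $-\otimes F_\sigma$ is a legitimate alternative to the paper's double-leaves filtration, though it presupposes part \ref{item_propbraid}; and for the literal statement of transitivity the spanning-tree normalization you describe suffices. The finer coherence of the elementary-move maps, which you correctly flag as delicate, is treated in the paper only by the observation that loops of relations act by units of $R$, hence by $\pm1$ over $\ZZ$, with references to Rouquier and Elias for the full canonicity.
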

		\begin{rmk}
			Let $\mathscr{B}_W:=\langle F_{\sigma}\mid \sigma\in \Sigma \rangle_\otimes$ inside $\Kb(\D)$. Thanks to
			Proposition \ref{pro_braidRou}, the isomorphism classes of this subcategory form at least a quotient 
			of the braid group. In particular the Rouquier complex $F_{\omega}$ associated to 
			$\omega\in B_W$ is well defined up to a canonical isomorphism.
			Rouquier conjectured that this quotient is just $B_W$. This was proven in all finite types by
			Khovanov and Seidel \cite{KhoSei}, Brav and Thomas \cite{BraTho} and Jensen \cite{Jen}. 
		\end{rmk}
		In what follows, we will consider the representatives $\Roudg{\uom}$ in the 
		dg category of complexes $\Cdg(\D)$ and describe morphism spaces between them using Soergel
		diagrams. Then the morphisms
		in the homotopy category are obtained as the cohomology at zero of the morphism complexes 
		in $\Cdg(\D)$.
		In this setting we will recover the fundamental properties of Rouquier 
		complexes, stated in Proposition \ref{pro_braidRou}. This approach will give an
		algorithm to determine explicitly the homotopy equivalence categorifying the braid relation.
		Furthermore we will give a diagrammatic proof of the Rouquier formula (see Corollary \ref{cor_Roufor}).
			\subsection{Notation for graded objects and complexes}
			Given a $\Bbbk$-linear additive category $\mathscr{C}$, a \emph{graded object} is a family $\{A^q\}_{q\in \ZZ}$,
			where $A^q$ is an object of $\mathscr{C}$ for all $q\in \ZZ$. If $A$ is an object of $\mathscr{C}$, then
			$A[-q]$ denotes the graded object whose only nonzero entry is $A$ in degree $q$.
			The direct sum of graded objects is defined in a natural way, so that we can also write:
			\[
				\{A^q\}_{q\in\ZZ}=\bigoplus_{q\in \ZZ} A^q[-q].
			\] 
			A (degree 0) morphism of graded objects $\Ao=\{A^q\}$ and $\Bo=\{B^q\}$ is a family of morphisms:
			\[
				\{f^q:A^q\rightarrow B^{q}\}_{q\in \ZZ}.
			\]
			This defines a category $\mathcal{G}_0(\mathscr{C})$
			of graded objects. The \emph{shift} functor $[1]$ sends the graded object $\{A^q\}_{q\in \ZZ}$ to
			$\{A^{q+1}\}_{q\in \ZZ}$. Then one can define the \emph{graded category} $\mathcal{G}(\mathscr{C})$ with the 
			same objects as $\mathcal{G}_0(\mathscr{C})$ and morphism spaces:
			\[
				\Hom_{\mathcal{G}(\mathscr{C})}(\Ao,\Bo):=\bigoplus_{p\in \ZZ} \Hom_{\mathcal{G}_0(\mathscr{C})}(\Ao,\Bo[p]).
			\]
			The $p$-th graded piece is denoted by $\Hom^p(\Ao,\Bo)$ and its elements are called 
			\emph{homogeneous} of degree $p$. 
			A complex in $\C(\mathscr{C})$ is then determined by a graded object $\Ao$ endowed with an
			endomorphism $d:\Ao\rightarrow \Ao$ of degree 1, such that $d^2=0$.
			Viceversa, let $\grad{(\cdot)}:\C(\mathscr{C})\rightarrow\mathcal{G}(\mathscr{C})$ denote 
			the forgetful functor sending $(\Ao,d)$ to $\Ao$.
			
			The \emph{dg category} of complexes $\Cdg(\CC)$ has the same objects as $\C(\CC)$
			but the morphism space between the complexes $\Ao$ and $\Bo$, is 
			the dg $\kk$-module whose underlying graded module is $\Hom_{\mathcal{G}(\CC)}(\Ao,\Bo)$, 
			and the differential map is:
			\begin{align}\label{eq_curlydiff}
				\Hom^p(\Ao,\Bo) & \longrightarrow \Hom^{p+1}(\Ao,\Bo) \\
				(f^q)_{q\in\mathbb{Z}} & \longmapsto (d_{B}^{q+p}\circ f^{q} - (-1)^p f^{q+1}\circ d_A^{q}). \notag
			\end{align}
			A homogeneous map $f$ is called \emph{closed} if it lies in the kernel of \eqref{eq_curlydiff}
			and \emph{exact} if it lies in the image. It is easy to see that closed degree zero
			maps are precisely morphisms in $\C(\CC)$, and
			exact degree zero maps are null-homotopic morphisms of complexes.
			Hence, the cohomology at zero of the dg module $\Homb(\Ao,\Bo)$ is the space of 
			morphisms in the category $\mathcal{K}(\D)$.
			
			Finally, let $\Gb(\mathscr{C})$, $\Cb(\mathscr{C})$, $\Cdgb(\CC)$ and $\Kb(\CC)$ 
			denote the \emph{bounded} subcategories, in the obvious sense.
			\subsection{A dg monoidal category of Rouquier complexes}
			Consider the Hecke category $\D$ defined in \S\,\ref{subs_defcatHeck}
			and take $\Cdg(\D)$.
			By the definition of $\D$, this is a dg monoidal category and the space 
			$\Hom^\bullet(A^\bullet,B^\bullet)$ has the structure of dg $R$-bimodule, with an additional
			grading, inherited from	the polynomial grading of $\D$. The shift in the polynomial grading 
			is still denoted by $(1)$ and the shift in the cohomological degree by $[1]$, as in the homotopy
			category.
			
			We consider the dg monoidal subcategory generated by the standard and costandard complexes:
			\begin{equation}
				\Adg=\langle F_\sigma \mid \sigma \in \Sigma \rangle_{\oplus,\otimes,[\cdot]}\subset \Cdg(\D).\\
			\end{equation}
			Its objects will be direct sums of shifts of the $\Roudg{\uom}$'s.
			In this section we will adapt the Elias-Williamson presentation to $\Adg$. 
			\subsection{Diagrammatics}\label{subs_heckcat_diagrams}
			For a given subexpression $\boi$ of a braid word $\uom$, let $\uom_{\boi}$ denote the 
			subword corresponding to it. Then let $B_{\boi}$ be the Bott-Samelson object 
			corresponding to the Coxeter projection of $\uom_{\boi}$. For example, given:
			\[
				\uom=\sigma_s\sigma_t\sigma_t\sigma_u^{-1},
			\]
			and $\boi=0101$, we have 
			$\uom_{\boi}=\sigma_t\sigma_u^{-1}$ and then $B_{\boi}=B_{\underline{tu}}=B_tB_u$.
			
			By definition of tensor product of complexes, we have:
			\begin{equation}
				\grad{(\Roudg{\uom})}=\bigoplus_{\boi\in\{0,1\}^{\ell(\uom)}}B_{\boi}\Tate{q_{\boi}},
			\end{equation}
			where $q_{\boi}=\varpi(\uom)-\varpi(\uom_{\boi})$ and $\Tate{1}$ denotes the \emph{Tate twist}
			 $(1)[-1]$ (see \cite[\S\,2.2]{AchRic_modII}).
			 One can then compute the components of the differential map, according to
			 \eqref{eq_signconv}. The only nonzero components $B_{\boi}\rightarrow B_{\boi'}$ are those 
			 for which $\boi'$ is obtained from $\boi$ by changing one symbol in such a way that 
			 $q_{\boi'}=q_{\boi}+1$.
			 In this case the component is either:
			 \begin{align}\label{eq_diffonsubexpr}
			 	&(-1)^k	 	
					\begin{tikzpicture}[x=0.3cm,y=0.4cm,baseline=0.36cm,every node/.style={font=\scriptsize}]
						\draw[gray] (0.5,0)--(9.5,0);\draw[gray] (0.5,2)--(9.5,2);
						\foreach \i in {1,2,4,6,8,9}{%
								\draw[violet] (\i,0)--(\i,2);
							}
						\draw[violet] (5,0)--(5,.6); \fill[violet] (5,.6) circle (1.5pt);
						\node[font=\normalsize] at (3,1) {\dots}; \node[font=\normalsize] at (7,1) {\dots};
					\end{tikzpicture}&
					&\text{or}&
					&(-1)^k
						\begin{tikzpicture}[x=0.3cm,y=0.4cm,baseline=0.36cm,every node/.style={font=\scriptsize}]
							\draw[gray] (0.5,0)--(9.5,0);\draw[gray] (0.5,2)--(9.5,2);
							\foreach \i in {1,2,4,6,8,9}{%
									\draw[violet] (\i,0)--(\i,2);
								}
							\draw[violet] (5,2)--(5,1.4); \fill[violet] (5,1.4) circle (1.5pt);
							\node[font=\normalsize] at (3,1) {\dots}; \node[font=\normalsize] at (7,1) {\dots};
						\end{tikzpicture},&
			 \end{align}
			 where $k$ is the number of 0's in $\boi$ preceding the changed symbol.
			\begin{exa}\label{exa_cube}
				Let $\re{s},\bl{t}\in S$ and 
				$\uom=\sigma_s\sigma_s\sigma_t^{-1}$. Then the complex $\Roudg{\uom}=F_sF_sF_t^{-1}$ 
				is the following:
				\begin{center}
					\begin{tikzpicture}[every pic/.style={scale=0.4,font=\tiny,-}]
						\def \d{3}%
						\def \h{1}%
						\summandnode{ss9}{0}{0}{ss9};
						\summandnode{sst}{\d}{2*\h}{sst};\summandnode{s}{\d}{0}{s1};\summandnode{s}{\d}{-2*\h}{s2};
						\node (s1t1) at (2*\d,2*\h) {$B_sB_t(1)$};
						\node (s2t1) at (2*\d,0) {$B_sB_t(1)$};
						\node (n1) at (2*\d,-2*\h) {$\un(1)$};
						\summandnode{t2}{3*\d}{0}{t2};
						\draw[-latex] (ss9) to %
							pic[midway,yshift=.5cm]{bottomdot={sst}{3}{+}} (sst);%
						\draw[-latex] (ss9) to%
							pic[midway,yshift=.3cm]{topdot={ss}{2}{+}} (s1);%
						\draw[-latex] (ss9) to%
							pic[midway,yshift=-.5cm]{topdot={ss}{1}{+}} (s2);%
						\draw[-latex] (sst) to%
							pic[midway,yshift=.3cm]{topdot={sst}{2}{+}} (s1t1);%
						\draw[-latex] (sst) to%
							pic[pos=.6,xshift=.8cm]{topdot={sst}{1}{+}} (s2t1);%
						\draw[-latex] (s1) to%
							pic[pos=.4,xshift=-.6cm]{bottomdot={st}{2}{-}} (s1t1);%
						\draw[-latex] (s1) to%
							pic[near start,xshift=-.3cm]{topdot={s}{1}{+}} (n1);%
						\draw[-latex] (s2) to%
							pic[pos=.6,xshift=.8cm]{bottomdot={st}{2}{-}} (s2t1);%
						\draw[-latex] (s2) to%
							pic[midway,yshift=-.3cm]{topdot={s}{1}{-}} (n1);%
						\draw[-latex] (s1t1) to%
							pic[midway,yshift=.5cm]{topdot={st}{1}{+}} (t2);%
						\draw[-latex] (s2t1) to%
							pic[midway,yshift=.3cm]{topdot={st}{1}{-}} (t2);%
						\draw[-latex] (n1) to%
							pic[midway,yshift=-.5cm]{bottomdot={t}{1}{+}} (t2);%
						\node[green,font=\small] at (0,-3*\h){-1}; \node[green,font=\small] at (\d,-3*\h){0};%
						\node[green,font=\small] at (2*\d,-3*\h){1}; \node[green,font=\small] at (3*\d,-3*\h) {2};
						\node at (\d,\h) {$\oplus$}; \node at (\d,-\h) {$\oplus$};
						\node at (2*\d,\h) {$\oplus$}; \node at (2*\d,-\h) {$\oplus$};
					\end{tikzpicture}		
				\end{center}			
			\end{exa}	  			 
			 Now let $\uom,\uom'\in \bword$ and consider $\Homb(F_{\uom}^\bullet,F_{\uom'}^\bullet)$. This decomposes as:
			 \begin{equation}\label{eq_morphsummandsubexpr}
			 	\bigoplus_{\boi,\boi'} \Hom_{\D}(B_{\boi},B'_{\boi'})\Tate{q'_{\boi'}-q_{\boi}},
			 \end{equation}
			 for an analogous definition of $B'_{\boi'}$ and $q'_{\boi'}$.
			 Each of the summands of \eqref{eq_morphsummandsubexpr} is a morphism space in the 
			 diagrammatic Hecke category 
			 $\D$, so its
			 elements are linear combinations of Soergel diagrams as described in \S\,\ref{subs_defcatHeck}.
			 Hence an element of $\Homb(F_{\uom}^\bullet,F_{\uom'}^\bullet)$ is also a $\kk$-linear combination of 
			 diagrams each of which lives in one of the summands above. To keep track of
			 this information we write it as a linear combination of \emph{dg diagrams}, or \emph{diagrams with patches}. 
			 A dg diagram is defined as follows, use Example \ref{exa_dgdiag} below as a reference.
			\begin{enumerate}
				\item We draw two boundary lines and we arrange the letters of $\uom$ on the bottom one
					and those of $\uom'$ on the top.
					We use different styles for the boundary line according to the sign of the 
					letters:
					\begin{itemize}
						\item the black line is for positive letters on the bottom boundary or negative 
							letters on the top boundary;
						\item the white line is for all the other ones.
					\end{itemize}
					The reason for which we invert the colors on the top boundary will be clear very soon.
					For the empty word we only draw a normal line.
				\item The letters of the starting and ending words correspond to the $1$'s in the 
					subexpressions
					$\boi$ and $\boi'$, so we cover with a \emph{patch} 
					\begin{tikzpicture}
						\pic[violet] at (0,0) {patch={1}};
					\end{tikzpicture}
					every letter corresponding to a $0$.
				\item A dg diagram is then a usual Soergel diagram whose boundary points are those
					which are not covered by patches.
			\end{enumerate}
			\begin{exa}\label{exa_dgdiag}
				Let $\uom=\sigma_{\re{s}}^3\sigma_{\bl{t}}^{-1}\sigma_{\gr{u}}^{-1}\sigma_{\bl{t}}$,
				and $\uom'=\sigma_{\re{s}}\sigma_{\bl{t}}^{-2}\sigma_{\gr{u}}^2$.
				The following is a dg diagram representing a morphism in the summand 
				$\Hom_{\D}(B_{011111},B'_{10101})$.
				\begin{equation}\label{eq_expatches}
					\begin{tikzpicture}[x=.7cm,baseline=.7cm]
						\node[anchor=north] (aa) at (0,0) {};
						\node[anchor=north] (a) at (1,0) {};
						\node[anchor=north] (b) at (2,0) {};
						\node[anchor=north] (c) at (3,0) {};
						\node[anchor=north] (d) at (4,0) {};
						\node[anchor=north] (e) at (5,0) {};
						\node[anchor=south] (A) at (.5,2) {};
						\node[anchor=south] (BB) at (1.5,2) {};
						\node[anchor=south] (B) at (2.5,2) {};
						\node[anchor=south] (CC) at (3.5,2) {};
						\node[anchor=south] (C) at (4.5,2) {};
						\node[font=\small,anchor=south] at (0,-.53) {$\sigma_{\re{s}}$};
						\node[font=\small,anchor=south] at (1,-.53) {$\sigma_{\re{s}}$};
						\node[font=\small,anchor=south] at (2,-.53) {$\sigma_{\re{s}}$};
						\node[font=\small,anchor=south] at (3,-.53) {$\sigma_{\bl{t}}^{-1}$};
						\node[font=\small,anchor=south] at (4,-.53) {$\sigma_{\gr{u}}^{-1}$};
						\node[font=\small,anchor=south] at (5,-.53) {$\sigma_{\bl{t}}$};
						\node[font=\small,anchor=south] at (.5,2.1) {$\sigma_{\re{s}}$};
						\node[font=\small,anchor=south] at (1.5,2.1) {$\sigma_{\bl{t}}^{-1}$};
						\node[font=\small,anchor=south] at (2.5,2.1) {$\sigma_{\bl{t}}^{-1}$};
						\node[font=\small,anchor=south] at (3.5,2.1) {$\sigma_{\gr{u}}$};
						\node[font=\small,anchor=south] at (4.5,2.1) {$\sigma_{\gr{u}}$};
						\draw[red] (b)..controls (2,.5) and (0.5,1.5)..(A);
						\draw[red] (a)--(1,.5);\fill[red] (1,.5) circle (1.5pt);
						\draw[blue](e)..controls (5,.5) and (2.5,1.5)..node[shape=coordinate] (mid) {} (B);
						\draw[blue] (c)..controls (3,.5) and (3.5,.6)..(mid);
						\draw[green](d)--(4,.5);\fill[green] (4,.5)circle(1.5pt);
						\draw[green](C)--(4.5,1.5);\fill[green] (4.5,1.5)circle(1.5pt);
						\node at (2.5,1) {$\alpha_{\gr{u}}$};
						\draw[positive] (-0.5,0)--(2.5,0);
						\draw[negative] (2.5,0)--(4.5,0);
						\draw[positive] (4.5,0)--(5.5,0);
						\draw[negative] (-0.5,2)--(1,2);
						\draw[positive] (1,2)--(3,2);
						\draw[negative] (3,2)--(5.5,2);
						\pic[red] at (0,0) {patch={1}};	\pic[blue] at (1.5,2) {patch={1}};
						\pic[green] at (3.5,2) {patch={1}};
					\end{tikzpicture}			
				\end{equation}			
			\end{exa}
			\begin{rmk}\label{rmk_ondgdiagrams}
				\begin{enumerate}
					\item We recover the cohomological degree $p$ of a morphism from the patches:
						each patch counts $+1$ on the white line and $-1$ on the black line.
					\item The polynomial degree of a diagram is $d-p$, if $d$ is its degree as a Soergel diagram 
					 	(as described in \S\,\ref{subs_defcatHeck}) and $p$ is its cohomological degree. This is due to the
					 	Tate twists in \eqref{eq_morphsummandsubexpr}.
				\end{enumerate}
			\end{rmk}
			We can compute the differential map on the diagram according to \eqref{eq_curlydiff}
			and \eqref{eq_diffonsubexpr}.
	 		Consider the following operations.
	 		\begin{dfn}
	 			We call \emph{dot-sprouting} the operation of replacing a patch on the black boundary 
	 			with a boundary dot and \emph{strand-uprooting} the operation of covering a white boundary 
	 			point with a patch and cutting the corresponding strand with a dot.
				\begin{align*}
					&\text{\emph{Dot-sprouting}:}&
					&\begin{tikzpicture}[x=.5cm,baseline=.2cm]
						\draw[positive] (0,0)--(1,0);
						\pic[violet] at (.5,0) {patch={1}};
					\end{tikzpicture}
					\rightsquigarrow
					\begin{tikzpicture}[x=.5cm,baseline=.2cm]
						\pic[violet,y=.5cm] at (.5,0) {dot={1cm}};
						\draw[positive] (0,0)--(1,0);
					\end{tikzpicture}&
					&\begin{tikzpicture}[x=.5cm,baseline=-.3cm]
						\draw[positive] (0,0)--(1,0);
						\pic[violet] at (.5,0) {patch={1}};
					\end{tikzpicture}
					\rightsquigarrow
					\begin{tikzpicture}[x=.5cm,baseline=-.3cm]
						\pic[violet,yscale=-1,y=.5cm] at (.5,0) {dot={1cm}};
						\draw[positive] (0,0)--(1,0);
					\end{tikzpicture} \\[1em]
					&\text{\emph{Strand-uprooting:}}&
					&\begin{tikzpicture}[x=.5cm,baseline=.3cm]
						\draw[violet] (.5,0)--(.5,.5);
						\draw[violet,dashed] (.5,.5)--(.5,1);
						\draw[negative] (0,0)--(1,0);
					\end{tikzpicture}
					\rightsquigarrow
					\begin{tikzpicture}[x=.5cm,baseline=.3cm]
						\fill[violet] (.5,.3) circle (1.5pt);
						\draw[violet] (.5,.3)--(.5,.5);
						\draw[violet,dashed] (.5,.5)--(.5,1);
						\draw[negative] (0,0)--(1,0);
						\pic[violet] at (.5,0) {patch={1}};
					\end{tikzpicture}&
					&\begin{tikzpicture}[x=.5cm,baseline=-.5cm,yscale=-1]
						\draw[violet] (.5,0)--(.5,.5);
						\draw[violet,dashed] (.5,.5)--(.5,1);
						\draw[negative] (0,0)--(1,0);
					\end{tikzpicture}
					\rightsquigarrow
					\begin{tikzpicture}[x=.5cm,baseline=-.5cm,yscale=-1]
						\fill[violet] (.5,.3) circle (1.5pt);
						\draw[violet] (.5,.3)--(.5,.5);
						\draw[violet,dashed] (.5,.5)--(.5,1);
						\draw[negative] (0,0)--(1,0);
						\pic[violet] at (.5,0) {patch={1}};
					\end{tikzpicture}
				\end{align*}					 	
				We emphasize that we only allow the dot-sprouting operation on the black boundary, 
				and the	strand-uprooting on the white boundary.	
	 		\end{dfn}
			Then the image of a diagram via the differential map 
			is the linear combination of all diagrams obtained by one of the above operations, 
			with coefficients $\pm 1$ according to \eqref{eq_diffonsubexpr}. 
			More precisely, when acting on the top boundary, if $k$ is the number 
			of patches on the left of the sprouted dot/uprooted strand, the sign is 
			$(-1)^k$. The same rule applies to the bottom boundary but we also need to multiply 
			by an additional $(-1)^{p+1}$ where $p$ is the cohomological degree of the 
			initial morphism. 
			\begin{exa}
			The image via the differential map of \eqref{eq_expatches} is the combination below.
			\begin{multline*}
				d \left(%
				\begin{tikzpicture}[x=.3cm,y=.7cm,baseline=.6cm]
					\node[red,anchor=north] (aa) at (0,0) {};
					\node[red,anchor=north] (a) at (1,0) {};
					\node[red,anchor=north] (b) at (2,0) {};
					\node[blue,anchor=north] (c) at (3,0) {};
					\node[green,anchor=north] (d) at (4,0) {};
					\node[blue,anchor=north] (e) at (5,0) {};
					\node[red,anchor=south] (A) at (.5,2) {};
					\node[red,anchor=south] (BB) at (1.5,2) {};
					\node[blue,anchor=south] (B) at (2.5,2) {};
					\node[green,anchor=south] (CC) at (3.5,2) {};
					\node[red,anchor=south] (C) at (4.5,2) {};
					\draw[red] (b)..controls (2,.5) and (0.5,1.5)..(A);
					\draw[red] (a)--(1,.5);\fill[red] (1,.5) circle (1.5pt);
					\draw[blue](e)..controls (5,.5) and (2.5,1.5)..node[shape=coordinate] (mid) {} (B);
					\draw[blue] (c)..controls (3,.5) and (3.5,.6)..(mid);
					\draw[green](d)--(4,.5);\fill[green] (4,.5)circle(1.5pt);
					\draw[green](C)--(4.5,1.5);\fill[green] (4.5,1.5)circle(1.5pt);
					\node at (2.5,1) {$\alpha_{\gr{u}}$};
					\draw[positive] (-0.5,0)--(2.5,0);
					\draw[negative] (2.5,0)--(4.5,0);
					\draw[positive] (4.5,0)--(5.5,0);
					\draw[negative] (-0.5,2)--(1,2);
					\draw[positive] (1,2)--(3,2);
					\draw[negative] (3,2)--(5.5,2);
					\pic[red,scale=.7,transform shape] at (0,0) {patch={1}};	
					\pic[blue,scale=.7,transform shape] at (1.5,2) {patch={1}};
					\pic[green,scale=.7,transform shape] at (3.5,2) {patch={1}};
				\end{tikzpicture}			
				\right)
				=
				\begin{tikzpicture}[x=.3cm,y=.7cm,baseline=.6cm]
					\node[red,anchor=north] (aa) at (0,0) {};
					\node[red,anchor=north] (a) at (1,0) {};
					\node[red,anchor=north] (b) at (2,0) {};
					\node[blue,anchor=north] (c) at (3,0) {};
					\node[green,anchor=north] (d) at (4,0) {};
					\node[blue,anchor=north] (e) at (5,0) {};
					\node[red,anchor=south] (A) at (.5,2) {};
					\node[red,anchor=south] (BB) at (1.5,2) {};
					\node[blue,anchor=south] (B) at (2.5,2) {};
					\node[green,anchor=south] (CC) at (3.5,2) {};
					\node[red,anchor=south] (C) at (4.5,2) {};
					\draw[red] (b)..controls (2,.5) and (0.5,1.2)..(.5,1.7);
					\fill[red] (.5,1.7) circle (1.5pt);
					\draw[red] (a)--(1,.5);\fill[red] (1,.5) circle (1.5pt);
					\draw[blue](e)..controls (5,.5) and (2.5,1.5)..node[shape=coordinate] (mid) {} (B);
					\draw[blue] (c)..controls (3,.5) and (3.5,.6)..(mid);
					\draw[green](d)--(4,.5);\fill[green] (4,.5)circle(1.5pt);
					\draw[green](C)--(4.5,1.5);\fill[green] (4.5,1.5)circle(1.5pt);
					\node at (2.5,1) {$\alpha_{\gr{u}}$};
					\draw[positive] (-0.5,0)--(2.5,0);
					\draw[negative] (2.5,0)--(4.5,0);
					\draw[positive] (4.5,0)--(5.5,0);
					\draw[negative] (-0.5,2)--(1,2);
					\draw[positive] (1,2)--(3,2);
					\draw[negative] (3,2)--(5.5,2);
					\pic[red,scale=.7,transform shape] at (0,0) {patch={1}};	
					\pic[blue,scale=.7,transform shape] at (1.5,2) {patch={1}};
					\pic[green,scale=.7,transform shape] at (3.5,2) {patch={1}};
					\pic[red,scale=.7,transform shape] at (.5,2) {patch={1}};	
				\end{tikzpicture}			
				+
				\begin{tikzpicture}[x=.3cm,y=.7cm,baseline=.6cm]
					\node[red,anchor=north] (aa) at (0,0) {};
					\node[red,anchor=north] (a) at (1,0) {};
					\node[red,anchor=north] (b) at (2,0) {};
					\node[blue,anchor=north] (c) at (3,0) {};
					\node[green,anchor=north] (d) at (4,0) {};
					\node[blue,anchor=north] (e) at (5,0) {};
					\node[red,anchor=south] (A) at (.5,2) {};
					\node[red,anchor=south] (BB) at (1.5,2) {};
					\node[blue,anchor=south] (B) at (2.5,2) {};
					\node[green,anchor=south] (CC) at (3.5,2) {};
					\node[red,anchor=south] (C) at (4.5,2) {};
					\draw[red] (b)..controls (2,.5) and (0.5,1.5)..(A);
					\draw[red] (a)--(1,.5);\fill[red] (1,.5) circle (1.5pt);
					\draw[blue](e)..controls (5,.5) and (2.5,1.5)..node[shape=coordinate] (mid) {} (B);
					\draw[blue] (c)..controls (3,.5) and (3.5,.6)..(mid);
					\draw[green](d)--(4,.5);\fill[green] (4,.5)circle(1.5pt);
					\draw[green](C)--(4.5,1.5);\fill[green] (4.5,1.5)circle(1.5pt);
					\draw[blue] (1.5,2)--(1.5,1.5);\fill[blue] (1.5,1.5) circle (1.5pt);
					\node at (2.5,1) {$\alpha_{\gr{u}}$};
					\draw[positive] (-0.5,0)--(2.5,0);
					\draw[negative] (2.5,0)--(4.5,0);
					\draw[positive] (4.5,0)--(5.5,0);
					\draw[negative] (-0.5,2)--(1,2);
					\draw[positive] (1,2)--(3,2);
					\draw[negative] (3,2)--(5.5,2);
					\pic[red,scale=.7,transform shape] at (0,0) {patch={1}};	
					\pic[green,scale=.7,transform shape] at (3.5,2) {patch={1}};
				\end{tikzpicture}			
				+
				\begin{tikzpicture}[x=.3cm,y=.7cm,baseline=.6cm]
					\node[red,anchor=north] (aa) at (0,0) {};
					\node[red,anchor=north] (a) at (1,0) {};
					\node[red,anchor=north] (b) at (2,0) {};
					\node[blue,anchor=north] (c) at (3,0) {};
					\node[green,anchor=north] (d) at (4,0) {};
					\node[blue,anchor=north] (e) at (5,0) {};
					\node[red,anchor=south] (A) at (.5,2) {};
					\node[red,anchor=south] (BB) at (1.5,2) {};
					\node[blue,anchor=south] (B) at (2.5,2) {};
					\node[green,anchor=south] (CC) at (3.5,2) {};
					\node[red,anchor=south] (C) at (4.5,2) {};
					\draw[red] (b)..controls (2,.5) and (0.5,1.5)..(A);
					\draw[red] (a)--(1,.5);\fill[red] (1,.5) circle (1.5pt);
					\draw[blue](e)..controls (5,.5) and (2.5,1.5)..node[shape=coordinate] (mid) {} (B);
					\draw[blue] (c)..controls (3,.5) and (3.5,.6)..(mid);
					\draw[green](d)--(4,.5);\fill[green] (4,.5)circle(1.5pt);
					\fill[green] (4.5,1.7) circle (1.5pt); \draw[green](4.5,1.7)--(4.5,1.2);\fill[green] (4.5,1.2)circle(1.5pt);
					\node at (2.5,1) {$\alpha_{\gr{u}}$};
					\draw[positive] (-0.5,0)--(2.5,0);
					\draw[negative] (2.5,0)--(4.5,0);
					\draw[positive] (4.5,0)--(5.5,0);
					\draw[negative] (-0.5,2)--(1,2);
					\draw[positive] (1,2)--(3,2);
					\draw[negative] (3,2)--(5.5,2);
					\pic[red,scale=.7,transform shape] at (0,0) {patch={1}};	
					\pic[blue,scale=.7,transform shape] at (1.5,2) {patch={1}};
					\pic[green,scale=.7,transform shape] at (4.5,2) {patch={1}};
					\pic[green,scale=.7,transform shape] at (3.5,2) {patch={1}};
				\end{tikzpicture}			
				+\\+
				\begin{tikzpicture}[x=.3cm,y=.7cm,baseline=.6cm]
					\node[red,anchor=north] (aa) at (0,0) {};
					\node[red,anchor=north] (a) at (1,0) {};
					\node[red,anchor=north] (b) at (2,0) {};
					\node[blue,anchor=north] (c) at (3,0) {};
					\node[green,anchor=north] (d) at (4,0) {};
					\node[blue,anchor=north] (e) at (5,0) {};
					\node[red,anchor=south] (A) at (.5,2) {};
					\node[red,anchor=south] (BB) at (1.5,2) {};
					\node[blue,anchor=south] (B) at (2.5,2) {};
					\node[green,anchor=south] (CC) at (3.5,2) {};
					\node[red,anchor=south] (C) at (4.5,2) {};
					\draw[red] (b)..controls (2,.5) and (0.5,1.5)..(A);
					\draw[red] (a)--(1,.5);\fill[red] (1,.5) circle (1.5pt);
					\draw[blue](e)..controls (5,.5) and (2.5,1.5)..node[shape=coordinate] (mid) {} (B);
					\draw[blue] (c)..controls (3,.5) and (3.5,.6)..(mid);
					\draw[green](d)--(4,.5);\fill[green] (4,.5)circle(1.5pt);
					\draw[green](C)--(4.5,1.5);\fill[green] (4.5,1.5)circle(1.5pt);
					\draw[red] (0,0)--(0,0.5);\fill[red] (0,0.5) circle (1.5pt);
					\node at (2.5,1) {$\alpha_{\gr{u}}$};
					\draw[positive] (-0.5,0)--(2.5,0);
					\draw[negative] (2.5,0)--(4.5,0);
					\draw[positive] (4.5,0)--(5.5,0);
					\draw[negative] (-0.5,2)--(1,2);
					\draw[positive] (1,2)--(3,2);
					\draw[negative] (3,2)--(5.5,2);
					\pic[blue,scale=.7,transform shape] at (1.5,2) {patch={1}};
					\pic[green,scale=.7,transform shape] at (3.5,2) {patch={1}};
				\end{tikzpicture}			
				-
				\begin{tikzpicture}[x=.3cm,y=.7cm,baseline=.6cm]
					\node[red,anchor=north] (aa) at (0,0) {};
					\node[red,anchor=north] (a) at (1,0) {};
					\node[red,anchor=north] (b) at (2,0) {};
					\node[blue,anchor=north] (c) at (3,0) {};
					\node[green,anchor=north] (d) at (4,0) {};
					\node[blue,anchor=north] (e) at (5,0) {};
					\node[red,anchor=south] (A) at (.5,2) {};
					\node[red,anchor=south] (BB) at (1.5,2) {};
					\node[blue,anchor=south] (B) at (2.5,2) {};
					\node[green,anchor=south] (CC) at (3.5,2) {};
					\node[red,anchor=south] (C) at (4.5,2) {};
					\draw[red] (b)..controls (2,.5) and (0.5,1.5)..(A);
					\draw[red] (a)--(1,.5);\fill[red] (1,.5) circle (1.5pt);
					\draw[blue](e)..controls (5,.5) and (2.5,1.5)..node[shape=coordinate] (mid) {} (B);
					\draw[blue] (3,.3)..controls (3,.55) and (3.5,.6)..(mid);
					\fill[blue] (3,.3) circle (1.5pt);
					\draw[green](d)--(4,.5);\fill[green] (4,.5)circle(1.5pt);
					\draw[green](C)--(4.5,1.5);\fill[green] (4.5,1.5)circle(1.5pt);
					\node at (2.5,1) {$\alpha_{\gr{u}}$};
					\draw[positive] (-0.5,0)--(2.5,0);
					\draw[negative] (2.5,0)--(4.5,0);
					\draw[positive] (4.5,0)--(5.5,0);
					\draw[negative] (-0.5,2)--(1,2);
					\draw[positive] (1,2)--(3,2);
					\draw[negative] (3,2)--(5.5,2);
					\pic[red,scale=.7,transform shape] at (0,0) {patch={1}};	
					\pic[blue,scale=.7,transform shape] at (1.5,2) {patch={1}};
					\pic[blue,scale=.7,transform shape] at (3,0) {patch={1}};
					\pic[green,scale=.7,transform shape] at (3.5,2) {patch={1}};
				\end{tikzpicture}			
				-
				\begin{tikzpicture}[x=.3cm,y=.7cm,baseline=.6cm]
					\node[red,anchor=north] (aa) at (0,0) {};
					\node[red,anchor=north] (a) at (1,0) {};
					\node[red,anchor=north] (b) at (2,0) {};
					\node[blue,anchor=north] (c) at (3,0) {};
					\node[green,anchor=north] (d) at (4,0) {};
					\node[blue,anchor=north] (e) at (5,0) {};
					\node[red,anchor=south] (A) at (.5,2) {};
					\node[red,anchor=south] (BB) at (1.5,2) {};
					\node[blue,anchor=south] (B) at (2.5,2) {};
					\node[green,anchor=south] (CC) at (3.5,2) {};
					\node[red,anchor=south] (C) at (4.5,2) {};
					\draw[red] (b)..controls (2,.5) and (0.5,1.5)..(A);
					\draw[red] (a)--(1,.5);\fill[red] (1,.5) circle (1.5pt);
					\draw[blue](e)..controls (5,.5) and (2.5,1.5)..node[shape=coordinate] (mid) {} (B);
					\draw[blue] (c)..controls (3,.5) and (3.5,.6)..(mid);
					\fill[green] (4,.3)circle (1.5pt);\draw[green](4,.3)--(4,.55);\fill[green] (4,.55)circle(1.5pt);
					\draw[green](C)--(4.5,1.5);\fill[green] (4.5,1.5)circle(1.5pt);
					\node at (2.5,1) {$\alpha_{\gr{u}}$};
					\draw[positive] (-0.5,0)--(2.5,0);
					\draw[negative] (2.5,0)--(4.5,0);
					\draw[positive] (4.5,0)--(5.5,0);
					\draw[negative] (-0.5,2)--(1,2);
					\draw[positive] (1,2)--(3,2);
					\draw[negative] (3,2)--(5.5,2);
					\pic[red,scale=.7,transform shape] at (0,0) {patch={1}};	
					\pic[blue,scale=.7,transform shape] at (1.5,2) {patch={1}};
					\pic[green,scale=.7,transform shape] at (4,0) {patch={1}};
					\pic[green,scale=.7,transform shape] at (3.5,2) {patch={1}};
				\end{tikzpicture}			
			\end{multline*}
			In fact, we have three possibilities on top (uprooting the first or the last strand, or sprouting a dot over the blue patch)
			and three for the bottom (sprouting a dot on the red patch, or uprooting the middle blue strand or the green strand).
			Here the initial morphism has cohomological degree 1, so the additional sign term $(-1)^{p+1}$ 
			for the bottom boundary	is just $+1$.			
			\end{exa}
			Two dg diagrams $D_1$ and $D_2$ can be composed if the arrangement of letters of the target of
			$D_1$ is the same as the source of $D_2$ (notice that, by our convention, this means that
			the black or white styles are complementary). The composition will be zero unless
			the arrangement of patches along the gluing line is the same in $D_1$ and in $D_2$,
			because the target summand $B_{\boi'}$ of $D_1$ has to match with the source summand
			of $B_{\boi}$ of $D_2$.
			
			Recall that the monoidal structure of $\Cdg(\D)$ respects the Koszul rule, so
			the tensor product of two dg diagrams $D_1$ and $D_2$ is obtained by gluing them horizontally and multiplying by
			the sign $(-1)^{pq}$ where $p$ is the cohomological degree of $D_2$ and $q$ is the number of bottom patches of $D_1$.
			
			The identity morphism of any $\Roudg{\uom}$ is $\sum_{\boi}\id_{B_{\boi}}$,
			so it is represented by the sum of all diagrams with the same arrangements of patches on top and on bottom and consisting 
			of parallel vertical lines. For example, if $\uom=\sigma_{\re{s}}^2\sigma_{\bl{t}}^{-1}$, then:
			\[
				\id_{\Roudg{\uom}}=
				\begin{tikzpicture}[x=.3cm,y=.6cm,baseline=.5cm]
					\draw[red] (1,0)--(1,2);\draw[red](2,0)--(2,2);\draw[blue](3,0)--(3,2);
					\draw[positive] (.5,0)--(2.5,0);\draw[negative] (.5,2)--(2.5,2);
					\draw[negative] (2.5,0)--(3.5,0);\draw[positive] (2.5,2)--(3.5,2);
				\end{tikzpicture}+
				\begin{tikzpicture}[x=.3cm,y=.6cm,baseline=.5cm]
					\draw[red](2,0)--(2,2);\draw[blue](3,0)--(3,2);
					\draw[positive] (.5,0)--(2.5,0);\draw[negative] (.5,2)--(2.5,2);
					\draw[negative] (2.5,0)--(3.5,0);\draw[positive] (2.5,2)--(3.5,2);
					\foreach \i in {0,2}{
						\pic[red,scale=.5,transform shape] at (1,\i) {patch={1}};
						}
				\end{tikzpicture}+
				\begin{tikzpicture}[x=.3cm,y=.6cm,baseline=.5cm]
					\draw[red] (1,0)--(1,2);\draw[blue](3,0)--(3,2);
					\draw[positive] (.5,0)--(2.5,0);\draw[negative] (.5,2)--(2.5,2);
					\draw[negative] (2.5,0)--(3.5,0);\draw[positive] (2.5,2)--(3.5,2);
					\foreach \i in {0,2}{
						\pic[red,scale=.5,transform shape] at (2,\i) {patch={1}};
						}
				\end{tikzpicture}+
				\begin{tikzpicture}[x=.3cm,y=.6cm,baseline=.5cm]
					\draw[red] (1,0)--(1,2);\draw[red](2,0)--(2,2);
					\draw[positive] (.5,0)--(2.5,0);\draw[negative] (.5,2)--(2.5,2);
					\draw[negative] (2.5,0)--(3.5,0);\draw[positive] (2.5,2)--(3.5,2);
					\foreach \i in {0,2}{
						\pic[blue,scale=.5,transform shape] at (3,\i) {patch={1}};
						}
				\end{tikzpicture}+
				\begin{tikzpicture}[x=.3cm,y=.6cm,baseline=.5cm]
					\draw[blue](3,0)--(3,2);
					\draw[positive] (.5,0)--(2.5,0);\draw[negative] (.5,2)--(2.5,2);
					\draw[negative] (2.5,0)--(3.5,0);\draw[positive] (2.5,2)--(3.5,2);
					\foreach \i in {0,2}{
						\pic[red,scale=.5,transform shape] at (1,\i) {patch={1}};
						\pic[red,scale=.5,transform shape] at (2,\i) {patch={1}};
						}
				\end{tikzpicture}+
				\begin{tikzpicture}[x=.3cm,y=.6cm,baseline=.5cm]
					\draw[red](2,0)--(2,2);
					\draw[positive] (.5,0)--(2.5,0);\draw[negative] (.5,2)--(2.5,2);
					\draw[negative] (2.5,0)--(3.5,0);\draw[positive] (2.5,2)--(3.5,2);
					\foreach \i in {0,2}{
						\pic[red,scale=.5,transform shape] at (1,\i) {patch={1}};
						\pic[blue,scale=.5,transform shape] at (3,\i) {patch={1}};
						}
				\end{tikzpicture}+
				\begin{tikzpicture}[x=.3cm,y=.6cm,baseline=.5cm]
					\draw[red] (1,0)--(1,2);
					\draw[positive] (.5,0)--(2.5,0);\draw[negative] (.5,2)--(2.5,2);
					\draw[negative] (2.5,0)--(3.5,0);\draw[positive] (2.5,2)--(3.5,2);
					\foreach \i in {0,2}{
						\pic[red,scale=.5,transform shape] at (2,\i) {patch={1}};
						\pic[blue,scale=.5,transform shape] at (3,\i) {patch={1}};
						}
				\end{tikzpicture}+
				\begin{tikzpicture}[x=.3cm,y=.6cm,baseline=.5cm]
					\draw[positive] (.5,0)--(2.5,0);\draw[negative] (.5,2)--(2.5,2);
					\draw[negative] (2.5,0)--(3.5,0);\draw[positive] (2.5,2)--(3.5,2);
					\foreach \i in {0,2}{
						\pic[red,scale=.5,transform shape] at (1,\i) {patch={1}};
						\pic[red,scale=.5,transform shape] at (2,\i) {patch={1}};
						\pic[blue,scale=.5,transform shape] at (3,\i) {patch={1}};
						}
				\end{tikzpicture}.
			\]
			One can check that this is a closed degree
			zero morphism which is the neutral element of composition.
		\subsection{DG double leaves}
			By Theorem \ref{thm_doubleleaves}, each summand 
			of \eqref{eq_morphsummandsubexpr} is a free 
			$R$-module with a basis given by (a choice of) double leaves maps. 
			Collecting all these bases 
			together, we get a basis for the whole morphism space. 
			Recall that once we have fixed $\boi$ and $\boi'$ (that is, the source and target summands)
			the double leaves maps are labeled by pairs of subexpressions $\boe$ and $\boe'$ of $\uom_{\boi}$ and 
			of $\uom'_{\boi'}$ respectively. 
			It is convenient to introduce a uniform labeling. 

			An \emph{enriched subexpression}
			of a braid word $\uom\in\bword$ is an element of $\{\emptyset,0,1\}^{\ell(\uom)}$.			
			If $\uw$ is the Coxeter projection of $\uom$, then $\uw^\epsilon$ denotes the element
			of $W$ expressed by $\epsilon$, by considering each $\emptyset$ as $0$.

			We can label all the double leaves maps in a basis as above by pairs $(\epsilon,\epsilon')$
			of enriched subexpressions, where each entry $\epsilon_k$ of $\epsilon$ (resp.\ $\epsilon_k'$ of $\epsilon'$)
			is either $\emptyset$, when there is a patch, or the corresponding entry of $\boe$ (resp.\ $\boe'$).
			\begin{exa}
			Notice that the morphism \eqref{eq_expatches}, 
			if we forget the $\alpha_u$, is actually a double leaves map. The source and the target summands
			are determined by the subexpressions $\boi=011111$ and $\boi'=10101$, whereas the two light leaves maps correspond to
			the subexpressions $\boe=01100$ of $\underline{sstut}$ and 110 of $\underline{stu}$. We can determine the corresponding enriched subexpressions:
			\[
				\begin{array}{c|cccccc}
					\boi		& 0			& 1	& 1	& 1	& 1	& 1 \\
					\boe		& \cdot		& 0	& 1	& 1	& 0	& 0 \\
					\hline
					\epsilon	& \emptyset	& 0	& 1	& 1 & 0	& 0
				\end{array}
				\qquad
				\begin{array}{c|ccccc}
					\boi'		& 1	& 0			& 1 & 0			& 1\\
					\boe'		& 1	& \cdot		& 1	& \cdot		& 0 \\
					\hline
					\epsilon'	& 1	& \emptyset	& 1	& \emptyset	& 0
				\end{array}			
			\]
			\end{exa}			
			One can define Bruhat strolls and decorations for enriched subexpressions
			by simply considering $\emptyset$ as a 0. The exact same definition of path dominance order 
			thus gives that of \emph{path dominance preorder} $\lesssim$
			on enriched subexpressions. The relation is not antisymmetric any more (the Bruhat stroll does not see the difference between $\emptyset$ and 0) so the 
			relation defined by $\epsilon\sim\zeta$ if and only if $\epsilon\lesssim\zeta$ and $\zeta\lesssim\epsilon$ is an equivalence relation that we call
			\emph{path equivalence}. Notice that $\epsilon\sim\zeta$ implies that if $\epsilon_k\neq\zeta_k$ then $\epsilon_k,\zeta_k\in\{0,\emptyset\}$.

			One can apply localization to any morphism $\phi$ in \eqref{eq_morphsummandsubexpr}.
			First one chooses a component in $\Hom(B_{\boi},B_{\boi'})$, then
			one splits it in $\D_Q$, as explained in \S \ref{subs_localiz}, into 
			morphisms $Q_{\bof}\rightarrow Q_{\bof'}$ corresponding to subexpressions $\bof$ and $\bof'$ of the subwords $\uom_{\boi}$ and
			$\uom_{\boi'}$ respectively. 
			Again it is convenient to use enriched subexpressions to encode these two choices at once.
			If $\eta$ (resp.\ $\eta'$) is the enriched subexpression of $\uom$ (resp.\ $\uom'$) corresponding with
			$\boi$ and $\bof$ (resp.\ $\boi'$ and $\bof'$) then let $Q_\eta:=Q_{\bof}$ 
			(resp.\ $Q_{\eta'}:=Q_{\bof'}$) and $\phi^{\eta,\eta'}:Q_{\eta}\rightarrow Q_{\eta'}$
			be the morphism obtained by first taking the component of $\phi$ in $\Hom(B_{\boi},B_{\boi'})$ and
			then localizing with respect to $\bof$ and $\bof'$. 

			We have the following analogue of Proposition \ref{pro_uppertriang}.
			\begin{pro}\label{pro_uppertriangext}
				Let $\eta$ and $\eta'$ be subexpressions of $\uom$ and $\uom'$ respectively.
				Let $\uw$ and $\uw'$ be the respective Coxeter projections of the latter.
				The component $(L_{\epsilon,\epsilon'})^{\eta,\eta'}\in \Hom(Q_{\eta},Q_{\eta'})$ obtained by localization satisfies:
				\begin{enumerate}
					\item $(L_{\epsilon,\epsilon'})^{\eta,\eta'}=0$ unless $\uw^{\eta}= (\uw')^{\eta'}$, and both $\eta\lesssim \epsilon$ and $\eta'\lesssim\epsilon'$;
					\item $(L_{\epsilon,\epsilon'})^{\eta,\eta'}\neq 0$ if $\epsilon=\eta$ and $\epsilon'=\eta'$.
				\end{enumerate}				
			\end{pro}
			\begin{proof}
				It is enough to notice that $(L_{\epsilon,\epsilon'})^{\eta,\eta'}=0$ unless the $\epsilon$ and $\eta$ have the same $\emptyset$'s, namely
				$\epsilon_k=\emptyset \Leftrightarrow \eta_k=\emptyset$, and the same condition holds for $\epsilon'$ and $\eta'$. In this case
				$\eta\lesssim\epsilon$ is the same as $\bof\le\boe$ for the corresponding usual subexpressions, so we can apply Proposition \ref{pro_uppertriang}.
			\end{proof}
			We do not currently know of a general combinatorial method to predict the coefficients of
			$d(L_{\epsilon,\epsilon'})$ in the basis of double leaves. 
			Nevertheless we can prove the following key result.
			\begin{thm}[Path monotonicity of the differential]\label{thm_monotonicity}
				The differential decreases the path dominance preorder on double leaves. 
				More precisely, let $\{L_{\epsilon,\epsilon'}\}$ denote a set of double 
				leaves maps labeled as above. If we write:
				\[
					d(L_{\epsilon,\epsilon'})= \sum_{\zeta,\zeta'} a_{\zeta,\zeta'} L_{\zeta,\zeta'},
				\]
				then $a_{\zeta,\zeta'}=0$ unless $\zeta\lesssim \epsilon$ and $\zeta'\lesssim \epsilon'$.
			\end{thm}
			\begin{proof}
				We claim that the component	$(d(L_{\epsilon,\epsilon'}))^{\eta,\eta'}$ vanishes
				unless $\eta\lesssim \epsilon$ and $\eta'\lesssim\epsilon'$. 
				Write $d(L_{\epsilon,\epsilon'})=\sum_i\phi_i$ where the $\phi_i$ are
				the terms obtained by either uprooting or sprouting. We analyze each $\phi_i$ separately.
				Without loss of generality we can suppose that the operation happens on a bottom boundary
				point corresponding with $\epsilon_k$.
				\begin{enumerate}
					\item If $\epsilon_k=\emptyset$ and $\eta_k=1$, we are performing the following operations:
						\[
							\begin{tikzpicture}[x=.5cm,baseline=.2cm]
								\node at (-.75,.2) {\dots};\node at (2.25,.2) {\dots};
								\draw[gray] (-1.65,0)--(-1.65,.8);					
								\draw[gray] (-1.35,0)--(-1.35,.8);					
								\draw[gray] (-.15,0)--(-.15,.8);					
								\draw[gray] (1.65,0)--(1.65,.8);					
								\draw[gray] (2.85,0)--(2.85,.8);					
								\draw[gray] (3.15,0)--(3.15,.8);					
								\draw[gray,dashed] (-1.65,1.2)--(-1.65,.8);					
								\draw[gray,dashed] (-1.35,1.2)--(-1.35,.8);					
								\draw[gray,dashed] (-.15,1.2)--(-.15,.8);					
								\draw[gray,dashed] (1.65,1.2)--(1.65,.8);					
								\draw[gray,dashed] (2.85,1.2)--(2.85,.8);					
								\draw[gray,dashed] (3.15,1.2)--(3.15,.8);					
								\draw[positive] (-1.8,0)--(-1.5,0);\draw[negative] (-1.5,0)--(-1.2,0);\draw[positive] (-1.2,0)--(-.9,0);								
								\draw[negative] (-.9,0)--(-.6,0);\draw[positive] (-.6,0)--(-.3,0);\draw[negative] (-.3,0)--(0,0);								
								\draw[positive] (0,0)--(1.5,0);
								\draw[negative] (1.5,0)--(1.8,0);\draw[positive] (1.8,0)--(2.1,0);\draw[negative] (2.1,0)--(2.4,0);
								\draw[positive] (2.4,0)--(2.7,0);\draw[negative] (2.7,0)--(3,0);\draw[positive] (3,0)--(3.3,0);
								\pic[red] at (.75,0) {patch={1}};
							\end{tikzpicture}
							\rightsquigarrow
							\begin{tikzpicture}[x=.5cm,baseline=.2cm]
								\pic[red,y=.5cm] at (.75,0) {dot={1cm}};
								\node at (-.75,.2) {\dots};\node at (2.25,.2) {\dots};
								\draw[gray] (-1.65,0)--(-1.65,.8);					
								\draw[gray] (-1.35,0)--(-1.35,.8);					
								\draw[gray] (-.15,0)--(-.15,.8);					
								\draw[gray] (1.65,0)--(1.65,.8);					
								\draw[gray] (2.85,0)--(2.85,.8);					
								\draw[gray] (3.15,0)--(3.15,.8);					
								\draw[gray,dashed] (-1.65,1.2)--(-1.65,.8);					
								\draw[gray,dashed] (-1.35,1.2)--(-1.35,.8);					
								\draw[gray,dashed] (-.15,1.2)--(-.15,.8);					
								\draw[gray,dashed] (1.65,1.2)--(1.65,.8);					
								\draw[gray,dashed] (2.85,1.2)--(2.85,.8);					
								\draw[gray,dashed] (3.15,1.2)--(3.15,.8);					
								\draw[positive] (-1.8,0)--(-1.5,0);\draw[negative] (-1.5,0)--(-1.2,0);\draw[positive] (-1.2,0)--(-.9,0);								
								\draw[negative] (-.9,0)--(-.6,0);\draw[positive] (-.6,0)--(-.3,0);\draw[negative] (-.3,0)--(0,0);								
								\draw[positive] (0,0)--(1.5,0);
								\draw[negative] (1.5,0)--(1.8,0);\draw[positive] (1.8,0)--(2.1,0);\draw[negative] (2.1,0)--(2.4,0);
								\draw[positive] (2.4,0)--(2.7,0);\draw[negative] (2.7,0)--(3,0);\draw[positive] (3,0)--(3.3,0);
							\end{tikzpicture}
							\rightsquigarrow
							\begin{tikzpicture}[x=.5cm,baseline=.2cm]
								\draw[red,localized] (.75,0)--(.75,.5);\draw[red] (.75,.5)--(.75,.8);					
								\fill[red] (.75,.8) circle (1.5pt);
								\draw[gray,localized] (-.15,0)--(-.15,.5); \draw[gray] (-.15,.5)--(-.15,.8);					
								\fill[gray] (-1.35,.5) circle (1.5pt); \draw[gray] (-1.35,.5)--(-1.35,.8);					
								\draw[gray,localized] (-1.65,0)--(-1.65,.5);\draw[gray] (-1.65,.5)--(-1.65,.8);
								\fill[gray] (1.65,.5) circle (1.5pt);\draw[gray] (1.65,.5)--(1.65,.8);					
								\draw[gray,localized] (2.85,0)--(2.85,.5);\draw[gray] (2.85,.5)--(2.85,.8);
								\fill[gray] (3.15,.5) circle(1.5pt);\draw[gray] (3.15,.5)--(3.15,.8);					
								\draw[gray,dashed] (-1.65,1.2)--(-1.65,.8);					
								\draw[gray,dashed] (-1.35,1.2)--(-1.35,.8);					
								\draw[gray,dashed] (-.15,1.2)--(-.15,.8);					
								\draw[gray,dashed] (1.65,1.2)--(1.65,.8);					
								\draw[gray,dashed] (2.85,1.2)--(2.85,.8);					
								\draw[gray,dashed] (3.15,1.2)--(3.15,.8);					
								\node at (-.75,.2) {\dots};\node at (2.25,.2) {\dots};
								\draw[gray] (-2.1,0)--(3.6,0);
							\end{tikzpicture}
							=0
						\]						
						where the last equality follows from the first relation in \eqref{eq_localrel}.
					\item If $\epsilon_k=\emptyset$ and $\eta_k=0$, the first operation is the same, but then we have:
						\[
							\begin{tikzpicture}[x=.5cm,baseline=.2cm]
								\pic[red,y=.5cm] at (.75,0) {dot={1cm}};
								\node at (-.75,.2) {\dots};\node at (2.25,.2) {\dots};
								\draw[gray] (-1.65,0)--(-1.65,.8);					
								\draw[gray] (-1.35,0)--(-1.35,.8);					
								\draw[gray] (-.15,0)--(-.15,.8);					
								\draw[gray] (1.65,0)--(1.65,.8);					
								\draw[gray] (2.85,0)--(2.85,.8);					
								\draw[gray] (3.15,0)--(3.15,.8);					
								\draw[gray,dashed] (-1.65,1.2)--(-1.65,.8);					
								\draw[gray,dashed] (-1.35,1.2)--(-1.35,.8);					
								\draw[gray,dashed] (-.15,1.2)--(-.15,.8);					
								\draw[gray,dashed] (1.65,1.2)--(1.65,.8);					
								\draw[gray,dashed] (2.85,1.2)--(2.85,.8);					
								\draw[gray,dashed] (3.15,1.2)--(3.15,.8);					
								\draw[positive] (-1.8,0)--(-1.5,0);\draw[negative] (-1.5,0)--(-1.2,0);\draw[positive] (-1.2,0)--(-.9,0);								
								\draw[negative] (-.9,0)--(-.6,0);\draw[positive] (-.6,0)--(-.3,0);\draw[negative] (-.3,0)--(0,0);								
								\draw[positive] (0,0)--(1.5,0);
								\draw[negative] (1.5,0)--(1.8,0);\draw[positive] (1.8,0)--(2.1,0);\draw[negative] (2.1,0)--(2.4,0);
								\draw[positive] (2.4,0)--(2.7,0);\draw[negative] (2.7,0)--(3,0);\draw[positive] (3,0)--(3.3,0);
							\end{tikzpicture}
							\rightsquigarrow
							\begin{tikzpicture}[x=.5cm,baseline=.2cm]
								\draw[gray,dashed] (-1.65,1.2)--(-1.65,.8);					
								\draw[gray,dashed] (-1.35,1.2)--(-1.35,.8);					
								\draw[gray,dashed] (-.15,1.2)--(-.15,.8);					
								\draw[gray,dashed] (1.65,1.2)--(1.65,.8);					
								\draw[gray,dashed] (2.85,1.2)--(2.85,.8);					
								\draw[gray,dashed] (3.15,1.2)--(3.15,.8);					
								\fill[red] (.75,.4) circle (1.5pt); \draw[red] (.75,.4)--(.75,.8);					
								\fill[red] (.75,.8) circle (1.5pt);
								\draw[gray,localized] (-.15,0)--(-.15,.5); \draw[gray] (-.15,.5)--(-.15,.8);					
								\fill[gray] (-1.35,.5) circle (1.5pt); \draw[gray] (-1.35,.5)--(-1.35,.8);					
								\draw[gray,localized] (-1.65,0)--(-1.65,.5);\draw[gray] (-1.65,.5)--(-1.65,.8);
								\fill[gray] (1.65,.5) circle (1.5pt);\draw[gray] (1.65,.5)--(1.65,.8);					
								\draw[gray,localized] (2.85,0)--(2.85,.5);\draw[gray] (2.85,.5)--(2.85,.8);
								\fill[gray] (3.15,.5) circle(1.5pt);\draw[gray] (3.15,.5)--(3.15,.8);					
								\node at (-.75,.2) {\dots};\node at (2.25,.2) {\dots};
								\draw[gray] (-2.1,0)--(3.6,0);
							\end{tikzpicture}
							=w(\alpha_{\re{s}})
							\begin{tikzpicture}[x=.5cm,baseline=.2cm]
								\draw[gray,dashed] (-1.65,1.2)--(-1.65,.8);					
								\draw[gray,dashed] (-1.35,1.2)--(-1.35,.8);					
								\draw[gray,dashed] (-.15,1.2)--(-.15,.8);					
								\draw[gray,dashed] (1.65,1.2)--(1.65,.8);					
								\draw[gray,dashed] (2.85,1.2)--(2.85,.8);					
								\draw[gray,dashed] (3.15,1.2)--(3.15,.8);					
								\draw[gray,localized] (-.15,0)--(-.15,.5); \draw[gray] (-.15,.5)--(-.15,.8);					
								\fill[gray] (-1.35,.5) circle (1.5pt); \draw[gray] (-1.35,.5)--(-1.35,.8);					
								\draw[gray,localized] (-1.65,0)--(-1.65,.5);\draw[gray] (-1.65,.5)--(-1.65,.8);
								\fill[gray] (1.65,.5) circle (1.5pt);\draw[gray] (1.65,.5)--(1.65,.8);					
								\draw[gray,localized] (2.85,0)--(2.85,.5);\draw[gray] (2.85,.5)--(2.85,.8);
								\fill[gray] (3.15,.5) circle(1.5pt);\draw[gray] (3.15,.5)--(3.15,.8);					
								\node at (-.75,.2) {\dots};\node at (2.25,.2) {\dots};
								\draw[gray] (-2.1,0)--(3.6,0);
								\node[font=\small] at (-.9,-.2) {$\underbrace{\phantom{aaaaa}}_{w\in W}$};
							\end{tikzpicture}
						\]
						where we have slid the polynomial $\alpha_s$ across the squiggly lines to the left, using
						the third relation in \eqref{eq_localrel}.
						Now consider the enriched subexpression $\tilde{\eta}$ which is the same as $\eta$ except 
						$\tilde{\eta}_k=\emptyset$, in particular $\tilde{\eta}\sim\eta$. The remaining part of the diagram is nothing but 
						$(L_{\epsilon,\epsilon'})^{\tilde{\eta},\eta'}$. Hence $\phi_i^{\eta,\eta'}=w(\alpha_s)(L_{\epsilon,\epsilon'})^{\tilde{\eta},\eta'}$.
						By Proposition \ref{pro_uppertriangext} (\textit{i}), this is zero unless $\tilde{\eta}\lesssim\epsilon$ and $\eta'\lesssim\epsilon'$.
					\item If $\epsilon_k\neq \emptyset$, we can suppose $\eta_k=\emptyset$ (otherwise $\phi_i^{\eta,\eta'}=0$). In this case we are just computing 
						$(L_{\epsilon,\epsilon'})^{\tilde{\eta},\eta'}$ with $\tilde{\eta}$ equal to $\eta$ except that $\tilde{\eta}_k=0$ (in particular $\tilde{\eta}\sim\eta$). 
						Then we can conclude $\eta\sim \tilde{\eta}\lesssim \epsilon$ by Proposition \ref{pro_uppertriangext} (\textit{i}).
				\end{enumerate}				
				This proves the claim. Now, suppose that there are $\zeta$ and $\zeta'$ with 
				$a_{\zeta,\zeta'}\neq 0$ and such that $\zeta$ does not satisfy $\zeta\lesssim \epsilon$ (the argument for $\zeta'$ is the same).
				We can take $\zeta$ maximal with these properties. Then localizing at $\eta=\zeta$ and $\eta'=\zeta'$ gives a contradiction:
				the left hand side is zero by the claim and the right hand side is nonzero by Proposition \ref{pro_uppertriangext} (\textit{ii}) and the maximality of $\zeta$.
			\end{proof}
			By an abuse of language, we will say that two double leaves $L_{\epsilon,\epsilon'}$ and $L_{\zeta,\zeta'}$ are path equivalent
			if $\epsilon\sim \zeta$ and $\epsilon'\sim\zeta'$.
			\begin{rmk}\label{rmk_firstfiltr}
				Let $\{\LL_i\}$ be the set of path equivalence classes of double leaves, numbered in such a way that
				if $L_{\epsilon,\epsilon'}\in\LL_i$ and $L_{\zeta,\zeta'}\in \LL_j$ then $\epsilon\lesssim\zeta$ and $\epsilon'\lesssim\zeta'$ implies $i\le j$.
				Then by Theorem \ref{thm_monotonicity} the spans $N_i$ of all double leaves in $\cup_{j\le i} \LL_j$ are dg submodules forming a filtration.
			\end{rmk}
			One can refine path equivalence by taking decorations into account.
			We say that $\epsilon$ and $\zeta$ are \emph{strongly path equivalent}, and write
			$\epsilon\approx\zeta$, if $\epsilon\sim\zeta$ and, whenever $\epsilon_k\neq\zeta_k$, we have $\epsilon_k,\zeta_k\in\{ U0,U\emptyset \}$
			and they correspond to black boundary points. A strong path equivalence class is \emph{trivial} if it
			consists of a single element.
			\begin{rmk}\label{rmk_finfiltr}
				Consider the span of a (non strong) path equivalence class of double leaves.
				This forms a dg subquotient of the morphism space. Here the induced differential 
				can only decrease the number of white $U0$, black $D\emptyset$ and white $D0$ 
				by turning them into $U\emptyset$, $D0$ and $D\emptyset$ respectively. 
				This implies that one can number all the 
				strong path equivalence classes in such a way to obtain a refinement of the filtration, obtained in Remark \ref{rmk_firstfiltr},
				of the whole morphism space by dg submodules.			
			\end{rmk}
			The following technical result, which is a corollary of Theorem \ref{thm_monotonicity} and 
			Remark \ref{rmk_finfiltr} will be an important tool later.
			\begin{cor}\label{cor_N}
				Consider a morphism space $\Homb(\Roudg{\uom},\Roudg{\uom'})$ and
				let $N$ be a dg submodule spanned by a set of double leaves maps 
				which is a union of non trivial strong path equivalence classes.
				Then $N$ is contractible.
			\end{cor}
			\begin{proof}
				We consider the filtration of Remark \ref{rmk_finfiltr}.
				The subquotients $N_{i+1}/N_{i}$ are spanned by the double leaves maps from a 
				single strong path equivalence class. 
				In each subquotient the differential acts only on the black $U\emptyset$'s. 
				Then this dg module is isomorphic to $(R\overset{\id}{\rightarrow} R)^{\otimes n_i}$ 
				for some $n_i\ge 1$, as the class is not trivial. 
				Hence it is contractible. But then the whole of $N$ is contractible.
			\end{proof}
		\begin{exa}
			Here is an example of a subquotient as above:
			\begin{center}
				\begin{tikzpicture}[every node/.style={inner sep=0.8cm}]
					\node (a) at (.75,0.5) {}; \node (b1) at (3.25,2) {}; \node (b2) at (3.25,0.5) {}; \node (b3) at (3.25,-1) {};
					\node (c1) at (5.75,2) {}; \node (c2) at (5.75,0.5) {}; \node (c3) at (5.75,-1) {};
					\node (d) at (8.25,0.5) {};
					\draw[-latex] (a)--(b1);
					\draw[-latex] (a)--(b2);
					\draw[-latex] (a)--(b3);
					\draw[-latex] (b1)--(c1);
					\draw[-latex] (b1)--(c2);
					\draw[-latex] (b2)--(c1);
					\draw[-latex] (b2)--(c3);
					\draw[-latex] (b3)--(c2);
					\draw[-latex] (b3)--(c3);
					\draw[-latex] (c1)--(d);
					\draw[-latex] (c2)--(d);
					\draw[-latex] (c3)--(d);
					\begin{scope}[x=.3cm,y=.5cm]
						\draw[red] (1,0)..controls (1,.5).. (2,1)--(2,2); \draw[red] (3,0)..controls (3,.5)..(2,1);
						\draw[blue] (4,2)--(4,1.5);\fill[blue] (4,1.5) circle (1.5pt);
						\draw[positive] (0.5,0)--(2.5,0);\draw[negative] (2.5,0)--(3.5,0); \draw[positive] (3.5,0)--(4.5,0);
						\draw[negative] (0.5,2)--(2.5,2);\draw[positive] (2.5,2)--(3.5,2); \draw[negative] (3.5,2)--(4.5,2);
						\pic[blue,scale=.8,transform shape] at (2,0) {patch={1}};
						\pic[green,scale=.8,transform shape] at (4,0) {patch={1}};
						\pic[green,scale=.8,transform shape] at (3,2) {patch={1}};
					\end{scope}
					\begin{scope}[xshift=2.5cm,yshift=1.5cm,x=.3cm,y=.5cm]
						\draw[red] (1,0)..controls (1,.5).. (2,1)--(2,2); \draw[red] (3,0)..controls (3,.5)..(2,1);
						\draw[blue] (4,2)--(4,1.5);\fill[blue] (4,1.5) circle (1.5pt);
						\draw[blue] (2,0)--(2,.5);\fill[blue] (2,0.5) circle (1.5pt);
						\draw[positive] (0.5,0)--(2.5,0);\draw[negative] (2.5,0)--(3.5,0); \draw[positive] (3.5,0)--(4.5,0);
						\draw[negative] (0.5,2)--(2.5,2);\draw[positive] (2.5,2)--(3.5,2); \draw[negative] (3.5,2)--(4.5,2);
						\pic[green,scale=.8,transform shape] at (4,0) {patch={1}};
						\pic[green,scale=.8,transform shape] at (3,2) {patch={1}};
					\end{scope}
					\begin{scope}[xshift=2.5cm,yshift=0,x=.3cm,y=.5cm]
						\draw[red] (1,0)..controls (1,.5).. (2,1)--(2,2); \draw[red] (3,0)..controls (3,.5)..(2,1);
						\draw[blue] (4,2)--(4,1.5);\fill[blue] (4,1.5) circle (1.5pt);
						\draw[green] (4,0)--(4,0.5);\fill[green] (4,0.5) circle (1.5pt);
						\draw[positive] (0.5,0)--(2.5,0);\draw[negative] (2.5,0)--(3.5,0); \draw[positive] (3.5,0)--(4.5,0);
						\draw[negative] (0.5,2)--(2.5,2);\draw[positive] (2.5,2)--(3.5,2); \draw[negative] (3.5,2)--(4.5,2);
						\pic[blue,scale=.8,transform shape] at (2,0) {patch={1}};
						\pic[green,scale=.8,transform shape] at (3,2) {patch={1}};
					\end{scope}
					\begin{scope}[xshift=2.5cm,yshift=-1.5cm,x=.3cm,y=.5cm]
						\draw[red] (1,0)..controls (1,.5).. (2,1)--(2,2); \draw[red] (3,0)..controls (3,.5)..(2,1);
						\draw[blue] (4,2)--(4,1.5);\fill[blue] (4,1.5) circle (1.5pt);
						\draw[green] (3,2)--(3,1.5);\fill[green] (3,1.5) circle (1.5pt);
						\draw[positive] (0.5,0)--(2.5,0);\draw[negative] (2.5,0)--(3.5,0); \draw[positive] (3.5,0)--(4.5,0);
						\draw[negative] (0.5,2)--(2.5,2);\draw[positive] (2.5,2)--(3.5,2); \draw[negative] (3.5,2)--(4.5,2);
						\pic[blue,scale=.8,transform shape] at (2,0) {patch={1}};
						\pic[green,scale=.8,transform shape] at (4,0) {patch={1}};
					\end{scope}
					\begin{scope}[xshift=5cm,yshift=1.5cm,x=.3cm,y=.5cm]
						\draw[red] (1,0)..controls (1,.5).. (2,1)--(2,2); \draw[red] (3,0)..controls (3,.5)..(2,1);
						\draw[blue] (4,2)--(4,1.5);\fill[blue] (4,1.5) circle (1.5pt);
						\draw[blue] (2,0)--(2,0.5);\fill[blue] (2,0.5) circle (1.5pt);
						\draw[green] (4,0)--(4,0.5);\fill[green] (4,0.5) circle (1.5pt);
						\draw[positive] (0.5,0)--(2.5,0);\draw[negative] (2.5,0)--(3.5,0); \draw[positive] (3.5,0)--(4.5,0);
						\draw[negative] (0.5,2)--(2.5,2);\draw[positive] (2.5,2)--(3.5,2); \draw[negative] (3.5,2)--(4.5,2);
						\pic[green,scale=.8,transform shape] at (3,2) {patch={1}};
					\end{scope}
					\begin{scope}[xshift=5cm,yshift=0,x=.3cm,y=.5cm]
						\draw[red] (1,0)..controls (1,.5).. (2,1)--(2,2); \draw[red] (3,0)..controls (3,.5)..(2,1);
						\draw[blue] (4,2)--(4,1.5);\fill[blue] (4,1.5) circle (1.5pt);
						\draw[blue] (2,0)--(2,0.5);\fill[blue] (2,0.5) circle (1.5pt);
						\draw[green] (3,2)--(3,1.5);\fill[green] (3,1.5) circle (1.5pt);
						\draw[positive] (0.5,0)--(2.5,0);\draw[negative] (2.5,0)--(3.5,0); \draw[positive] (3.5,0)--(4.5,0);
						\draw[negative] (0.5,2)--(2.5,2);\draw[positive] (2.5,2)--(3.5,2); \draw[negative] (3.5,2)--(4.5,2);
						\pic[green,scale=.8,transform shape] at (4,0) {patch={1}};
					\end{scope}
					\begin{scope}[xshift=5cm,yshift=-1.5cm,x=.3cm,y=.5cm]
						\draw[red] (1,0)..controls (1,.5).. (2,1)--(2,2); \draw[red] (3,0)..controls (3,.5)..(2,1);
						\draw[blue] (4,2)--(4,1.5);\fill[blue] (4,1.5) circle (1.5pt);
						\draw[blue] (4,0)--(4,0.5);\fill[blue] (4,0.5) circle (1.5pt);
						\draw[blue] (3,2)--(3,1.5);\fill[blue] (3,1.5) circle (1.5pt);
						\draw[positive] (0.5,0)--(2.5,0);\draw[negative] (2.5,0)--(3.5,0); \draw[positive] (3.5,0)--(4.5,0);
						\draw[negative] (0.5,2)--(2.5,2);\draw[positive] (2.5,2)--(3.5,2); \draw[negative] (3.5,2)--(4.5,2);
						\pic[blue,scale=.8,transform shape] at (2,0) {patch={1}};
					\end{scope}
					\begin{scope}[xshift=7.5cm,x=.3cm,y=.5cm]
						\draw[red] (1,0)..controls (1,.5).. (2,1)--(2,2); \draw[red] (3,0)..controls (3,.5)..(2,1);
						\draw[blue] (4,2)--(4,1.5);\fill[blue] (4,1.5) circle (1.5pt);
						\draw[blue] (2,0)--(2,0.5);\fill[blue] (2,0.5) circle (1.5pt);
						\draw[green] (4,0)--(4,0.5);\fill[green] (4,0.5) circle (1.5pt);
						\draw[green] (3,2)--(3,1.5);\fill[green] (3,1.5) circle (1.5pt);
						\draw[positive] (0.5,0)--(2.5,0);\draw[negative] (2.5,0)--(3.5,0); \draw[positive] (3.5,0)--(4.5,0);
						\draw[negative] (0.5,2)--(2.5,2);\draw[positive] (2.5,2)--(3.5,2); \draw[negative] (3.5,2)--(4.5,2);
					\end{scope}
				\end{tikzpicture}				
			\end{center}
			In this case it is isomorphic to $(R\overset{\id}{\rightarrow}R)^{\otimes 3}$.
		\end{exa}
		\begin{rmk}\label{rmk_blackU0}
			If one has a dg submodule $N$ as in the Corollary, which is spanned by a union of strong path equivalence classes,
			a necessary and sufficient condition for these classes to be non trivial is that each 
			double leaves map contains at least one black $U0$ or one black $U\emptyset$. In fact, suppose that
			a double leaves map contains a black $U0$, then in the same class there has to be the double leaves map with that 
			symbol replaced by $U\emptyset$, and vice versa.
		\end{rmk}
		\begin{rmk}
			One could define strong path equivalence by replacing $U$ with $D$ and ``black'' with
			``white'' and one would obtain an analogous corollary.
		\end{rmk}
			We now prove Proposition \ref{pro_braidRou} in a completely combinatorial way.
		\subsection{Inverse relation}		
		The first property, namely $F_s F_s^{-1}\cong \un$, can be easily proven via \emph{Gaussian 
		elimination} for complexes (see \cite{BarNat}). One can find explicit homotopy equivalences
		in \cite{EK}. Here we will describe these equivalences in terms of diagrams with patches.
		\begin{proof}[Proof of Proposition \ref{pro_braidRou} \ref{item_propinvers}]
			For any $s\in S$, consider:
			\begin{align}\label{eq_inversebraid}
				&\begin{tikzcd}[ampersand replacement=\&]
					F_sF_{s}^{-1} \ar[r,shift left,"\eta^-"] \& \un[0]\ar[l,shift left,"\epsilon^+"]
				\end{tikzcd},&
				&\begin{tikzcd}[ampersand replacement=\&]
					F_{s}^{-1}F_s \ar[r,shift left,"\eta^+"] \& \un[0]\ar[l,shift left,"\epsilon^-"]				
				\end{tikzcd},&
			\end{align}
			where
			\begin{align}
				&\epsilon^+=
				\begin{tikzpicture}[x=.5cm,y=.4cm,baseline=.3cm]
					\draw[gray](0.5,0)--(2.5,0); \draw[negative](0.5,2)--(1.5,2);\draw[positive](1.5,2)--(2.5,2);
					\pic[red,scale=.8,transform shape] at (1,2) {patch={1}}; \pic[red,scale=.8,transform shape] at (2,2) {patch={1}};
				\end{tikzpicture}
				+
				\begin{tikzpicture}[x=.5cm,y=.4cm,baseline=.3cm]
					\draw[red] (1,2) ..controls (1,.5) and (2,.5).. (2,2);
					\draw[gray](0.5,0)--(2.5,0); \draw[negative](0.5,2)--(1.5,2);\draw[positive](1.5,2)--(2.5,2);
				\end{tikzpicture}&
				&\eta^+=
				\begin{tikzpicture}[x=.5cm,y=.4cm,baseline=.3cm]
					\draw[gray](0.5,2)--(2.5,2); \draw[negative](0.5,0)--(1.5,0);\draw[positive](1.5,0)--(2.5,0);
					\pic[red,scale=.8,transform shape] at (1,0) {patch={1}}; \pic[red,scale=.8,transform shape] at (2,0) {patch={1}};
				\end{tikzpicture}
				+
				\begin{tikzpicture}[x=.5cm,y=.4cm,baseline=.3cm]
					\draw[red] (1,0) ..controls (1,1.5) and (2,1.5).. (2,0);
					\draw[gray](0.5,2)--(2.5,2); \draw[negative](0.5,0)--(1.5,0);\draw[positive](1.5,0)--(2.5,0);
				\end{tikzpicture}\\[1em]
				&\epsilon^-=
				\begin{tikzpicture}[x=.5cm,y=.4cm,baseline=.3cm]
					\draw[gray](0.5,0)--(2.5,0); \draw[positive](0.5,2)--(1.5,2);\draw[negative](1.5,2)--(2.5,2);
					\pic[red,scale=.8,transform shape] at (1,2) {patch={1}}; \pic[red,scale=.8,transform shape] at (2,2) {patch={1}};
				\end{tikzpicture}
				-
				\begin{tikzpicture}[x=.5cm,y=.4cm,baseline=.3cm]
					\draw[red] (1,2) ..controls (1,.5) and (2,.5).. (2,2);
					\draw[gray](0.5,0)--(2.5,0); \draw[positive](0.5,2)--(1.5,2);\draw[negative](1.5,2)--(2.5,2);
				\end{tikzpicture}&
				&\eta^-=
				\begin{tikzpicture}[x=.5cm,y=.4cm,baseline=.3cm]
					\draw[gray](0.5,2)--(2.5,2); \draw[positive](0.5,0)--(1.5,0);\draw[negative](1.5,0)--(2.5,0);
					\pic[red,scale=.8,transform shape] at (1,0) {patch={1}}; \pic[red,scale=.8,transform shape] at (2,0) {patch={1}};
				\end{tikzpicture}
				-
				\begin{tikzpicture}[x=.5cm,y=.4cm,baseline=.3cm]
					\draw[red] (1,0) ..controls (1,1.5) and (2,1.5).. (2,0);
					\draw[gray](0.5,2)--(2.5,2); \draw[positive](0.5,0)--(1.5,0);\draw[negative](1.5,0)--(2.5,0);
				\end{tikzpicture}&
			\end{align}
			One can easily obtain $\eta^-\epsilon^+=\id_{\un[0]}$
			from the rules of composition of dg diagrams. On the other hand we have:
			\[
				\id_{F_{s}F_{s}^{-1}}-\epsilon^+\eta^- = %
				\begin{tikzpicture}[x=.5cm,y=.4cm,baseline=.3cm]
					\draw[red](1,0)--(1,2);\draw[red] (2,0)--(2,2);
					\draw[positive] (0.5,0)--(1.5,0);\draw[negative] (1.5,0)--(2.5,0);
					\draw[negative] (0.5,2)--(1.5,2);\draw[positive] (1.5,2)--(2.5,2);
				\end{tikzpicture}
				+
				\begin{tikzpicture}[x=.5cm,y=.4cm,baseline=.3cm]
					\draw[red](1,0)--(1,2);
					\draw[positive] (0.5,0)--(1.5,0);\draw[negative] (1.5,0)--(2.5,0);
					\draw[negative] (0.5,2)--(1.5,2);\draw[positive] (1.5,2)--(2.5,2);
					\pic[red,scale=.8,transform shape] at (2,0) {patch={1}}; \pic[red,scale=.8,transform shape] at (2,2) {patch={1}};
				\end{tikzpicture}
				+
				\begin{tikzpicture}[x=.5cm,y=.4cm,baseline=.3cm]
					\draw[red](2,0)--(2,2);
					\draw[positive] (0.5,0)--(1.5,0);\draw[negative] (1.5,0)--(2.5,0);
					\draw[negative] (0.5,2)--(1.5,2);\draw[positive] (1.5,2)--(2.5,2);
					\pic[red,scale=.8,transform shape] at (1,0) {patch={1}}; \pic[red,scale=.8,transform shape] at (1,2) {patch={1}};
				\end{tikzpicture}
				-
				\begin{tikzpicture}[x=.5cm,y=.4cm,baseline=.3cm]
					\draw[red](1,2) ..controls (1,.5) and (2,.5).. (2,2);
					\draw[positive] (0.5,0)--(1.5,0);\draw[negative] (1.5,0)--(2.5,0);
					\draw[negative] (0.5,2)--(1.5,2);\draw[positive] (1.5,2)--(2.5,2);
					\pic[red,scale=.8,transform shape] at (1,0) {patch={1}}; \pic[red,scale=.8,transform shape] at (2,0) {patch={1}};
				\end{tikzpicture}				
				+
				\begin{tikzpicture}[x=.5cm,y=.4cm,baseline=.3cm]
					\draw[red](1,0) ..controls (1,1.5) and (2,1.5).. (2,0);
					\draw[positive] (0.5,0)--(1.5,0);\draw[negative] (1.5,0)--(2.5,0);
					\draw[negative] (0.5,2)--(1.5,2);\draw[positive] (1.5,2)--(2.5,2);
					\pic[red,scale=.8,transform shape] at (1,2) {patch={1}}; \pic[red,scale=.8,transform shape] at (2,2) {patch={1}};
				\end{tikzpicture}				
				+
				\begin{tikzpicture}[x=.5cm,y=.4cm,baseline=.3cm]
					\draw[red](1,0) ..controls (1,1) and (2,1).. (2,0);
					\draw[red](1,2) ..controls (1,1) and (2,1).. (2,2);
					\draw[positive] (0.5,0)--(1.5,0);\draw[negative] (1.5,0)--(2.5,0);
					\draw[negative] (0.5,2)--(1.5,2);\draw[positive] (1.5,2)--(2.5,2);
				\end{tikzpicture}.
			\]
			We want to show that this is exact, and therefore null-homotopic.
			Notice that, by \eqref{sliding}, \eqref{associativity} and \eqref{dotline}, we have:
			\begin{align*}
				&\begin{tikzpicture}[x=.5cm,y=.5cm,baseline=.4cm]
					\draw[red] (0.5,0)--(0.5,2);\draw[red] (1.5,0)--(1.5,2);
					\draw[positive] (0.1,0)--(1,0); \draw[negative] (1,0)--(1.9,0); 
					\draw[negative] (0.1,2)--(1,2); \draw[positive] (1,2)--(1.9,2); 
				\end{tikzpicture}
				=
				\begin{tikzpicture}[x=.5cm,y=.5cm,baseline=.4cm]
					\draw[red] (0.5,0)--(0.5,2);\draw[red] (1.5,0)--(1.5,2);\draw[red](0.5,1)--(1.5,1);
					\node[inner sep=2] at (1,1.4) {\small $\delta_s$};
					\draw[positive] (0.1,0)--(1,0); \draw[negative] (1,0)--(1.9,0); 
					\draw[negative] (0.1,2)--(1,2); \draw[positive] (1,2)--(1.9,2); 
				\end{tikzpicture}
				-
				\begin{tikzpicture}[x=.6cm,y=.5cm,baseline=.4cm]
					\draw[red] (0.4,0)--(0.4,2);\draw[red] (1.6,0)--(1.6,2);\draw[red](0.4,1)--(1.6,1);
					\node[inner sep=2] at (1,0.6) {\small $s(\delta_s)$};
					\draw[positive] (0.1,0)--(1,0); \draw[negative] (1,0)--(1.9,0); 
					\draw[negative] (0.1,2)--(1,2); \draw[positive] (1,2)--(1.9,2); 
				\end{tikzpicture}
				=
				\begin{tikzpicture}[x=.5cm,y=.5cm,baseline=.4cm]
					\draw[red](0.5,0)--(1,1);\draw[red](1.5,0)--(0.5,2); \draw[red](1.5,2)--(1.5,1.5);\fill[red](1.5,1.5) circle (1.5pt);
					\draw[positive] (0.1,0)--(1,0); \draw[negative] (1,0)--(1.9,0); 
					\draw[negative] (0.1,2)--(1,2); \draw[positive] (1,2)--(1.9,2); 
				\end{tikzpicture}
				+
				\begin{tikzpicture}[x=.5cm,y=.5cm,baseline=.4cm]
					\draw[red] (0.6,0)--(0.6,2);\draw[red] (1.4,0)--(1.4,2);\draw[red](0.6,1)--(1.4,1);
					\node[anchor=west] at (1.4,1) {\small $s(\delta_s)$};
					\draw[positive] (0.1,0)--(1,0); \draw[negative] (1,0)--(1.9,0); 
					\draw[negative] (0.1,2)--(1,2); \draw[positive] (1,2)--(1.9,2); 
				\end{tikzpicture}
				+
				\begin{tikzpicture}[x=.5cm,y=.5cm,baseline=.4cm]
					\draw[red](1.5,2)--(1,1);\draw[red](1.5,0)--(0.5,2); \draw[red](0.5,0)--(0.5,0.5);\fill[red](0.5,0.5) circle (1.5pt);
					\draw[positive] (0.1,0)--(1,0); \draw[negative] (1,0)--(1.9,0); 
					\draw[negative] (0.1,2)--(1,2); \draw[positive] (1,2)--(1.9,2); 
				\end{tikzpicture}
				-
				\begin{tikzpicture}[x=.5cm,y=.5cm,baseline=.4cm]
					\draw[red] (0.5,0)--(0.5,2);\draw[red] (1.5,0)--(1.5,2);\draw[red](0.5,1)--(1.5,1);
					\node[inner sep=2] at (0.2,1) {\small $\delta_s$};
					\draw[positive] (0.1,0)--(1,0); \draw[negative] (1,0)--(1.9,0); 
					\draw[negative] (0.1,2)--(1,2); \draw[positive] (1,2)--(1.9,2); 
				\end{tikzpicture}\\[1em]
				&\begin{tikzpicture}[x=.5cm,y=.5cm,baseline=.4cm]
					\draw[red](0.5,0) ..controls (0.5,1) and (1.5,1).. (1.5,0); \draw[red](0.5,2) ..controls (0.5,1) and (1.5,1).. (1.5,2);
					\draw[positive] (0.1,0)--(1,0); \draw[negative] (1,0)--(1.9,0); 
					\draw[negative] (0.1,2)--(1,2); \draw[positive] (1,2)--(1.9,2); 
				\end{tikzpicture}
				=
				-
				\begin{tikzpicture}[x=.5cm,y=.5cm,baseline=.4cm]
					\draw[red] (0.5,0)--(1,.7)--(1,1.3)--(0.5,2);\draw[red] (1.5,0)--(1,.7);\draw[red](1.5,2)--(1,1.3);
					\node[anchor=west] at (1,1) {\small $s(\delta_s)$};
					\draw[positive] (0.1,0)--(1,0); \draw[negative] (1,0)--(1.9,0); 
					\draw[negative] (0.1,2)--(1,2); \draw[positive] (1,2)--(1.9,2); 
				\end{tikzpicture}
				+
				\begin{tikzpicture}[x=.5cm,y=.5cm,baseline=.4cm]
					\draw[red] (0.5,0)--(1,.7)--(1,1.3)--(0.5,2);\draw[red] (1.5,0)--(1,.7);\draw[red](1.5,2)--(1,1.3);
					\node[inner sep=2] at (0.5,1) {\small $\delta_s$};
					\draw[positive] (0.1,0)--(1,0); \draw[negative] (1,0)--(1.9,0); 
					\draw[negative] (0.1,2)--(1,2); \draw[positive] (1,2)--(1.9,2); 
				\end{tikzpicture}	
				=-
				\begin{tikzpicture}[x=.5cm,y=.5cm,baseline=.4cm]
					\draw[red] (0.6,0)--(0.6,2);\draw[red] (1.4,0)--(1.4,2);\draw[red](0.6,1)--(1.4,1);
					\node[anchor=west] at (1.4,1) {\small $s(\delta_s)$};
					\draw[positive] (0.1,0)--(1,0); \draw[negative] (1,0)--(1.9,0); 
					\draw[negative] (0.1,2)--(1,2); \draw[positive] (1,2)--(1.9,2); 
				\end{tikzpicture}
				+
				\begin{tikzpicture}[x=.5cm,y=.5cm,baseline=.4cm]
					\draw[red] (0.5,0)--(0.5,2);\draw[red] (1.5,0)--(1.5,2);\draw[red](0.5,1)--(1.5,1);
					\node[inner sep=2] at (0.2,1) {\small $\delta_s$};
					\draw[positive] (0.1,0)--(1,0); \draw[negative] (1,0)--(1.9,0); 
					\draw[negative] (0.1,2)--(1,2); \draw[positive] (1,2)--(1.9,2); 
				\end{tikzpicture}				
			\end{align*}
			Hence the difference $\id_{F_{s}F_{s}^{-1}}-\epsilon^+\eta^-$ above becomes:
			\[
				\begin{tikzpicture}[x=.5cm,y=.4cm,baseline=.3cm]
					\draw[red](1,2)--(2,0);\draw[red] (1,0)--(1.5,1);\draw[red] (2,2)--(2,1.3);
					\fill[red](2,1.3) circle (1.5pt);
					\draw[positive] (0.5,0)--(1.5,0);\draw[negative] (1.5,0)--(2.5,0);
					\draw[negative] (0.5,2)--(1.5,2);\draw[positive] (1.5,2)--(2.5,2);
				\end{tikzpicture}
				+
				\begin{tikzpicture}[x=.5cm,y=.4cm,baseline=.3cm]
					\draw[red](1,2)--(2,0);\draw[red] (2,2)--(1.5,1);\draw[red] (1,0)--(1,0.7);
					\fill[red](1,0.7) circle (1.5pt);
					\draw[positive] (0.5,0)--(1.5,0);\draw[negative] (1.5,0)--(2.5,0);
					\draw[negative] (0.5,2)--(1.5,2);\draw[positive] (1.5,2)--(2.5,2);
				\end{tikzpicture}
				+
				\begin{tikzpicture}[x=.5cm,y=.4cm,baseline=.3cm]
					\draw[red](1,0)--(1,2);
					\draw[positive] (0.5,0)--(1.5,0);\draw[negative] (1.5,0)--(2.5,0);
					\draw[negative] (0.5,2)--(1.5,2);\draw[positive] (1.5,2)--(2.5,2);
					\pic[red,scale=.8,transform shape] at (2,0) {patch={1}}; \pic[red,scale=.8,transform shape] at (2,2) {patch={1}};
				\end{tikzpicture}
				+
				\begin{tikzpicture}[x=.5cm,y=.4cm,baseline=.3cm]
					\draw[red](2,0)--(2,2);
					\draw[positive] (0.5,0)--(1.5,0);\draw[negative] (1.5,0)--(2.5,0);
					\draw[negative] (0.5,2)--(1.5,2);\draw[positive] (1.5,2)--(2.5,2);
					\pic[red,scale=.8,transform shape] at (1,0) {patch={1}}; \pic[red,scale=.8,transform shape] at (1,2) {patch={1}};
				\end{tikzpicture}
				-
				\begin{tikzpicture}[x=.5cm,y=.4cm,baseline=.3cm]
					\draw[red](1,2) ..controls (1,.5) and (2,.5).. (2,2);
					\draw[positive] (0.5,0)--(1.5,0);\draw[negative] (1.5,0)--(2.5,0);
					\draw[negative] (0.5,2)--(1.5,2);\draw[positive] (1.5,2)--(2.5,2);
					\pic[red,scale=.8,transform shape] at (1,0) {patch={1}}; \pic[red,scale=.8,transform shape] at (2,0) {patch={1}};
				\end{tikzpicture}				
				+
				\begin{tikzpicture}[x=.5cm,y=.4cm,baseline=.3cm]
					\draw[red](1,0) ..controls (1,1.5) and (2,1.5).. (2,0);
					\draw[positive] (0.5,0)--(1.5,0);\draw[negative] (1.5,0)--(2.5,0);
					\draw[negative] (0.5,2)--(1.5,2);\draw[positive] (1.5,2)--(2.5,2);
					\pic[red,scale=.8,transform shape] at (1,2) {patch={1}}; \pic[red,scale=.8,transform shape] at (2,2) {patch={1}};
				\end{tikzpicture}				
			\]
			and one can check that this is:
			\[
				d\left(%
				\begin{tikzpicture}[x=.5cm,y=.4cm,baseline=.3cm]
					\draw[red](1,0)--(1.5,1);
					\draw[red](2,0)--(1,2);
					\draw[positive] (0.5,0)--(1.5,0);\draw[negative] (1.5,0)--(2.5,0); 
					\draw[negative] (0.5,2)--(1.5,2);\draw[positive] (1.5,2)--(2.5,2); 
					\pic[red,scale=.8,transform shape] at (2,2) {patch={1}};
				\end{tikzpicture}
				+
				\begin{tikzpicture}[x=.5cm,y=.4cm,baseline=.3cm]
					\draw[red](2,0)--(1,2);
					\draw[red](2,2)--(1.5,1);
					\draw[positive] (0.5,0)--(1.5,0);\draw[negative] (1.5,0)--(2.5,0); 
					\draw[negative] (0.5,2)--(1.5,2);\draw[positive] (1.5,2)--(2.5,2); 
					\pic[red,scale=.8,transform shape] at (1,0) {patch={1}};
				\end{tikzpicture}
				\right)
			\]
			One can do similar computations for the pair $(\eta^+,\epsilon^-)$. 		
			\end{proof}
		\subsection{Braid relation}\label{subs_braidrel}
		We now pass to the braid relation. 
		Let $s,t\in S$ such that $st$ has order $m<\infty$ in $W$. We introduce the following notations:
			\begin{align*}
				&w(s,t):=\underbrace{sts\dots}_{m\,\text{times}}=\underbrace{tst\dots}_{m\,\text{times}}\in W,& &&\\
				&\uw(s,t):=\underbrace{sts\dots}_{m\,\text{times}}\in \cword,& &\uom(s,t):=\underbrace{\sigma_s\sigma_t\sigma_s\dots}_{m\,\text{times}}\in \bword, & \\
				&\uw(t,s):=\underbrace{tst\dots}_{m\,\text{times}}\in \cword,& &\uom(t,s):=\underbrace{\sigma_t\sigma_s\sigma_t\dots}_{m\,\text{times}}\in \bword. &
			\end{align*}
		We will reduce the dg morphism space 
		$\Homb(F_{\uom(s,t)}^\bullet,F_{\uom(t,s)}^\bullet)$ by describing a large contractible dg submodule, whose
		quotient is just a copy of $R$ in degree $0$. 
		This will give a canonical homotopy equivalence 
		$F_{\uom(s,t)}^\bullet\rightarrow F_{\uom(t,s)}^\bullet$.		
		\begin{rmk}\label{rmk_sesBS}
			In the Hecke category we can consider the objects $B_{\uw(s,t)}$ and $B_{\uw(t,s)}$ and a double 
			leaves basis for the morphism space $H$ between the two. 
			Consider the subspace $N$ of morphisms factoring through shorter words.
			This is spanned, as a left $R$-module, 
			by all the double leaves maps except the one consisting of the 
			$(s,t)$-ar, which we call $\beta$. This corresponds to the pair of subexpressions 
			consisting of only 1's. 
			Then we have a short exact sequence of left $R$-modules:
			\[
				0\longrightarrow N \longrightarrow H \longrightarrow R \longrightarrow 0.
			\]
			If we see $R$ as the standard $R$-bimodule $R_{w(s,t)}$, 
			where the left structure is the usual one and
			the right structure is twisted by the element $w(s,t)\in W$, then this is actually a 
			short exact sequence of $R$-bimodules. In fact 
			$N$ is stable by left and right multiplication by polynomials and
			multiplying the $(s,t)$-ar on the left by $f\in R$ is the same, modulo lower terms, 
			as multiplying it by $w(s,t)(f)$
			on the right (as one can see via localization).
		\end{rmk}
		\begin{dfn}
			Let $N_{s,t}\subset \Homb(F^\bullet_{\uom(s,t)},F^\bullet_{\uom(t,s)})$ 
			be the span of all homogeneous morphisms factoring through words with less than 
			$m$ symbols. This in particular contains all morphisms with starting or ending subexpressions
			different from 11\dots 1.
		\end{dfn}
		\begin{pro}\label{pro_sesbraid}
			We have: 
			\begin{enumerate}
				\item the space $N_{s,t}$ is a dg submodule of $\Homb(F_{\uom(s,t)}^\bullet,F_{\uom(t,s)}^\bullet)$;
				\item the quotient is $R[0]$, in other words we have a short exact sequence of dg modules:
					\begin{equation}\label{eq_sesbraid}
						0\longrightarrow N_{s,t}\longrightarrow \Hom^{\bullet}(F_{\uom(s,t)}^\bullet,F_{\uom(t,s)}^\bullet)\longrightarrow R[0]\longrightarrow 0 ;					
					\end{equation}
				\item the dg-module $N_{s,t}$ is contractible, so the quotient map above
					is a homotopy equivalence.
			\end{enumerate}
		\end{pro}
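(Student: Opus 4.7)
The plan is to handle the three parts in turn, with essentially all of the work going into part (iii).

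For part (i), the defining property of $N_{s,t}$ (factoring through a braid word with fewer than $m$ symbols) is visibly preserved by the Leibniz rule for the differential on $\Hom^\bullet$: if $f = g \circ h$ with an intermediate object whose Bott--Samelson summands all have length less than $m$, then the components of $df$ still compose through the same intermediate object. Diagrammatically this is also transparent from Remark \ref{rmk_ondgdiagrams}: both dot-sprouting and strand-uprooting act only on the top or bottom boundary, so they commute with any factorization across the middle of the diagram.

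For part (ii), I would use the dg double-leaves basis of $\Hom^\bullet(F_{\uom_{s,t}},F_{\uom_{t,s}})$ described in Remark \ref{rmk_ondgdiagrams} \ref{item_rmkdgdiagdoubleaves}. A basis element labeled by $(\boi,\boi',\boe,\boe')$ lies outside $N_{s,t}$ only when $\boi = \boi' = 1^m$ and the element $w = \uom_{s,t}^{\boe} = \uom_{t,s}^{\boe'}$ has length $m$. Since $\uom_{s,t}$ and $\uom_{t,s}$ are both reduced expressions for the longest element $w_0$ of $\langle s,t\rangle$, this also forces $\boe = \boe' = 1^m$. There is a unique such double leaf, which reduces essentially to a single $(s,t)$-ar, and it has polynomial and cohomological degree $0$. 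Hence the quotient is freely generated over $R$ by this one class, giving the short exact sequence \eqref{eq_sesbraid}.

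For part (iii), the main difficulty, the plan is to construct an explicit contracting homotopy $h\colon N_{s,t}\to N_{s,t}$ of cohomological degree $-1$ satisfying $dh + hd = \id_{N_{s,t}}$. The natural way to do this is to pair up dg double leaves in $N_{s,t}$ using the combinatorial relation $\overset{d}{\succ}$: for a basis element $L$ with at least one patch, choose a canonical distinguished patch (for instance the rightmost one on a specified boundary) and let $h(L)$ be the diagram obtained by sprouting that patch into a dot, or dually by uprooting the corresponding strand. One then has to check that this assignment is an involutive matching that exhausts every basis element of $N_{s,t}$, and that the signs coming from \eqref{eq_diffonsubexpr} together with the Koszul sign rule combine to give exactly $dh + hd = \id$.

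The main obstacle is this last verification: setting up the pairing so that every double leaf of $N_{s,t}$ is uniquely accounted for and so that the signs match on both sides of $dh + hd = \id$. A conceptually cleaner alternative, which I would fall back on if the explicit pairing becomes unwieldy, is to invoke the already-established part \ref{item_propinvers} of Proposition \ref{pro_braidRou} to reduce, by iterated Gaussian elimination using the contractibility of $F_sF_s^{-1}$, the analysis of $N_{s,t}$ inside $\Hom^\bullet(F_{\uom_{s,t}},F_{\uom_{t,s}})$ to that of an analogous submodule in a simpler complex, with the dg-diagrammatic formalism then providing the explicit null-homotopy.
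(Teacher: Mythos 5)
Your treatment of parts (\textit{i}) and (\textit{ii}) coincides with the paper's. The problem is part (\textit{iii}), where your proposed contracting homotopy has a genuine gap. The paper's actual argument isolates a specific combinatorial fact that your proposal never uses: every subexpression of the reduced word $\underline{sts\dots}$ other than $11\dots1$ must contain a $U0$ (the first $0$ from the left is necessarily decorated $U$ by reducedness), and it is \emph{only at black $U0$ positions} that the differential acts ``diagonally'' on the double-leaves basis, exchanging the patched version of a basis element with the version carrying a boundary dot. Your matching rule --- sprout the rightmost patch into a dot --- does not respect this: a patch can sit at a $D0$ position, and there the light leaf with that position ``switched on'' is not a boundary dot but a cap/braid-move configuration, so $d$ applied to the patched diagram is not the partner basis element but a linear combination that must be re-expanded in the double-leaves basis (the usual upper-triangularity of light leaves). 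Consequently $dh+hd=\id$ fails on the nose for your pairing, and you have correctly identified this verification as the obstacle without resolving it. The paper resolves it via Lemma \ref{lem_N}: group basis elements into equivalence classes differing only at black $U0$'s, order the classes by the relation $\overset{d}{\succ}$, and filter $N_{s,t}$ accordingly; the cross terms you cannot control are exactly the ones pushed into lower filtration steps, and each subquotient is visibly $(R\overset{\id}{\rightarrow}R)^{\otimes n_i}$, hence contractible.

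Your fallback is also not viable: Gaussian elimination using $F_sF_s^{-1}\cong\un[0]$ has no purchase here, since $\uom_{s,t}$ and $\uom_{t,s}$ are positive braid words containing no cancelling pair $\sigma_s\sigma_s^{-1}$, so there is nothing to eliminate and no reduction to a ``simpler complex.'' To repair part (\textit{iii}) you need both ingredients of the paper's proof: the observation that every basis element of $N_{s,t}$ contains a black $U0$, and the filtration (rather than a global involution) that confines the differential's off-diagonal contributions.
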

		\begin{proof}
			\begin{enumerate}
				\item This is easy by the definition of the differential map: if a morphism factors through
					a shorter word then any morphism obtained after the sprouting of a dot or the 
					uprooting of a strand will still factor through that shorter word.
				\item\label{item_proofbraid} One can write a basis of double leaves maps for the whole morphism space such 
					that the only one which does not factor through shorter words is the $(s,t)$-ar. 
				\item By \ref{item_proofbraid} all the double leaves maps different from the $(s,t)$-ar
					are a basis of $N_{s,t}$. 
					This basis is clearly a union of strong path equivalence classes 
					(the class of the $(s,t)$-ar being trivial). Furthermore any enriched 
					subexpression different
					from 11\dots 1 of a reduced word (such as $\uw(s,t)$)
					has to contain at least one $U0$ or $U\emptyset$. In fact it is sufficient 
					to take the first entry which is not 1 from the 
					left: this is a 0 or a $\emptyset$ that has to be decorated with a 
					$U$ by reducedness. Furthermore, the bottom boundary is entirely black, 
					so we can conclude by Corollary \ref{cor_N} and Remark \ref{rmk_blackU0}.
%
			\end{enumerate}
		\end{proof}
		\begin{rmk}
			In the same way as in Remark \ref{rmk_sesBS}, one can prove that \eqref{eq_sesbraid} 
			is a short exact sequence of dg $R$-bimodules.
		\end{rmk}
		Now, taking cohomology at zero in the short exact sequence \eqref{eq_sesbraid} gives 
		an isomorphism:
		\[
			\Hom_{\Kb(\D)}(F_{\uom(s,t)}^\bullet, F_{\uom(t,s)}^\bullet)\overset{\sim}{\longrightarrow} R.
		\]
		This is given by taking the coefficient of the 
		$(s,t)$-ar in any closed representative in $\Cdg(\D)$. 
		Take $\gamma_{s,t}$ to be the morphism corresponding to $1\in R$.
		The following proves Proposition \ref{pro_braidRou} \ref{item_propbraid}.
		\begin{pro}\label{pro_gammast}
			The morphisms $\gamma_{s,t}$ and $\gamma_{t,s}$ are mutually inverse homotopy equivalences.
		\end{pro}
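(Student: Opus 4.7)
The plan is to apply exactly the same machinery used to prove Proposition \ref{pro_sesbraid}, but this time to the self-Hom spaces $\Hom^\bullet(F_{\uom_{s,t}}, F_{\uom_{s,t}})$ and $\Hom^\bullet(F_{\uom_{t,s}}, F_{\uom_{t,s}})$. I would then compare the two relevant compositions to the identities by projecting onto the quotient $R[0]$.

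First, I would establish an analog of Proposition \ref{pro_sesbraid} for self-Homs. Define $N'_{s,t}\subset \Hom^\bullet(F_{\uom_{s,t}},F_{\uom_{s,t}})$ as the span of homogeneous morphisms factoring through words with fewer than $m$ letters. Since $\uom_{s,t}$ is reduced, any subexpression different from $11\cdots 1$ contains a black $U0$, so Lemma \ref{lem_N} shows $N'_{s,t}$ is null-homotopic. By the double leaves basis description from Remark \ref{rmk_ondgdiagrams}\,\ref{item_rmkdgdiagdoubleaves}, the only basis element not lying in $N'_{s,t}$ is the quadruple $(11\cdots 1,11\cdots 1,11\cdots 1,11\cdots 1)$, which corresponds to $\id_{B_{\underline{sts\cdots}}}$. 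Hence there is a short exact sequence
\[
	0\to N'_{s,t}\to \Hom^\bullet(F_{\uom_{s,t}},F_{\uom_{s,t}})\overset{\pi'}{\to} R[0]\to 0,
\]
where $\pi'$ extracts the coefficient of $\id_{B_{\underline{sts\cdots}}}$ in the $(11\cdots 1,11\cdots 1)$-component. The analogous statement holds for $\uom_{t,s}$.

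Second, I compute the images under $\pi'$ of the two competing morphisms. Evidently $\pi'(\id_{F_{\uom_{s,t}}}) = 1$. For the composition $\gamma_{t,s}\circ \gamma_{s,t}$, the $(11\cdots 1,11\cdots 1)$-component is the vertical stacking of the $(s,t)$-ar (the $(11\cdots 1,11\cdots 1)$-component of $\gamma_{s,t}$, with coefficient $1$ by construction) and the $(t,s)$-ar (same for $\gamma_{t,s}$). In the double leaves basis of $\Hom_\D(B_{\underline{sts\cdots}}, B_{\underline{sts\cdots}})$ this stacked diagram decomposes as $\id_{B_{\underline{sts\cdots}}}$ plus a linear combination of double leaves factoring through shorter words. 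In particular $\pi'(\gamma_{t,s}\circ \gamma_{s,t}) = 1$, and since $N'_{s,t}$ is null-homotopic, $\gamma_{t,s}\circ \gamma_{s,t}$ and $\id_{F_{\uom_{s,t}}}$ coincide in $\Hom_{\Kb(\D)}$. An identical argument for $\uom_{t,s}$ gives $\gamma_{s,t}\circ \gamma_{t,s}=\id_{F_{\uom_{t,s}}}$ in $\Kb(\D)$.

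The main obstacle is the diagrammatic claim that two $2m$-valent vertices stacked give $\id_{B_{\underline{sts\cdots}}}$ modulo morphisms factoring through shorter words, with the identity appearing with coefficient $1$. This is a purely diagrammatic property of the Jones-Wenzl morphisms that enter the two-color relations \eqref{2mtriv}--\eqref{eq_2mdot}: applying those relations to the stacked $2m$-valent vertices reduces the composition to the identity plus a sum of contractions involving dots, each of which factors through a shorter word. For $m=2$ and $m=3$ this is an explicit verification (as is promised in the introduction), while for general $m$ it is a standard consequence of the Temperley-Lieb/Jones-Wenzl machinery underlying the $(s,t)$-ars.
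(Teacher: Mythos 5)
Your proof is correct and follows essentially the same route as the paper: both arguments rerun the Proposition \ref{pro_sesbraid} analysis on the self-Hom complex $\Hom^\bullet(F_{\uom_{s,t}},F_{\uom_{s,t}})$ to identify its homotopy category with $R$ detected by the coefficient of the identity, and both reduce to the fact that two stacked $2m$-valent vertices equal the identity modulo morphisms factoring through shorter words. The only difference is that the paper pins down that last diagrammatic input with a precise citation (\cite[Claim 7.1]{EW}, via the two-color relations \eqref{2mtriv} and \eqref{eq_2mdot}), where you assert it as standard Jones--Wenzl machinery.
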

		\begin{proof}
			Notice that combining relations \eqref{2mtriv} and \eqref{eq_2mdot}, we obtain the following 
			equality (the picture is for $m_{s,t}$ even):
			\[
				\begin{tikzpicture}[baseline=.8cm,scale=.6, transform shape]
					\draw (0,0) -- (3,0); \draw (0,3) -- (3,3);
					\draw[red] (.5,0)--(1.5,1) ..controls (2.5,1.5).. (1.5,2)--(.5,3);
					\draw[red] (2,0)--(1.5,1) ..controls (1,1.5).. (1.5,2)--(2,3);
					\draw[blue] (2.5,0)--(1.5,1) ..controls (.5,1.5).. (1.5,2)--(2.5,3);
					\draw[blue] (1,0)--(1.5,1) ..controls (2,1.5).. (1.5,2)--(1,3);
					\node at (1.5,.3) {...};\node at (1.5,2.7) {...};\node at (1.5,1.5) {...};
				\end{tikzpicture}
				=
				\begin{tikzpicture}[baseline=.8cm,scale=.6]
					\draw (0,0) -- (3,0); \draw (0,3) -- (3,3);
					\draw[red](.5,0)--(.5,3); \draw[red](.5,1.5)--(1.5,1.5);\draw[blue](1,0)--(1,3);
					\draw[red](2,0)--(2,3);\draw[blue](2.5,1.5)--(1.5,1.5); \draw[blue](2.5,0)--(2.5,3);
					\node at (1.5,.3) {...};\node at (1.5,2.7) {...};
					\node[draw,circle,fill=white] at (1.5,1.5) {$\JW$};
				\end{tikzpicture}			
			\]
			One can show that this is the identity,
			modulo morphisms that factor through shorter Coxeter words: see \cite[Claim 7.1]{EW}. 
			Now we can repeat the argument of Proposition \ref{pro_sesbraid} for $\Homb(F_{\uom(s,t)}^\bullet,F_{\uom(s,t)}^\bullet)$
			and obtain that, up to a polynomial, the only nonzero morphism in $\Kb(\D)$ is the identity.
			Then the compsition above has to be homotopy equivalent to the identity.
		\end{proof}
		Let us now consider two braid words $\uom_1$ and $\uom_2$, such that $\uom_2$ is obtained from 
		$\uom_1$ by applying one of the relations of $B_W$. By appropriately tensoring with identity to 
		the left and to the right the morphisms \eqref{eq_inversebraid} or those in Proposition \ref{pro_gammast},
		we get a homotopy equivalence $\gamma_{\uom_1}^{\uom_2}$ 
		corresponding to the relation used.
		
		If more generally $\uom_2$ is obtained from $\uom_1$ via a sequence of relations then one gets
		a homotopy equivalence $\gamma_{\uom_1}^{\uom_2}$ given by the corresponding composition of 
		morphisms as above, which, a priori, depends on the choice of relations used.
		\begin{rmk}
			The morphisms $\gamma_{\uom_1}^{\uom_2}$ above are, up to sign, 
			the canonical isomorphisms defined by Rouquier \cite[\S\,9.3]{Rou_cat}. In fact consider a 
			braid word $\uom$. We have a homotopy equivalence 
			$\Hom^\bullet (F_{\uom}^\bullet,F_{\uom}^\bullet)\simeq R[0]$ compatible 
			with the ring structure (because null-homotopic morphisms form a two-sided ideal). Hence any loop
			of relations transforming $\uom$ into itself gives a homotopy equivalence that corresponds
			to an invertible element of $R$. Now taking $\Bbbk=\ZZ$ we deduce that this element is $\pm 1$.
%

			Another argument for canonicity is given in \cite[\S\,4.2]{Elias}.
		\end{rmk}
		\subsection{Explicit homotopy equivalences}
		The above gives a recipe for an explicit diagrammatic description of $\gamma_{s,t}$.
		In type $A$ such a description can be found in a paper by Elias and Krasner \cite{EK}.
		
		Let $\mathbb{L}_{s,t}$ be a basis of double leaves for the space $N_{s,t}$ and take 
		$\mathbb{L}_{s,t}^0$ to be the subset of maps of cohomological degree 0.
		Notice that, as the bottom line is entirely black and 
		the top line is entirely white, the cohomological degree can be zero only if the number of patches on bottom is the same as on top
		(see Remark \ref{rmk_ondgdiagrams} (\textit{i})). One can also restrict further to those maps
		of polynomial degree 0. Let $\beta$ be the $(s,t)$-ar morphism.
		Then, by \S\,\ref{subs_braidrel}, we know that:
		\[
			\gamma_{s,t}=\beta+\sum_{L\in \mathbb{L}_{s,t}^0} a_L L,
		\]
		and that it is the unique morphism of complexes up to homotopy satisfying these properties. Then
		it is sufficient to find any choice of coefficients $a_L$ such that:
		\[
			d\big( \beta+\sum_{L\in \mathbb{L}_{s,t}^0} a_L L \big)=0.
		\]
		So this reduces to a linear algebra problem. Here are the cases $m=2,3$.
		\begin{exa}
			For $m_{s,t}=2$ we obtain:
			\begin{equation}
				\gamma_{s,t}=%
				\begin{tikzpicture}[x=.5cm,y=.4cm,baseline=.3cm]
					\draw[red] (1,0)--(2,2);\draw[blue] (2,0)--(1,2);
					\pic at (0,0) {twoframe};
				\end{tikzpicture}+
				\begin{tikzpicture}[x=.5cm,y=.4cm,baseline=.3cm]
					\def\scapat{.8}
					\draw[red] (1,0)--(2,2);
					\pic at (0,0) {twoframe};
					\pic[blue] at (2,0) {patch={\scapat}};
					\pic[blue] at (1,2) {patch={\scapat}};
				\end{tikzpicture}
				+
				\begin{tikzpicture}[x=.5cm,y=.4cm,baseline=.3cm]
					\def\scapat{.8}
					\draw[blue] (2,0)--(1,2);
					\pic at (0,0) {twoframe};
					\pic[red] at (1,0) {patch={\scapat}};
					\pic[red] at (2,2) {patch={\scapat}};
				\end{tikzpicture}%
				-
				\begin{tikzpicture}[x=.5cm,y=.4cm,baseline=.3cm]
					\def\scapat{.8}
					\pic at (0,0) {twoframe};
					\pic[red] at (1,0) {patch=\scapat};
					\pic[red] at (2,2) {patch=\scapat};
					\pic[blue] at (1,2) {patch=\scapat};
					\pic[blue] at (2,0) {patch=\scapat};
				\end{tikzpicture}
			\end{equation}
			For $m_{s,t}=3$, we get:
			\begin{multline}
				\gamma_{s,t}=
				\begin{tikzpicture}[x=.35cm,y=.4cm,baseline=.3cm]
					\draw[red] (2,0)--(2,1)--(1,2);\draw[red] (3,2)--(2,1);
					\draw[blue] (1,0)--(2,1)--(3,0);\draw[blue] (2,2)--(2,1);
					\pic at (0,0) {threeframe};
				\end{tikzpicture}			
				+
				\begin{tikzpicture}[x=.35cm,y=.4cm,baseline=.3cm]
					\draw[red] (1,2)--(2,0);
					\draw[blue] (2,2)--(3,0);
					\pic at (0,0) {threeframe};
					\pic[blue,scale=.8,transform shape] at (1,0) {patch={1}};
					\pic[red,scale=.8,transform shape] at (3,2) {patch={1}};
				\end{tikzpicture}			
				+
				\begin{tikzpicture}[x=.35cm,y=.4cm,baseline=.3cm]
					\draw[red] (3,2)--(2,0);
					\draw[blue] (2,2)--(1,0);
					\pic at (0,0) {threeframe};
					\pic[blue,scale=.8,transform shape] at (3,0) {patch={1}};
					\pic[red,scale=.8,transform shape] at (1,2) {patch={1}};
				\end{tikzpicture}			
				+
				\begin{tikzpicture}[x=.35cm,y=.4cm,baseline=.3cm]
					\draw[red] (1,2)--(1,1.2);\fill[red] (1,1.2) circle (1.5pt);
					\draw[blue] (2,2)--(2,1)--(1,0);\draw[blue] (2,1)--(3,0);
					\pic at (0,0) {threeframe};
					\pic[red,scale=.8,transform shape] at (2,0) {patch={1}};
					\pic[red,scale=.8,transform shape] at (3,2) {patch={1}};
				\end{tikzpicture}			
				+
				\begin{tikzpicture}[x=.35cm,y=.4cm,baseline=.3cm]
					\draw[red] (3,2)--(3,1.2);\fill[red] (3,1.2) circle (1.5pt);
					\draw[blue] (2,2)--(2,1)--(1,0);\draw[blue] (2,1)--(3,0);
					\pic at (0,0) {threeframe};
					\pic[red,scale=.8,transform shape] at (2,0) {patch={1}};
					\pic[red,scale=.8,transform shape] at (1,2) {patch={1}};
				\end{tikzpicture}+\\
				+a
				\begin{tikzpicture}[x=.35cm,y=.4cm,baseline=.3cm]
					\draw[blue] (1,0)--(1,0.8);\fill[blue] (1,0.8) circle (1.5pt);
					\draw[red] (2,0)--(2,1)--(1,2);\draw[red] (2,1)--(3,2);
					\pic at (0,0) {threeframe};
					\pic[blue,scale=.8,transform shape] at (2,2) {patch={1}};
					\pic[blue,scale=.8,transform shape] at (3,0) {patch={1}};
				\end{tikzpicture}			
				+(1-a)
				\begin{tikzpicture}[x=.35cm,y=.4cm,baseline=.3cm]
					\draw[blue] (3,0)--(3,0.8);\fill[blue] (3,0.8) circle (1.5pt);
					\draw[red] (2,0)--(2,1)--(1,2);\draw[red] (2,1)--(3,2);
					\pic at (0,0) {threeframe};
					\pic[blue,scale=.8,transform shape] at (2,2) {patch={1}};
					\pic[blue,scale=.8,transform shape] at (1,0) {patch={1}};
				\end{tikzpicture}				
				+
				\begin{tikzpicture}[x=.35cm,y=.4cm,baseline=.3cm]
					\draw[red] (1,2)..controls (1,1) and (3,1).. (3,2);
					\draw[blue] (1,0)..controls (1,1) and (3,1).. (3,0);
					\pic at (0,0) {threeframe};
					\pic[red,scale=.8,transform shape] at (2,0) {patch={1}};
					\pic[blue,scale=.8,transform shape] at (2,2) {patch={1}};
				\end{tikzpicture}			
				-
				\begin{tikzpicture}[x=.35cm,y=.4cm,baseline=.3cm]
					\draw[blue] (2,2)--(1,0);
					\pic at (0,0) {threeframe};
					\pic[blue,scale=.8,transform shape] at (3,0) {patch={1}};
					\pic[red,scale=.8,transform shape] at (2,0) {patch={1}};
					\pic[red,scale=.8,transform shape] at (1,2) {patch={1}};
					\pic[red,scale=.8,transform shape] at (3,2) {patch={1}};
				\end{tikzpicture}
				-
				\begin{tikzpicture}[x=.35cm,y=.4cm,baseline=.3cm]
					\draw[blue] (2,2)--(3,0);
					\pic at (0,0) {threeframe};
					\pic[blue,scale=.8,transform shape] at (1,0) {patch={1}};
					\pic[red,scale=.8,transform shape] at (2,0) {patch={1}};
					\pic[red,scale=.8,transform shape] at (1,2) {patch={1}};
					\pic[red,scale=.8,transform shape] at (3,2) {patch={1}};
				\end{tikzpicture}+\\
				-a
				\begin{tikzpicture}[x=.35cm,y=.4cm,baseline=.3cm]
					\draw[red] (1,2)--(2,0);
					\pic at (0,0) {threeframe};
					\pic[blue,scale=.8,transform shape] at (2,2) {patch={1}};
					\pic[red,scale=.8,transform shape] at (3,2) {patch={1}};
					\pic[blue,scale=.8,transform shape] at (1,0) {patch={1}};
					\pic[blue,scale=.8,transform shape] at (3,0) {patch={1}};
				\end{tikzpicture}			
				-(1-a)
				\begin{tikzpicture}[x=.35cm,y=.4cm,baseline=.3cm]
					\draw[red] (3,2)--(2,0);
					\pic at (0,0) {threeframe};
					\pic[blue,scale=.8,transform shape] at (2,2) {patch={1}};
					\pic[red,scale=.8,transform shape] at (1,2) {patch={1}};
					\pic[blue,scale=.8,transform shape] at (1,0) {patch={1}};
					\pic[blue,scale=.8,transform shape] at (3,0) {patch={1}};
				\end{tikzpicture}			
				+
				\begin{tikzpicture}[x=.35cm,y=.4cm,baseline=.3cm]
					\pic at (0,0) {threeframe};
					\pic[blue,scale=.8,transform shape] at (1,0) {patch={1}};
					\pic[blue,scale=.8,transform shape] at (3,0) {patch={1}};
					\pic[blue,scale=.8,transform shape] at (2,2) {patch={1}};
					\pic[red,scale=.8,transform shape] at (2,0) {patch={1}};
					\pic[red,scale=.8,transform shape] at (1,2) {patch={1}};
					\pic[red,scale=.8,transform shape] at (3,2) {patch={1}};
				\end{tikzpicture}
			\end{multline}
			where $a$ can be any value in $\kk$. Notice that:
			\[
				d\bigg(a
				\begin{tikzpicture}[x=.35cm,y=.4cm,baseline=.3cm]
					\draw[red] (2,0)--(2,1)--(1,2);\draw[red] (2,1)--(3,2);
					\pic at (0,0) {threeframe};
					\pic[blue,scale=.8,transform shape] at (2,2) {patch={1}};
					\pic[blue,scale=.8,transform shape] at (1,0) {patch={1}};
					\pic[blue,scale=.8,transform shape] at (3,0) {patch={1}};
				\end{tikzpicture}			
				\bigg)=
				a
				\begin{tikzpicture}[x=.35cm,y=.4cm,baseline=.3cm]
					\draw[blue] (1,0)--(1,0.8);\fill[blue] (1,0.8) circle (1.5pt);
					\draw[red] (2,0)--(2,1)--(1,2);\draw[red] (2,1)--(3,2);
					\pic at (0,0) {threeframe};
					\pic[blue,scale=.8,transform shape] at (2,2) {patch={1}};
					\pic[blue,scale=.8,transform shape] at (3,0) {patch={1}};
				\end{tikzpicture}			
				-a
				\begin{tikzpicture}[x=.35cm,y=.4cm,baseline=.3cm]
					\draw[blue] (3,0)--(3,0.8);\fill[blue] (3,0.8) circle (1.5pt);
					\draw[red] (2,0)--(2,1)--(1,2);\draw[red] (2,1)--(3,2);
					\pic at (0,0) {threeframe};
					\pic[blue,scale=.8,transform shape] at (2,2) {patch={1}};
					\pic[blue,scale=.8,transform shape] at (1,0) {patch={1}};
				\end{tikzpicture}				
				-a
				\begin{tikzpicture}[x=.35cm,y=.4cm,baseline=.3cm]
					\draw[red] (1,2)--(2,0);
					\pic at (0,0) {threeframe};
					\pic[blue,scale=.8,transform shape] at (2,2) {patch={1}};
					\pic[red,scale=.8,transform shape] at (3,2) {patch={1}};
					\pic[blue,scale=.8,transform shape] at (1,0) {patch={1}};
					\pic[blue,scale=.8,transform shape] at (3,0) {patch={1}};
				\end{tikzpicture}			
				+a
				\begin{tikzpicture}[x=.35cm,y=.4cm,baseline=.3cm]
					\draw[red] (3,2)--(2,0);
					\pic at (0,0) {threeframe};
					\pic[blue,scale=.8,transform shape] at (2,2) {patch={1}};
					\pic[red,scale=.8,transform shape] at (1,2) {patch={1}};
					\pic[blue,scale=.8,transform shape] at (1,0) {patch={1}};
					\pic[blue,scale=.8,transform shape] at (3,0) {patch={1}};
				\end{tikzpicture}			
			\]
			so the morphism above is actually unique up to homotopy.
		\end{exa}
			\subsection{Rouquier formula}\label{subs_roufor}
				As a byproduct of Corollary \ref{cor_N}, we can now prove the so-called Rouquier formula 
				(conjectured in \cite{Rou_der}, and proved in \cite{LibWil}, and in \cite{Maki}).
				\begin{cor}\label{cor_Roufor}
					Let $w,v\in W$ and let $\uw$ and $\uv$ be reduced words expressing them.
					Let $\uom$ be the positive word lift of $\uw$ and $\underline{\nu}$ 
					be the negative word lift for $\uv$. Then:
					\[
						\Hom^\bullet(F_{\uom}^\bullet,F_{\underline{\nu}}^\bullet)\simeq	\begin{cases}
																								R[0] & \text{if $w=v$,} \\
																								0 & \text{otherwise.}
																							\end{cases}
					\] 
				\end{cor}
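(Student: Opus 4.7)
The plan is to apply Lemma \ref{lem_N} to the double leaves basis of $\Homb(F_{\uom},F_{\underline{\nu}})$, chosen as in Remark \ref{rmk_ondgdiagrams}\ref{item_rmkdgdiagdoubleaves} and indexed by quadruples $(\boi,\boi',\boe,\boe')$. Since $\uom$ is a positive word lift and $\underline{\nu}$ is a negative word lift, the boundary convention of \S\,\ref{subs_heckcat_diagrams} makes both the entire bottom boundary and the entire top boundary black, so any $U0$ occurring in the decorations of $\boe$ or $\boe'$ is automatically a black $U0$ in the sense required by Lemma \ref{lem_N}.

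Next I would exploit reducedness: any subexpression $\boe$ of $\uw$ different from $11\cdots 1$ contains at least one $U0$. Indeed, at the leftmost $0$, say at position $i$, all previous symbols are $1$'s, so the Bruhat stroll has arrived at $s_1\cdots s_{i-1}$, which has length $i-1$ because $\uw$ is reduced; hence $s_i$ increases the length and the position is decorated $U0$. The analogous statement holds for $\uv$, and therefore also for the flipped upper light leaves.

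Let $N\subset \Homb(F_{\uom},F_{\underline{\nu}})$ be the span of those basis elements with $(\boe,\boe')\neq(11\cdots 1,11\cdots 1)$; by Lemma \ref{lem_N}, $N$ is null-homotopic. If $w\neq v$, the compatibility condition $\uw^{\boe}=\uv^{\boe'}$ rules out the pair $\boe=\boe'=11\cdots 1$ (which would express $w$ and $v$ respectively), so $N$ is the whole morphism space and $\Homb(F_{\uom},F_{\underline{\nu}})\simeq 0$. If instead $w=v$, exactly one basis element survives modulo $N$, namely the one with $\boe=\boe'=11\cdots 1$; since $\boi\geq\boe$ and $\boi'\geq\boe'$, this forces $\boi=\boi'=11\cdots 1$ and hence no patches at all, so the corresponding dg diagram has cohomological degree $0$, and because all light leaves steps are $U1$'s realized by braid moves (of polynomial degree $0$) the surviving generator has polynomial degree $0$ as well. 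This yields $\Homb(F_{\uom},F_{\underline{\nu}})\simeq R[0]$.

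The main subtlety is the bookkeeping for the upper leaves: one must check that, under the flip operation and the boundary colour convention of \S\,\ref{subs_heckcat_diagrams}, a $U0$ in the light leaves construction for $\uv$ really corresponds to a $U0$ on a black portion of the top boundary of the composite diagram, so that Lemma \ref{lem_N} applies symmetrically on the two boundaries. Beyond this verification, everything reduces to a direct application of Lemma \ref{lem_N} to a carefully chosen double leaves basis.
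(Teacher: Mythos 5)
Your proof is correct and follows essentially the same route as the paper: both reduce the statement to Lemma \ref{lem_N} applied to the (dg) double leaves basis, using reducedness of $\uw$ and $\uv$ to locate a $U0$ in any non-full subexpression and the positivity/negativity of the lifts to guarantee these $U0$'s sit on black boundary, leaving only the single full-full generator in the case $w=v$. Your treatment is marginally more careful than the paper's (you avoid the reduction to $\ell(w)>\ell(v)$ by noting that at least one of $\boe,\boe'$ must be non-full, and you explicitly check the degree of the surviving generator), but the decomposition and the key lemma are identical.
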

				\begin{proof}
					Suppose that $\ell(w)\ge\ell(v)$ (the other case being analogous).
					Let $N$ be the dg submodule of morphisms factoring through shorter words.
					Any enriched subexpression of $\uw$ corresponding to a shorter word has to contain 
					at least one 0 or $\emptyset$. The first such symbol from the left has to be decorated by $U$ by reducedness. 
					As we took the positive lift of $w$, this is also black.
					Then Corollary \ref{cor_N} and Remark \ref{rmk_blackU0} imply that $N$ is contractible.

					Now, if $w\neq v$ then $N$ is the whole morphism space. Instead, if $w=v$,
					the only dg light leaves map not factoring through
					shorter words is the one with subexpressions $U1\dots U1$ for both $w$ and $v$ 
					(which gives a morphism corresponding to a certain braid move from $w$ to $v$).
				\end{proof}
		\printbibliography
\end{document}